\numberwithin{equation}{subsection}
\newtheorem{theorem}[subsection]{Theorem}
\newtheorem{classification-theorem}[subsection]{Classification Theorem}
\newtheorem{decomposition-theorem}[subsection]{Decomposition Theorem}
\newtheorem{proposition-definition}[subsection]{Proposition-Definition}
\newtheorem{periodicity-conjecture}[subsection]{Periodicity Conjecture}
\newtheorem{lemma}[subsection]{Lemma}
\newtheorem{proposition}[subsection]{Proposition}
\newtheorem{definition}[subsection]{Definition}
\newtheorem{corollary}[subsection]{Corollary}
\newtheorem{assumption}[subsection]{Assumption}
\newtheorem{example}[subsection]{Example}
\newtheorem{remark}[subsection]{Remark}
\newcommand{\rad}{\operatorname{rad}\nolimits}
\newcommand{\res}{\operatorname{res}\nolimits}
\newcommand{\Hom}{\operatorname{Hom}\nolimits}
\newcommand{\cok}{\operatorname{cok}\nolimits}
\newcommand{\Ext}{\operatorname{Ext}\nolimits}
\renewcommand{\mod}{\operatorname{mod}\nolimits}
\newcommand{\rep}{\operatorname{rep}\nolimits}
\newcommand{\proj}{\operatorname{proj}\nolimits}
\newcommand{\ind}{\operatorname{ind}\nolimits}
\newcommand{\Z}{\operatorname{\mathbb{Z}}\nolimits}
\newcommand{\C}{\operatorname{\mathbb{C}}\nolimits}
\newcommand{\N}{\operatorname{\mathbb{N}}\nolimits}
\newcommand{\F}{\operatorname{\mathbb{F}}\nolimits}
\newcommand{\gdim}{\operatorname{\underline{dim}}\nolimits}
\newcommand{\gpr}{\operatorname{gpr}\nolimits}
\newcommand{\iso}{\stackrel{_\sim}{\rightarrow}}
\newcommand{\id}{\mathbf{1}}
\newcommand{\cc}{{\mathcal C}}
\newcommand{\cd}{{\mathcal D}}
\newcommand{\ce}{{\mathcal E}}
\newcommand{\cg}{{\mathcal G}}
\newcommand{\ch}{{\mathcal H}}
\newcommand{\cl}{{\mathcal L}}
\newcommand{\cm}{{\mathcal M}}
\newcommand{\cn}{{\mathcal N}}
\newcommand{\cp}{{\mathcal P}}
\newcommand{\cR}{{\mathcal R}}
\newcommand{\cS}{{\mathcal S}}
\renewcommand{\tilde}[1]{\widetilde{#1}}
\begin{document}
\title[Quiver varieties and Hall algebras]{Quiver varieties and Hall algebras}
\author{Sarah Scherotzke and Nicolo Sibilla}
\address{S.~S.~: University of Bonn, Mathematisches Institut, 
Endenicher Allee 60, 53115 Bonn, Germany}

\email{sarah@math.uni-bonn.de}

\keywords{Nakajima quiver varieties, Hall algebra}
\subjclass[2010]{13F60, 16G70, 18E30}

\begin{abstract}
In this paper we give a  geometric construction of the quantum group $U_t(\cg)$ using Nakajima categories, which were developed in \cite{Scherotzke}. Our methods allow us to establish a direct connection between the algebraic realization of the quantum group  as Hall algebra by Bridgeland \cite{Bridgeland13} and its geometric counterpart 
by Qin \cite{Qin14}.

\end{abstract}

\maketitle

\tableofcontents
\section{Introduction}

Let $Q$ be a Dynkin quiver of type $A, D, E$. 
Ringel showed in \cite{Ringel90} that the positive part of the quantum group $U_{t}(\cn^+)$ specialized at $t=\sqrt{q}$ can be realized as a the Ringel-Hall algebra to the abelian category $\F_q Q$--mod. An alternative geometric construction of $U_{t}(\cn^+)$ was given by Lusztig \cite{Lusztig90}, \cite{Lusztig91} who considers a convolution product of shifted perverse sheaves on the moduli spaces of representations of $\C Q$.  This approach provides us with a natural basis of $U_{t}(\cn^+)$ corresponding to perverse sheaves which is called the canonical basis or crystal basis \cite{Kashiwara91}.  Recently, Hernandez-Leclerc \cite{HernandezLeclerc11} gave a realization of the positive part of the quantum group using graded Nakajima quiver varieties.

In 2013, Bridgeland  \cite{Bridgeland13} realized $ U_{t}(\cg)$ as a quotient of the semi-derived Hall algebra of the exact category of $2$-cyclic complexes $Comp_{\Z/2}( \proj \F_q Q)$ of projective $\F_q Q$--modules. More generally, semi-derived  Hall algebras of Frobenius categories have been studied by Gorsky in \cite{Gorsky}. They are localizations of the classical Hall algebra with respect to projective-injective objects. 
As Ringel's Hall algebra appears naturally as a sub algebra of the semi-derived Hall algebra to $Comp_{\Z/2}( \proj \F_q Q)$, we can view Bridgeland's construction as a natural extension of Ringel's results.

Recently, Fan Qin constructed a geometric counterpart using the dual graded Grothendieck ring of constructible sheaves on cyclic affine Nakajima quiver varieties \cite{Qin14}. This Hall algebra construction is a natural extension of Hernandez and Leclerc's construction. In this paper we propose a new geometric construction of the quantum group.  Our techniques allow us to establish a direct comparison between the algebraic realization of the quantum group $U_t(\cg)$ by Bridgeland \cite{Bridgeland13} and its geometric counterpart proposed by Qin \cite{Qin14}. The main ingredient is the theory of Nakajima categories developed in \cite{KellerScherotzke13a} and \cite{Scherotzke}. 
 We proved in \cite{Scherotzke} that 
affine quiver varieties, usually defined as colimits of GIT quotients, can be described as moduli spaces of representations of the singular Nakajima category $\cS$. See also \cite{LeclercPlamondon12} and \cite{KellerScherotzke13a} for the analogous result in the graded case. 

We show that the exact Frobenius category $Comp_{\Z/2} (\proj kQ)$ used by Bridgeland can be realized as the category of Gorenstein projective modules of a singular Nakajima category $\cS$, denoted $\gpr \cS$. 
More precisely, we obtain the following result.

\begin{theorem} [Theorem \ref{nakajima category}]
\label{main2}
 There is a singular Nakajima category $\cS$ which yields an isomorphism $\psi: Comp_{\Z/2} (\proj kQ) \to \gpr \cS$ of exact Frobenius categories. The map $\psi$ establishes a bijection between the acyclic complexes and the projective $\cS$--modules. Furthermore, the moduli space $\rep(w, \cS)$ is isomorphic to $\cm_0(w)$, the cyclic affine quiver variety. 
 \end{theorem}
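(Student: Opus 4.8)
The plan is to construct the singular Nakajima category $\cS$ explicitly from the quiver $Q$, then build the isomorphism $\psi$ by matching the combinatorial/diagrammatic description of both sides. I would proceed as follows.

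\emph{Step 1: The category $\cS$ and its modules.} First I would recall the construction of the singular Nakajima category $\cS$ from \cite{Scherotzke}, which is built from the repetitive-type quiver $\Z Q$ (or rather the cyclic version $\Z/2\, Q$ for the $2$-cyclic situation) together with a frame. The key structural fact I would invoke is that $\cS$ is a Frobenius category whose projective-injective objects are parametrized by the framing data, and that $\gpr \cS$, the stable category of Gorenstein-projective $\cS$-modules, carries an induced exact Frobenius structure. On the other side, an object of $Comp_{\Z/2}(\proj kQ)$ is a pair $(P_0, P_1)$ of projective $kQ$-modules with maps $d\colon P_0 \to P_1$ and $d'\colon P_1 \to P_0$ satisfying $d'd = 0 = dd'$; since projective $kQ$-modules are sums of the $P_i = e_i kQ$, such a complex is the same as a representation of the doubled cyclic quiver (two copies of $Q_0$, arrows of $Q$ in each copy, plus the differential arrows) subject to the relations of $kQ$ and the relation $d'd=dd'=0$. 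The heart of Step 1 is to identify the presenting quiver-with-relations of $\cS$ with exactly this data.

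\emph{Step 2: The functor $\psi$ and its inverse.} Given the combinatorial match from Step 1, I would define $\psi$ on objects by sending a $2$-cyclic complex to the corresponding $\cS$-module, check that the image lands in $\gpr \cS$ (Gorenstein-projectivity should correspond to each $P_i$ being projective over $kQ$, which is automatic), and extend $\psi$ to morphisms functorially. The inverse is built the same way in reverse. To upgrade this equivalence to an \emph{isomorphism} of categories rather than an equivalence, I would be careful to set up the objects of $\cS$-mod so that they are literally the complexes on the nose. Then one verifies $\psi$ is exact in both directions—conflations on the complex side are the degreewise-split short exact sequences, and these must be shown to match the conflations in $\gpr \cS$.

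\emph{Step 3: Acyclic complexes $\leftrightarrow$ projective $\cS$-modules, and moduli spaces.} For the bijection between acyclic complexes and projective $\cS$-modules, I would use that an acyclic $2$-cyclic complex of projectives is precisely a sum of the ``elementary'' contractible complexes $P_i \xrightarrow{\id} P_i \xrightarrow{0} P_i$ (and its twist), and show $\psi$ carries these to the indecomposable projective $\cS$-modules; since both are additive, the bijection on isomorphism classes follows, and combined with $\psi$ being an isomorphism of categories this gives the claimed identification. Finally, for the moduli-space statement, I would invoke the main theorem of \cite{Scherotzke} identifying $\rep(w,\cS)$ with the cyclic affine quiver variety $\cm_0(w)$ directly—the point $w$ records the dimension vector/framing, and $\rep(w,\cS)$ is by construction the moduli of $\cS$-modules with that framing, which was shown there to be $\cm_0(w)$.

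\emph{Main obstacle.} The routine part is the quiver-with-relations bookkeeping; the real work is twofold. First, getting an honest \emph{isomorphism} (not just equivalence) of exact \emph{Frobenius} categories requires pinning down the Frobenius structures on both sides and checking projective-injectives correspond—this is where the ``acyclic $\leftrightarrow$ projective'' bijection really does its job and where sign/orientation conventions in the definition of $\cS$ must be handled with care. Second, and most delicate, is that $Comp_{\Z/2}(\proj kQ)$ involves genuinely \emph{$2$-periodic} (i.e. $\Z/2$-graded) complexes, so $\cS$ must be the \emph{cyclic} singular Nakajima category associated to the $2$-periodic orbit category of $\Z Q$ rather than $\Z Q$ itself; verifying that the Gorenstein-projective modules over this cyclic $\cS$ are exactly the $2$-cyclic complexes—and that no spurious objects appear—is the crux of the argument.
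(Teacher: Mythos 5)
Your proposal takes a genuinely different route from the paper, and in its present form it has a real gap at exactly the point you flag as ``the crux.'' The paper does not match quivers-with-relations at all: it shows that $Comp_{\Z/n}(\proj kQ)$ has Auslander--Reiten sequences (citing Chen--Deng), deduces via Lemma 3 of \cite{Ringel84} that it is \emph{standard} (equivalent to the mesh category of its AR quiver), observes that it is a Frobenius model of the orbit category $\cd_Q/\Sigma^n$, and then invokes Theorem 5.6 of \cite{Scherotzke}, which identifies any such standard Frobenius model with $\proj\cR$ for $F=\Sigma^n$ and a configuration $C$ read off from where the projective-injectives sit in the AR quiver. The configuration is then computed: the contractible complexes $P_i\xrightarrow{\id}P_i$ occur in the middle of the AR sequences starting at $\rad P_i\hookrightarrow P_i$, whose class in the stable category $\cd_Q/\Sigma^n$ is $\Sigma^jS_i$, giving $C=\{\Sigma^jS_i\}$. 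Note that you never determine $F$ or $C$ in your write-up; this identification is part of the content of the theorem.

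The gap in your approach is twofold. First, the assertion in Step~1 that a $2$-cyclic complex of projectives ``is the same as'' a representation of a doubled quiver with relations, and that this presentation coincides with that of $\cS$, is not a bookkeeping exercise: the morphism spaces $\cS(\sigma(c),\sigma(c'))$ are defined as path spaces modulo mesh relations in $\Z Q_C/F$, and identifying them with $\Hom$-spaces of contractible complexes (equivalently with $e_jkQe_i$ plus differentials satisfying $d^2=0$ and naturality) for a general Dynkin $Q$ requires a mesh-category computation that you do not perform --- in the paper this is exactly what standardness plus Theorem 5.6 of \cite{Scherotzke} delivers. Second, and more seriously, your Step~2 only checks the easy direction of Gorenstein projectivity: that complexes with projective components land in $\gpr\cS$. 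The hard direction is the converse --- a general $\cS$-module corresponds (in your picture) to a $2$-periodic complex of \emph{arbitrary} $kQ$-modules, and you must show that the Gorenstein projective ones are precisely those with projective components and that ``no spurious objects appear.'' You name this as the delicate point but offer no argument; the paper disposes of it by quoting Theorem 5.5/5.6 of \cite{Scherotzke} (every indecomposable of $\gpr\cS$ is $\res x^{\wedge}$ for $x\in\cR_0$), which is the structural input your sketch is missing. The remaining items --- acyclic complexes of projectives over a hereditary algebra split into contractibles, and the moduli identification $\rep(w,\cS)\cong\cm_0(w)$ quoted from \cite{Scherotzke} --- are handled correctly.
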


Bridgeland's result together with the above Theorem allows us to identify the generators of the quantum group with isoclasses of $\cS$--modules. Each isoclass  corresponds to a stratum of the Nakajima quiver variety. 
We consider the IC complexes on these strata (\emph{Bridgeland strata}).  
They give classes in the dual graded Grothendieck group of the cyclic affine quiver variety. One of our main results is that the subalgebra generated by these classes, denoted $\cS \ch_{\gpr \cS}$, provides a new geometric construction of the quantum group.

Our  construction is different from Qin's  \cite{Qin14}. Bridgeland strata 
do not coincide with the strata he considers, instead we show in Corollary \ref{transversal generator} that they are transversal to each other.  In Theorem \ref{iso} we prove that, after localization along the IC complexes on minimal strata, the algebra  considered by Qin and $\cS \ch_{\gpr \cS}$ are isomorphic. 

\begin{theorem}[Theorem \ref{iso}]
\label{main1}
The graded dual Grothendieck ring $K^{* gr}(\gpr \cS)$ admits a Hall algebra structure. Further its subalgebra $
\cS \ch_{\gpr \cS}$  
is isomorphic to Qin's algebra (after localization along the IC complexes on minimal strata). 
\end{theorem}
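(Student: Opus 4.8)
The plan is to (a) produce the Hall algebra multiplication on $K^{* gr}(\gpr \cS)$ by transporting Qin's geometric convolution along the isomorphism $\rep(w,\cS)\cong\cm_0(w)$ of Theorem \ref{main2}; (b) exhibit both $\cS\ch_{\gpr\cS}$ and Qin's algebra as subalgebras of a single localization $\hat{\ch}$ of $K^{* gr}(\gpr\cS)$, each identified with $U_t(\cg)$; and (c) use the transversality of the Bridgeland and Qin strata (Corollary \ref{transversal generator}) to show that these two subalgebras have the same image in $\hat{\ch}$. For (a): by Theorem \ref{main2} one has $\rep(w,\cS)\cong\cm_0(w)$ compatibly in $w$, and under this isomorphism the stratification of the moduli space by isomorphism type of $\cS$-module matches the Bridgeland stratification of $\bigsqcup_w \cm_0(w)$; hence the convolution product on the dual graded Grothendieck ring of constructible sheaves on $\bigsqcup_w\cm_0(w)$ constructed by Qin \cite{Qin14} transports to an associative, unital, graded product on $K^{* gr}(\gpr\cS)$, which is the asserted Hall algebra structure, and $\cS\ch_{\gpr\cS}$ is by definition the subalgebra generated by the classes of the IC complexes on the closures of the Bridgeland strata.

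For (b), let $\hat{\ch}$ denote the localization of $K^{* gr}(\gpr\cS)$ obtained by inverting the classes of the IC complexes on the minimal strata; by Theorem \ref{main2} these are precisely the Bridgeland strata attached, via $\psi$, to the acyclic complexes, equivalently to the projective $\cS$-modules, and in Bridgeland's presentation \cite{Bridgeland13} they supply the invertible generators $K_i^{\pm 1}$. Combining Bridgeland's theorem with Theorem \ref{main2} identifies the image of $\cS\ch_{\gpr\cS}$ in $\hat{\ch}$ with $U_t(\cg)$, the Chevalley generators $E_i, F_i, K_i^{\pm 1}$ being realized as IC complexes on suitable small Bridgeland strata, while Qin's theorem \cite{Qin14} identifies his algebra, again a subalgebra of $\hat{\ch}$ after the same localization, with $U_t(\cg)$ via his own generators.

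For (c): by Corollary \ref{transversal generator}, for every $w$ the Bridgeland strata in $\cm_0(w)$ meet the Qin strata transversally. Transversality of stratifications implies that the restriction of the IC complex of a Bridgeland stratum closure to a normal slice of a Qin stratum is, up to shift, the IC complex of the intersection of the two closures, and symmetrically. A slice-by-slice analysis along these lines shows that, modulo the invertible classes supported on the minimal strata, each IC generator of $\cS\ch_{\gpr\cS}$ equals an IC generator of Qin's algebra and conversely; hence the two subalgebras have the same image in $\hat{\ch}$, and since each localization is recovered from that common image, the two localizations are isomorphic, which proves the theorem.

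The main obstacle is step (c). What is delicate is not locating the Bridgeland and Qin strata and their closures --- that is the content of Corollary \ref{transversal generator} --- but controlling the perverse IC extensions under restriction to normal slices, and verifying that the discrepancy between a Bridgeland-stratum generator and the matching Qin-stratum generator lies \emph{entirely} in the directions being inverted, that is, in the minimal strata, equivalently the $K_i$-directions of Bridgeland's algebra. Knowing abstractly that both $\cS\ch_{\gpr\cS}$ and Qin's algebra are isomorphic to $U_t(\cg)$ does not suffice; the transversality computation is exactly what forces the two geometric presentations to agree on generators, and thereby produces a canonical isomorphism rather than merely an abstract one.
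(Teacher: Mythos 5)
Your overall strategy points in the right direction --- an algebra structure on $K^{gr *}(\gpr \cS)$ dual to a convolution coproduct, a localization at the classes on minimal strata, and a matching of generators --- but two of your three steps have genuine gaps, and step (b) inverts the logic of the actual argument. In step (a) the issue is not transporting Qin's convolution along $\rep(w,\cS)\cong\cm_0(w)$ (that identification is immediate), but restricting the convolution diagram from $\rep(w,\cS)$ to the Gorenstein-projective locus $\gpr_w(\cS)$. This requires knowing that $\gpr_w(\cS)$ is open in $\rep(w,\cS)$ (Lemma \ref{open}), that $\gpr\cS$ is closed under extensions so that the relevant square is a fibre product, and a smooth base change argument showing that $i^*$ commutes with $\Delta$ (Lemma \ref{compatibility}); only then does dualizing produce the algebra structure together with an injective algebra homomorphism $\phi: K^{gr *}(\gpr \cS)\to K^{gr *}(\mod \cS)$.

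More seriously, your step (b) asserts that Qin's algebra is a subalgebra of a localization of $K^{gr *}(\gpr \cS)$. A priori it is not: Qin's generators $E_i$ and $F_i$ are the dual classes $L(S_{\sigma S_i})$ and $L(S_{\sigma \Sigma S_i})$ of IC complexes on strata of \emph{simple} $\cS$-modules, and these simples are in general not Gorenstein projective, so their strata do not meet the open locus $\gpr_w(\cS)$ and their classes do not live in $K^{gr *}(\gpr \cS)$. The entire content of the theorem is to show that, after localization, these classes nevertheless lie in the image of $\phi$. The paper does this not by the slice-by-slice analysis of perverse extensions across transversal stratifications that you sketch in (c) --- which you yourself flag as the delicate point without supplying a mechanism --- but by concrete module-theoretic computations: Lemma \ref{cartan} produces a filtration of $\sigma(S_i)^{\wedge}$ by the indecomposables $M_{i_k}$, giving $L(v_{i_1},w_{i_1})\cdots L(v_{i_n},w_{i_n})=L(\sigma(S_i)^{\wedge})$ and hence identifying the two groups of invertible elements; and Lemma \ref{generators} produces a non-split exact sequence $0\to P\to \res S_i^{\wedge}\to S_{\sigma(\Sigma^{-1}S_i)}\to 0$ giving $L(S_{\sigma(\Sigma^{-1}S_i)})=a_i\,L(\res S_i^{\wedge})\,L(P)^{-1}$. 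Both steps rest on the leading-term product formula $L(v,w)L(v',w')=\sum_{v''\ge v+v'}a_{v''}L(v'',w+w')$ (from the transversal slice theorem and Proposition \ref{pi}) together with the vanishing of the higher strata, rather than on transversality of the Bridgeland and Qin strata as such. Without these explicit extension arguments your proposal cannot identify the generators, and, as you correctly note, knowing that both algebras are abstractly isomorphic to $U_t(\cg)$ does not close that gap.
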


We refer to Theorem \ref{iso} for the exact statement, which is stronger and requires setting up some notation. 
 In particular this isomophism preserves the generators of the quantum group $E_i$, $F_i$ and $H_i$ chosen by Bridgeland and Qin respectively. 

Further our result clarifies the role played by the localizations considered by Bridgeland and Qin. We show that the localizing sets they consider correspond to each other across the dictionary provided by Theorem  \ref{main2} and Theorem \ref{main1}.
Indeed, call a stratum minimal if it vanishes under the stratification functor constructed in \cite{Scherotzke} and \cite{KellerScherotzke13a}. Qin's construction involves localizing at the minimal strata. 
The minimal strata in the subvariety of Gorenstein projective $\cS$ modules correpond under $\phi$ 
to the projective-injective modules, whose isomorphism classes are localized in the semi-derived Hall algebra construction of Bridgeland.  

We also prove the existence of Hall polynomials for a class Nakajima categories. This implies that we can meaningfully consider  their generic Hall algebra (see also \cite{Ringel90a}, \cite{ChenDeng}). 
\begin{theorem}[Theorem \ref{hall}]
Under the assumption \ref{assumption}, the exact Frobenius category $\gpr \cS$ satisfies the Hall polynomial property. 
\end{theorem}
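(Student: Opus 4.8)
The plan is to use Theorem~\ref{main2} to translate the counting problem into the abelian category $\mod\cS$, to strip off the contribution of the projective--injective (equivalently, acyclic) objects using that $\gpr\cS$ is Frobenius, and then to reduce what remains to Ringel's existence theorem for Hall polynomials over the Dynkin path algebra $kQ$. Concretely, the Hall polynomial property asks, for each triple of isoclasses $(M,N,X)$, for a polynomial $g^{X}_{MN}(t)\in\Z[t]$ whose value at $q$ equals the number of $\cS$--submodules $N'\subseteq X$ with $N'\cong N$ and $X/N'\cong M$, over every finite field $\F_q$.

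First I would observe that, by Theorem~\ref{main2}, the functor $\psi: Comp_{\Z/2}(\proj kQ)\to\gpr\cS$ is an isomorphism of exact Frobenius categories and that $\gpr\cS$ is extension--closed in $\mod\cS$; hence a conflation in $\gpr\cS$ is precisely a short exact sequence of $\cS$--modules all of whose terms are Gorenstein projective, and the Hall number above is the number of $\F_q$--points of a constructible subvariety of the variety of $\cS$--submodules of $X$, cut out by equations with integer coefficients independent of the base field. Assumption~\ref{assumption} should ensure that the presentation of $\cS$ by a quiver with relations is of a combinatorial nature depending only on $Q$, and that $\gpr\cS$ is representation finite. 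Then, by the classification of indecomposable Gorenstein projective $\cS$--modules in \cite{Scherotzke} and \cite{KellerScherotzke13a}, the indecomposables split into two families parametrized uniformly in $q$: the projective ones, corresponding to the acyclic complexes $K_P$ with $P\in\ind\proj kQ$, and the non--projective ones, corresponding via folding of minimal projective resolutions to the indecomposable $kQ$--modules. In particular each of $M$, $N$, $X$ decomposes canonically, independently of $q$, as a \emph{module part} (a sum of folded modules) together with a \emph{projective part} (a sum of $K_P$'s).

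Next I would use that $\gpr\cS$ is Frobenius, so each $K_P$ is projective and injective and $\Ext^1_{\gpr\cS}(K_P,-)=0=\Ext^1_{\gpr\cS}(-,K_P)$. Combined with the standard behaviour of conflations with projective--injective terms (compare the treatment of semi--derived Hall algebras of Frobenius categories in \cite{Gorsky}) and with a Riedtmann--Peng type formula for the Hall numbers, this should reduce the count to: (i) the orders of the relevant automorphism groups and the dimension of $\Hom_{\gpr\cS}(M,N)$, which are manifestly polynomial in $q$ once one knows that the pertinent $\dim\Hom$ and $\dim\End$ are independent of $q$ (which follows from the uniform combinatorial structure); and (ii) the number of extensions of the module part of $M$ by the module part of $N$ with prescribed middle term. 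For (ii), the folding functor $M\mapsto C_M$ matches conflations between folded $kQ$--modules with a $\Z/2$--periodic version of the extension data over $kQ$, so that the corresponding varieties are obtained from the submodule and extension varieties over $kQ$ by a construction defined over $\Z$; since $Q$ is Dynkin, $kQ$ is representation directed and Ringel's theorem \cite{Ringel90} provides Hall polynomials over $kQ$, polynomiality of point counts being preserved by the integral folding construction. Assembling (i) and (ii) yields the desired $g^{X}_{MN}(t)\in\Z[t]$.

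The hard part will be the last two steps. I must verify that the passage from a conflation of Gorenstein projective $\cS$--modules to its projective--injective part and its module part is controlled precisely enough -- not merely up to constructible stratification -- for the resulting point counts to remain polynomial, and that the folding functor identifies the $\Ext$--groups and conflations on the non--projective part with the $kQ$--side data with exactly the multiplicities Ringel's formula predicts. Carrying this out at the level of varieties, using the explicit presentation of $\cS$ from \cite{Scherotzke}, is the main obstacle; one could also try to read off polynomiality directly from Bridgeland's formulas for the structure constants of the semi--derived Hall algebra \cite{Bridgeland13}, but those concern the localized algebra, whereas the Hall polynomial property is about the raw Hall numbers of $\gpr\cS$, so some version of the reduction above seems unavoidable.
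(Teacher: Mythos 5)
Your overall strategy -- reduce the Hall numbers of $\gpr\cS$ to Ringel's Hall polynomials for the Dynkin algebra $kQ$ -- is in the right spirit (it is essentially the Chen--Deng route that the paper also follows), but as written it has a genuine gap, and it is exactly at the place you flag as ``the hard part''. The step of stripping off the projective--injective part and matching the remaining conflations with $kQ$-data via folding is not a routine verification: the middle term $L$ of a conflation in $\gpr\cS$ can mix the projective part and the module part nontrivially, so the submodule count does not factor into a product of counts for the two parts, and no Riedtmann--Peng bookkeeping by itself fixes this. The paper's proof resolves this differently: it first notes $|\Hom(M,N)|=q^n$ with $n$ independent of $q$ (morphisms are paths modulo monomial mesh relations), so it suffices to count $\Ext^1(M,N)_L$; it treats the case where $M$ or $N$ is indecomposable by discarding projective--injective summands and using the orbit-category structure $\proj\cR=\proj\cR^{gr}/F$ together with representation-directedness of $\proj\cR^{gr}$ (Ringel's argument); and it handles decomposable $M$ by the Guo--Peng degeneration criterion ($\dim\Ext^1(M,M)\le\dim\Ext^1(A\oplus B,A\oplus B)$ with equality iff the sequence splits) exactly as in Chen--Deng. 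Your proposal contains no concrete replacement for this last inductive step.

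A second, related problem is that you misidentify the role of Assumption \ref{assumption}. It is not about the presentation of $\cS$ being combinatorial or about representation-finiteness (both hold regardless); it is precisely the statement that $\Ext^1_{\cd_Q}(x,F^jy)$ vanishes for all but at most one $j$, which is what makes $\Ext^1_{\cp}(x,y)\cong\bigoplus_j\Ext^1_{\cd_Q}(x,F^jy)$ collapse to a single summand and hence makes $\Ext^1(x^\wedge,N)_L\cong\sqcup_j\Ext^1_{\cd_Q}(x,\bigoplus y_i)_{F^jL}$ a finite disjoint union of sets already known to have polynomial cardinality. Without isolating this, your ``folding identifies the extension data'' claim is unsupported -- and indeed it fails for $F=\Sigma$ (1-cyclic complexes), where the assumption breaks down. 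Finally, note that the theorem is stated for any $F$ satisfying Assumption \ref{assumption} (e.g.\ $F=\Sigma\tau^{-1}$), whereas your reduction through $Comp_{\Z/2}(\proj kQ)$ only covers $F=\Sigma^n$.
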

Furthermore, we observe in Proposition \ref{pi} that the graded Grothendieck ring for generalized quiver varieties introduced in \cite{Scherotzke} yields a Hall coalgebra structure.

{\bf Aknowledgements:} We would like to thank Bernhard Keller for his constant encouragement. We are also grateful to Fan Qin, Mikhail Gorsky, Jan Schr{\"o}er, Markus Reineke and Olivier Schiffmann 
for useful conversations. Furthermore, we would like to thank David Hernandez for providing references.

\section{Quiver Varieties and Nakajima Categories}

\subsection{Notation}For later use, we introduce the following notations. Let $k$ be a field and let $\mod k$ be the category of finite-dimensional $k$-vector spaces. If $\cc$ is a $k$-linear category whose $\Hom$-spaces are finite-dimensional we denote 
$\mod(\cc)$ the category of finite-dimensional {\em right $\cc$-modules}, meaning $k$-linear functors 
$\cc^{op} \to \mod(k)$. We write $\cc(u,v)$ for the space of morphisms $\Hom_\cc(u,v)$ and
$D$ for the duality over the ground field $k$. 
For each object $x$ of $\cc$, we have the {\em indecomposable projective
module}
\[
x^{\wedge}=x^{\wedge}_\cc = \cc(?,x): \cc^{op} \to \mod k
\]
and the {\em indecomposable injective module}
\[
x^{\vee}= x^\vee_\cc = D (\cc(x,?)): \cc^{op} \to \mod k.
\]
We denote $\proj \cc$ the full additive subcategory of $\mod(\cc)$ spanned by indecomposable projective modules.  We denote by $\cc_0$ the set of objects of $\cc$ and mean by a dimension vector of $\cc$ a function $w: \cc_0 \to \N$ with
finite support. We define $\rep( w, \cc)$ to be the variety of $\cc$-modules $M$ such that $M(u) = k^{w(u)}$ for each
object $u$ in $\cc_0$.  

Throughout the paper we let $Q$ be a Dynkin quiver of type $A, D, E$ with set of vertices $Q_0$. We will denote by $S_i$, $P_i$ and $I_i$ respectively the simple, the indecomposable projective and the indecomposable injective $kQ$--modules associated with every $i \in Q_0$. We will denote by $\cd_Q$ the derived category of finite-dimensional $kQ$--modules, by $\tau$ its Auslander-Reiten translation and by $\Sigma$ the shift functor. 

\subsection{Nakajima categories}\label{Nakajima categories}

In this section we recall briefly the definition of regular and the singular Nakajima categories $\cR$ and $\cS$ as defined in \cite{Scherotzke}. 
Let $\Z Q$ be the repetition quiver of $Q$. As $Q$ is of Dynkin type, the Auslander-Reiten quiver of $\cd_Q$
coincides with $\Z Q$. 

Furthermore by  Happel's Theorems \cite{Happel87} and \cite{Happel88}, the mesh category $k(\Z Q)$ is equivalent to $\ind \cd_Q$, the category of indecomposable objects of $\cd_Q$. Using this isomorphism, we label once and for all 
the vertices of $\Z Q$ by the isomorphism classes of 
indecomposable objects of $\cd_Q$. 
The Nakajima category depends now on the choice of a triangulated automorphism $F: \cd_Q \to \cd_Q$ and an $F$-invariant subset $C$ of isomorphism classes of objects in  $\ind \cd_Q$, satisfying the following: 

\begin{assumption}
The orbit category $\cp:= \cd_Q/F$ is triangulated and the canonical projection $\cd_Q \to \cd_Q/F$ is a triangulated functor. 
Furthermore, we assume that for every $x \in \cd_Q$ there are objects $c,\ c'\in C$ such that ${\cd_Q}(x,c)\not =0$ and $\cd_Q (c',x) \not =0$. If $C$ has this property we call it a \emph{configuration}.
\end{assumption}
We consider the quiver $\Z Q_C$ which we obtain by adding to every vertex $c \in C$ an object $\sigma(c)$ together with an arrow $c\to \sigma(c)$ and an arrow $\sigma(c) \to \tau^{-1} c$.
The functor $F$ can then be lifted uniquely to an automorphism of $\Z Q_C$.
Next, we consider the quiver $\Z Q_C/F$ obtained by merging $F$-orbits into a single vertex. Note that by our assumptions, $\ind \cp$ is equivalent to the mesh category 
of $\Z Q/F$. 

\begin{example}\label{exa1}
Consider the quiver $Q=A_2:  2 \to 1$, the configuration given by $C=\{   \tau^i S_1 \text{ for all }i \in \Z  \}$,  and the functor $F=\Sigma^2 $. Then $\Z Q_C/F$ is the following finite quiver

\[  \xymatrix{ 
& P \ar[rd] & & \Sigma S_1 \ar[rd] & & \Sigma S_2  \ar[rd] &  \\
S_1 \ar[ur]  \ar[r] & \sigma(S_1) \ar[r] & S_2 \ar[r]  \ar[ru] & \sigma(S_2)  \ar[r] & \Sigma P \ar[ru] \ar[r] &\sigma(\Sigma P)  \ar[r] & \Sigma^2 S_1  \cong S_1 } 
\]
  
Here the configuration is minimal in the sense that there is no proper  subset 
of $C$ giving rise to a configuration. 
\end{example}
Note that every triangulated functor $\cd_Q \to \cd_Q$ which commutes with $F$ defines canonically an automorphism of the quiver $\Z Q/F$. In particular, the Auslander-Reiten translation $\tau$ 
and $\Sigma$ define automorphisms on $\Z Q/F$, which we denote by the same symbols.

The \emph{regular Nakajima category} $\cR$ associated to $F$ and $C$ is given by the $k$-linear category with objects the vertices of $\Z Q_C/F$. Morphisms $\cR(x,y)$ are $k$-linear combinations of paths from $x$ to $y$ modulo the subspace spanned by all elements $u r_z v$, where
$u$ and $v$ are paths and $r_z$ is the sum of all paths from $\tau(z)$ to $z$ where $z$ is a vertex of the subquiver $\Z Q/ F$. 
Note that the vertices of $\Z Q/ F$ corresponds to the isomorphism classes of objects in $\ind \cp$.

The {\em singular graded Nakajima category
$\cS$} is the full subcategory of $\cR$ whose objects are vertices $\sigma(c)$ for all $c \in C$. 

The graded Nakajima categories $\cR^{gr}$ and $\cS^{gr}$ are defined analogously, replacing $\Z Q_C/F$ by $\Z Q_C$.
We refer to \cite{KellerScherotzke13a} and \cite{LeclercPlamondon12} for more details on the graded case. 

\begin{assumption}
Throughout this paper, we will consider $\Hom$--finite Nakajima categories $\cR$. 
\end{assumption}

By definition, there is a restriction functor $\res: \mod \cR \to \mod \cS$, which is exact.

In this paper, we will focus on the category of Gorenstein projective $\cS$--modules, denoted $\gpr \cS$ (see \cite{Scherotzke}, \cite{Najera}). 
This is a full and exact subcategory of $\mod \cS$ which is Frobenius and closed under extensions. The class of projective-injective objects is given by the projective $\cS$--modules. 
This category is related to $\cR$ by the following result.

\begin{theorem}\label{equivalence}
The restriction functor induces an equivalence of exact Frobenius categories $\res: \proj \cR \to \gpr \cS$. 
\end{theorem}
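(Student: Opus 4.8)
The plan is to construct a mutually inverse pair of exact functors between $\proj\cR$ and $\gpr\cS$, with the restriction functor $\res$ going one way. First I would recall from \cite{Scherotzke} the explicit description of $\gpr\cS$: a finite-dimensional $\cS$-module $M$ is Gorenstein projective precisely when it admits a certain ``compatibility'' with the regular category, i.e. when it lies in the essential image of $\res$ restricted to $\proj\cR$. Concretely, each indecomposable projective $\cR$-module $x^{\wedge}_\cR = \cR(?,x)$ restricts to the $\cS$-module $\sigma(c)\mapsto \cR(\sigma(c),x)$, and the content of the theorem is that these restrictions are exactly the indecomposable Gorenstein projective $\cS$-modules, that $\res$ is fully faithful on $\proj\cR$, and that it is exact for the relevant exact structures.

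The key steps, in order, would be: (1) Show $\res$ sends $\proj\cR$ into $\gpr\cS$. For this I would use the mesh relations defining $\cR$ together with the defining property of a configuration (every $x\in\cd_Q$ admits $c,c'\in C$ with nonzero maps to/from $x$), which guarantees enough ``$\sigma(c)$-coordinates'' to detect the module; one checks the restriction of a projective $\cR$-module satisfies the Gorenstein projectivity criterion of \cite{Scherotzke}. (2) Show $\res|_{\proj\cR}$ is fully faithful. Since both sides are additive and $\res$ is additive, it suffices to compute $\Hom$ on indecomposable projectives: I would show the natural map $\cR(x,y)=\Hom_{\mod\cR}(x^{\wedge},y^{\wedge}) \to \Hom_{\mod\cS}(\res x^{\wedge},\res y^{\wedge})$ is bijective, again via the mesh relations and the configuration condition — the point is that a morphism of $\cS$-modules between restrictions extends uniquely over the ``big'' vertices of $\Z Q_C/F$ because those vertices are reached from the $\sigma(c)$'s through the mesh relations. (3) Show $\res|_{\proj\cR}$ is essentially surjective onto $\gpr\cS$, i.e. every indecomposable Gorenstein projective is a restriction of some $x^{\wedge}_\cR$ — this is the converse direction of the criterion in \cite{Scherotzke} and may be quoted directly. (4) Check exactness and that the Frobenius structures match: $\proj\cR$ is exact with split exact sequences? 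No — rather one transports the exact structure, and since $\res$ is exact as a functor $\mod\cR\to\mod\cS$ and preserves the classes of projective-injective objects (the projective $\cR$-modules map to the projective $=$ projective-injective $\cS$-modules inside $\gpr\cS$), the equivalence is one of exact Frobenius categories.

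I expect the main obstacle to be step (2), full faithfulness — specifically verifying that the map on $\Hom$-spaces is injective. Surjectivity of $\cR(x,y)\to\Hom_\cS(\res x^\wedge,\res y^\wedge)$ is a lifting/extension statement that follows fairly formally from the configuration hypothesis, but injectivity requires showing that a path in $\Z Q_C/F$ which becomes zero after precomposition/postcomposition with all paths landing in (or emanating from) $\sigma(c)$-vertices was already zero modulo the mesh relations. This is essentially a combinatorial statement about the mesh category of $\Z Q/F$ and the ``thickening'' $\Z Q_C/F$, and I would reduce it to the known structure of morphism spaces in the mesh category (equivalently, in $\ind\cd_Q$ via Happel) together with a careful bookkeeping of the extra arrows $c\to\sigma(c)\to\tau^{-1}c$. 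Once the $\Hom$-level statement is in hand, exactness and the Frobenius comparison are routine, since they only involve the already-established facts that $\res$ is exact and that it matches up the projective-injective objects on both sides.
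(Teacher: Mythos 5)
The paper itself gives no proof of this theorem: it is imported wholesale from \cite{Scherotzke} (essentially Theorem 5.5 there), so there is no in-paper argument to compare yours against. Judged on its own, your outline has the right overall shape (land in $\gpr\cS$, fully faithful, essentially surjective, match exact and Frobenius structures), but two points deserve comment. First, your steps (1) and (3) both appeal to ``the Gorenstein projectivity criterion of \cite{Scherotzke}'' and its converse; that criterion essentially \emph{is} the statement being proved, so as written the argument risks being circular unless you actually establish the criterion. Second, the step you identify as the main obstacle --- injectivity of $\cR(x,y)\to\Hom_\cS(\res x^\wedge,\res y^\wedge)$ --- is not handled in the literature by combinatorial bookkeeping of paths and mesh relations, which would be quite delicate; the efficient route is the adjunction $(\res,K_R)$ together with the computation $K_R\res y^\wedge\cong y^\wedge$ (this is exactly Lemma \ref{phi}(2) of the present paper, proved by a stability/$\Ext$-vanishing argument). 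One then gets
$\Hom_\cS(\res x^\wedge,\res y^\wedge)\cong\Hom_\cR(x^\wedge,K_R\res y^\wedge)\cong\Hom_\cR(x^\wedge,y^\wedge)$
formally, with no path-level analysis. Your hesitation in step (4) is well placed and correctly resolved: a genuine short exact sequence of projective $\cR$-modules would split, so the Frobenius exact structure on $\proj\cR$ must be the one transported along $\res$ (conflations are the sequences that become exact in $\mod\cS$), after which exactness of the equivalence and the matching of projective-injectives (the $\sigma(c)^\wedge$ on both sides) are immediate.
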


The category $\cS$ can be described as a path algebra $k Q_{\cS}$ modulo an admissible ideal $I$. We refer to Proposition 3.16 \cite{Scherotzke} for the description of the quiver $Q_{\cS}$ and the admissible ideal $I$.
\begin{example}
Let us choose $Q$, $C$ and $F$ as in the previous example \ref{exa1}. It is easy to see that $\cR$ is $\Hom$-finite.  In this example the quiver $Q_\cS$ is the following
 \[  \xymatrix{ 
\bullet \ar[rr]  &  & \bullet   \ar[ld] \\
&  \ar[lu]  \bullet & } 
\]  
and $I$ is the ideal generated by all paths of length two. 
As $\mod \cS$ is self-injective, the category $\gpr \cS$ is equal to $\mod \cS$ and in particular abelian. 
\end{example}

\subsection{Nakajima quiver varieties}

Quiver varieties were introduced by Nakajima \cite{Nakajima94} and \cite{Nakajima98}. 
In \cite{Scherotzke} we introduce generalized quiver varieties using Nakajima categories. Our approach via Nakajima categories also provides  alternative constructions of classical quiver varieties, and is essential to compare Bridgeland's and Qin's work  on the quantum group. 
We will give a short introduction to generalized quiver varieties here.  

We denote by $S_x$ the simple one-dimensional $\cR$--module associated to a vertex $x \in \cR_0$. 
Recall that an $\cR$-module $M$ is {\em stable} if $\Hom_\cR(S_x, M)$ vanishes 
and $M$ {\em costable} if we
have $\Hom_\cR(M, S_x)=0$ for all simple modules $S_x$ supported at $ x\in \cR_0-\cS_0$.
A module is {\em bistable} if it is both stable and costable. 

For a choice of two dimension vectors $w: \cS_0 \to \N$ and $v: \cR_0-\cS_0 \to \N$, we define 
the \emph{affine quiver variety} $$\cm_0^{reg}(w):= \rep(w, \cS)$$ to be the moduli space of representations of $\cS$ with dimension vector $w$
and the smooth quiver variety to be $$\cm(v,w):= \rep(v,w, \cR)^{stable}/\prod_{i \in \cR_0-\cS_0} Gl (v(i)), $$ the quotient of the moduli space of space of stable representations by the the general linear group acting via base change in all vertices of $\cR_0-\cS_0$. 

As shown in Theorem 3.1 \cite{Scherotzke} for the general case and in \cite{Nakajima94} for the classical case, the varieties $\cm(v,w)$ are smooth and quasi-projective. Furthermore, 
we have a proper surjective map 
$$\pi: \sqcup \cm(v,w) \to \cm_0(w)$$ inducing a stratification $$\cm_0(w)=\sqcup\cm_0^{reg}(v,w)$$ into finitely many non-empty 
smooth locally closed strata $\cm_0^{reg}(v,w).$
We show in Theorem 3.5 of \cite{Scherotzke} that for $F=\tau^n$ one recovers Nakajima's $n$-cyclic quiver varieties.

\subsection{Stratification functor} \label{stratification}

The restriction functor $$\res: \mod \cR \to \mod \cS$$ is 
a localization functor and admits a right and a left adjoint which we denote $K_R$ and $K_L$ respectively. 
We define the intermediate extension $$K_{LR}: \mod \cS \to \mod \cR$$ as the image of the canonical map 
$K_L \to K_R$ (see \cite{KellerScherotzke13b} for general properties).  

The next lemma establishes the connection of the statification of $\cm_0(w)$ with the intermediate extensions. 
\begin{lemma}\label{lemma:stable}
An $\cR$ module $M$ is bistable if $K_{LR} \res M \cong M$. Furthermore, we have $N\in \cm^{reg}_0(v,w)$ 
if and only if $\gdim K_{LR} N =(v,w)$. 
\end{lemma}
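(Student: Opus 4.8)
The plan is to prove both assertions of Lemma \ref{lemma:stable} by exploiting the characterization of the intermediate extension $K_{LR}$ as the image of the canonical map $K_L \to K_R$, together with the adjunction properties of $\res$, $K_L$, $K_R$.

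\textbf{First assertion.} I would start from the standard facts about the recollement-type situation: $\res$ is a localization functor with fully faithful adjoints $K_L$ and $K_R$, so $\res K_L \cong \id \cong \res K_R$, and for any $\cR$-module $M$ the unit $K_L \res M \to M$ and counit $M \to K_R \res M$ fit into a commutative diagram with $K_L \res M \to K_R \res M$ factoring through $M$. The kernel of $K_L \res M \to M$ and the cokernel of $M \to K_R \res M$ are supported on $\cR_0 - \cS_0$; more precisely, $M$ is stable precisely when $K_L \res M \to M$ is epi and costable precisely when $M \to K_R \res M$ is mono (this is the usual homological description of (co)stability for modules over such categories, which I would either cite from \cite{KellerScherotzke13b}/\cite{Scherotzke} or derive from the fact that the largest submodule of $M$ supported away from $\cS_0$ measures failure of costability, dually for stability). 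Granting this, if $K_{LR}\res M \cong M$ then $M$ is a quotient of $K_L \res M$ (since $K_{LR}\res M$ is by definition a quotient of $K_L\res M$) and a submodule of $K_R \res M$, hence $M$ is stable and costable, i.e.\ bistable. (I note the statement as printed is a one-directional implication, so this is all that is needed; the converse also holds and could be remarked.)

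\textbf{Second assertion.} Here I would unwind the definition of the strata $\cm_0^{reg}(v,w)$. By construction in \cite{Scherotzke}, $N \in \rep(w,\cS) = \cm_0^{reg}(w)$ lies in the stratum $\cm_0^{reg}(v,w)$ iff $N$ is the restriction of a bistable $\cR$-module $\tilde N$ with $\gdim \tilde N = (v,w)$, and such a bistable lift is unique up to isomorphism. The content is therefore that the bistable lift of $N$ is exactly $K_{LR} N$. Indeed $\res K_{LR} N \cong N$ because $\res K_L N \cong N \cong \res K_R N$ and $\res$ is exact, so it takes the image of $K_L N \to K_R N$ to the image of the identity $N \to N$, namely $N$; and $K_{LR} N$ is bistable by the first assertion (applied with $M = K_{LR} N$, since $K_{LR}\res(K_{LR}N) \cong K_{LR} N$ as $\res K_{LR} N \cong N$). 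By uniqueness of the bistable lift, $K_{LR} N \cong \tilde N$, and hence $\gdim K_{LR} N = (v,w)$ iff $N \in \cm_0^{reg}(v,w)$. The remaining point is that the dimension vector of $K_{LR}N$ at the vertices of $\cS_0$ is $w$, which is immediate from $\res K_{LR} N \cong N$ and $\gdim N = w$.

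\textbf{Main obstacle.} The routine calculations are harmless; the real work is pinning down the homological characterization of stability and costability in terms of the unit and counit of the adjunctions, and matching the definition of the strata $\cm_0^{reg}(v,w)$ used in \cite{Scherotzke} with the bistable-lift description. If that reference already phrases the stratification via bistable $\cR$-modules (as the surrounding text strongly suggests, cf.\ Theorem 3.1 of \cite{Scherotzke}), then the proof is essentially the diagram-chasing above; if instead the strata are defined via the map $\pi$ on GIT quotients, an extra step identifying the fibers of $\pi$ with isomorphism classes of bistable lifts — again presumably available from \cite{Scherotzke} — is required, and that identification is where I would expect to spend most of the care.
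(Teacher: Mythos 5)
The paper gives no proof of this lemma at all: it is stated as a recollection, with the relevant facts deferred to \cite{Scherotzke} and \cite{KellerScherotzke13b}, so there is no in-paper argument to compare against. Your reconstruction is essentially correct and is the standard one: $K_{LR}\res M$ is by definition a quotient of $K_L\res M$ and a submodule of $K_R\res M$, the modules $K_L N$ are costable (since $\Hom(K_L N, S_x)\cong\Hom(N,\res S_x)=0$) and the $K_R N$ are stable, and these properties pass to quotients and submodules respectively, so $M\cong K_{LR}\res M$ is bistable; and for the second assertion, $\res K_{LR}N\cong N$ by exactness of $\res$, so $K_{LR}N$ is the (unique) bistable lift of $N$, which matches the description of the strata in \cite{Scherotzke}. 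One small slip: you have the dictionary backwards. With the paper's conventions, stability (no simple subobject $S_x$, $x\in\cR_0-\cS_0$) is equivalent to the unit $M\to K_R\res M$ being mono, and costability (no simple quotient) to the counit $K_L\res M\to M$ being epi, not the other way around as you state. Since your argument uses both maps and concludes bistability, the conclusion is unaffected, but the stated "precisely when" characterizations should be swapped. The deferral of the identification of $\cm_0^{reg}(v,w)$ with the locus of modules admitting a bistable lift of dimension vector $(v,w)$ is legitimate, as that is exactly what the paper itself imports from Theorem 3.1 of \cite{Scherotzke}.
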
 
It will be useful to introduce a related functor $CK$ defined by $CK(M) :=\cok(K_{LR}(M) \to K_R(M))$ for all $M \in \mod \cS$. 
Now we have an obvious isomorphism
$\cR/ \langle \cS \rangle \iso \cp$ where $\langle \cS \rangle$ denotes
the ideal generated by the identical   morphisms of $\cS$. Therefore,
we may view $CK(M)$ as a $\cp$--module. 

We have shown in \cite{Scherotzke} that the image of $CK$ consists of projective modules and we obtain the following result. 
\begin{theorem}
The functor $CK: \mod \cS \to \proj \cp, M \mapsto CK(M)$ maps objects in the same stratum to isomorphic representable objects.
\end{theorem}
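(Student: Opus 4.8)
The plan is to reduce the statement, via the identifications underlying the stratification, to a constancy assertion for certain dimension vectors, and to settle that using Lemma~\ref{lemma:stable} together with the already-known fact (\cite{Scherotzke}) that $CK$ takes values in $\proj\cp$.

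\emph{Step 1: identifying $CK$.} Since $\res$ is exact with $\res\,K_{LR}\cong\id\cong\res\,K_R$, the canonical monomorphism $K_{LR}(M)\hookrightarrow K_R(M)$ becomes an isomorphism after $\res$; hence $\res\,CK(M)=0$, which re-proves that $CK(M)$ is a $\cp$-module, and shows that $K_{LR}(M)$ is precisely the $\cR$-submodule of $K_R(M)$ generated by the values of $K_R(M)$ at the vertices of $\cS$ (the module $K_L(M)$ is generated in those degrees, and the canonical map $K_L(M)\to K_R(M)$, whose image is $K_{LR}(M)$, restricts to the identity of $M$). Consequently $CK(M)\cong K_R(M)\otimes_\cR\cp$: the functor $CK$ is $K_R$ followed by base change along $\cR\twoheadrightarrow\cR/\langle\cS\rangle\cong\cp$. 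In particular $CK(M)$ depends only on $K_R(M)=K_R\res K_{LR}(M)$, hence only on $X:=K_{LR}(M)$, which is bistable; and by Lemma~\ref{lemma:stable} two modules lie in the same stratum precisely when their images under $K_{LR}$ have the same dimension vector. So the theorem reduces to the claim that, for a bistable $\cR$-module $X$, the $\cp$-module $\cok\big(X\hookrightarrow K_R\res X\big)$ depends up to isomorphism only on $\gdim X$.

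\emph{Step 2: reduction to dimension vectors.} Write $CK_X$ for this cokernel, so $\gdim CK_X=\gdim(K_R\res X)-\gdim X$ from $0\to X\to K_R\res X\to CK_X\to 0$. Since $CK_X$ is projective over $\cp$ (\cite{Scherotzke}), it is isomorphic to $\bigoplus_{z\in\cp_0}(z^\wedge_\cp)^{\oplus m_z}$, where $m_z$ is the multiplicity of $S_z$ in $CK_X/\rad CK_X$; and because $X$ is costable, $X/\rad X$ is supported on $\cS_0$, so applying $(-)/\rad(-)$ to the sequence identifies $CK_X/\rad CK_X$ with the part of $(K_R\res X)/\rad(K_R\res X)$ supported on $\cR_0\setminus\cS_0$. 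It therefore suffices to show that both $\gdim(K_R\res X)$ and this top depend only on $\gdim X$. Concretely, for the dimension vector this says that $\dim_k(K_R\res X)(z)=\dim_k\Hom_\cS(\res z^\wedge_\cR,\res X)$ is constant on each stratum, for every $z\in\cR_0\setminus\cS_0$; here $\res z^\wedge_\cR$ is Gorenstein projective over $\cS$ by Theorem~\ref{equivalence}. (By contrast $\gdim K_L(M)$ is a fixed linear function of $\gdim M$, $K_L$ being a Kan extension, so the analogue for $\ker(K_L(M)\to K_R(M))$ is immediate; a duality on $\cR$ interchanging $K_L$ and $K_R$ up to a twist would transport this to $K_R$.)

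\emph{Step 3: the obstacle.} The crux is this constancy of $\dim_k\Hom_\cS(\res z^\wedge_\cR,\res X)$ as $\res X$ runs over a stratum $\cm_0^{reg}(v,w)$. I would argue it geometrically. By \cite{Scherotzke} the stratum is a smooth, connected, locally closed subvariety, identified via $\pi$ with the locus of bistable representations inside $\cm(v,w)$, over which the modules $\res X$ move in a flat family of fixed dimension vector; the function $M\mapsto\dim_k\Hom_\cS(\res z^\wedge_\cR,M)$ is upper semicontinuous, while $\sum_i(-1)^i\dim_k\Ext^i_\cS(\res z^\wedge_\cR,M)$ is constant, so the desired constancy follows once one shows that $\Ext^{>0}_\cS(\res z^\wedge_\cR,M)$ vanishes for $M$ in the stratum — equivalently, that the extension $X\hookrightarrow K_R\res X$ carries no higher derived contribution at the vertices of $\cp$ — and the rank functions governing the top are treated the same way. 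Proving this vanishing is the hard part of the argument, and is exactly where the fine structure must be used: the presentation $\cR\cong kQ_\cS/I$ by the mesh relations, and the bistability and genericity that are built into the very definition of $\cm_0^{reg}(v,w)$ through $K_{LR}$. An alternative, purely homological route is to apply $\Hom_\cR(\cp,-)$ to the short exact sequence, use $\Hom_\cR(\cp,K_R\res X)=0=\Hom_\cR(\cp,X)$ to identify $CK_X$ with a subobject of $\Ext^1_\cR(\cp,X)$, compute this $\Ext$-group from a projective resolution of $\cp$ over $\cR$ dictated by the mesh relations, and read off from the bistability of $X$ that it depends only on $\gdim X$. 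Once the constancy is granted, $\gdim CK_X$ and the top of $CK_X$ depend only on $\gdim X$, hence so does the projective $\cp$-module $CK_X$; combined with Lemma~\ref{lemma:stable} this gives $CK(M)\cong CK(M')$ whenever $M$ and $M'$ lie in the same stratum, each value being an object of $\proj\cp$, as claimed.
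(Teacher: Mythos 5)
Your Steps 1 and 2 are a sound reduction: identifying $CK(M)$ with $K_R(M)\otimes_\cR\cp$, observing that it therefore depends only on $X=K_{LR}(M)$, and noting that a projective $\cp$--module is determined by its top, correctly concentrates the whole theorem into a single claim, namely that $\gdim K_R(M)$ (equivalently $\dim_k\Hom_\cS(\res z^{\wedge},M)$ for $z\in\cR_0\setminus\cS_0$, together with the top of $K_R(M)$ away from $\cS_0$) is constant on each stratum. Be aware, though, that the paper itself offers no proof to compare against: the statement is imported from \cite{Scherotzke}, and only the projectivity of the image of $CK$ is explicitly attributed there.

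The genuine gap is Step 3, which is exactly where all the content of the theorem sits, and which you explicitly leave open ("proving this vanishing is the hard part"). Worse, the first route you sketch does not work as stated: $\res z^{\wedge}$ for $z\in\cR_0\setminus\cS_0$ is Gorenstein projective but in general \emph{not} projective over $\cS$ (cf.\ Example \ref{exa2}, where $\gpr\cS\subsetneq\mod\cS$), hence has infinite projective dimension, so the groups $\Ext^i_\cS(\res z^{\wedge},M)$ need not vanish for $i\gg0$ and the alternating sum $\sum_i(-1)^i\dim\Ext^i_\cS(\res z^{\wedge},M)$ is not even defined, let alone constant on $\rep(w,\cS)$; and the vanishing $\Ext^{>0}_\cS(\res z^{\wedge},M)=0$ certainly fails for arbitrary $M$ and is not established for $M$ in a given stratum. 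The second, homological route is only named. What is actually needed --- and what the argument in \cite{Scherotzke} supplies --- is a concrete computation of $\gdim K_R(M)$ from the stratum data $(v,w)=\gdim K_{LR}(M)$, using the mesh exact sequences relating $\res(\tau z)^{\wedge}$, $\res z^{\wedge}$ and projective $\cS$--modules (the sequences of Lemmas 3.13--3.14 of \cite{Scherotzke} invoked elsewhere in this paper); without that input, your proof reduces the theorem to an equivalent unproved statement rather than proving it.
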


Hence we call $CK$ the \emph{stratification functor}. Furthermore, we call a stratum \emph{minimal} if $CK$ takes value zero on the stratum. We call a pair of strata  \emph{transversal}, if they have isomorphic images under $CK$. We refer to 3.3.2 of \cite{Nakajima01} for the geometric definition in the case of classical quiver varieties.

\section{Hall algebras of Nakajima Categories}
In his paper \cite{Bridgeland13}  Bridgeland shows that a quotient of the twisted semi-derived Hall algebra of the category of $2$-periodic complexes $Comp_{\Z/2}(\proj \F_q Q)$ is isomorphic to $U_{t}(\cg)$. We refer to section 2.1 of \cite{Bridgeland13} for the presentation of $U_t(\cg)$ used by Bridgeland.  

Recall that for an exact category $\ce$ the category of $\Z/n$-periodic complexes has objects the $n$-cyclic complexes   
$$
\xymatrix{
  &  P_i  \ar@/^1pc /[dr]^{d_i}  & \\
 P_{i-1} \ar@/^ 1pc/[ur]^ {d_{i-1}} & & P_{i+1} \ar@/^1pc /[dl]^{d_{i+1}} \\
  & \cdots \ar@/^1pc /[lu]^{d_{i-2}} & }
$$
  with $P_i \in \ce$, $d_i \in \ce(P_i , P_{i+1})$ such that  $d_i d_{i+1} =0$  for all $ i \in \Z/n$ and morphisms given by 
family of maps $f_i : P_i \to P_i'$ that commute with the differential $d_i$.

We show next that the exact categories $Comp_{\Z/2}(\proj \F_q Q)$ are in fact equivalent to a Nakajima category as introduced in Section \ref{Nakajima categories}. 

\begin{theorem}\label{nakajima category}
Let $k$ be any field. The category $Comp_{\Z/n}(\proj kQ)$ of n-cyclic complexes is equivalent to $\proj \cR$ for the regular Nakajima category $ \cR$ corresponding to:
\begin{itemize}
\item the functor $F=\Sigma^n$,  
\item the configuration $C$ given by the vertices labelled by $\Sigma^j  S_i$ for all $i \in Q_0$ and $j \in \Z$. 
\end{itemize}
\end{theorem}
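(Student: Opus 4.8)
The plan is to...

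The plan is to reduce the statement to its graded analogue and then pass to $F$-orbit categories. The graded Nakajima category $\cR^{gr}$ attached to $C$ is built from the quiver $\Z Q_C$, and $F=\Sigma^n$ acts freely on its vertices and arrows (no nonzero bounded complex is fixed by a nontrivial power of $\Sigma$, and likewise for the frozen vertices $\sigma(\Sigma^jS_i)$); comparing mesh relations upstairs and downstairs gives $\cR\simeq\cR^{gr}/F$. All categories in sight are $\Hom$-finite and Krull--Schmidt, so forming $\proj(-)$ commutes with the $F$-orbit construction, whence $\proj\cR\simeq(\proj\cR^{gr})/F$. Thus it suffices to (i) prove the graded equivalence $\proj\cR^{gr}\simeq Comp^{b}(\proj kQ)$, intertwining the shift on $\cR^{gr}$ with the shift of complexes, and (ii) identify $Comp^{b}(\proj kQ)/\Sigma^{n}$ with $Comp_{\Z/n}(\proj kQ)$. (Note that $C=\{\Sigma^jS_i\}$ is indeed a configuration: every indecomposable $x\in\cd_Q$ admits a nonzero map from, and to, some $\Sigma^jS_i$, coming from a composition factor of its socle and of its top.)

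\emph{The graded equivalence.} By Yoneda $\proj\cR^{gr}$ is the additive hull of $\cR^{gr}$, with indecomposables the vertices of $\Z Q_C$, namely $\ind\cd_Q$ together with the frozen vertices $\sigma(\Sigma^jS_i)$ ($i\in Q_0$, $j\in\Z$). On the other side every bounded complex of projective $kQ$-modules splits as a minimal complex plus a contractible one; by Happel's theorem the minimal indecomposables correspond to $\ind K^{b}(\proj kQ)=\ind\cd_Q$, compatibly with shift and with $\tau$, and the indecomposable contractibles are the complexes $[\,P_i\arr{1}P_i\,]$ in degrees $(j,j+1)$. Define $\Phi^{gr}$ by sending $x\in\ind\cd_Q$ to its minimal projective complex, $\sigma(\Sigma^jS_i)$ to $[\,P_i\arr{1}P_i\,]$ in degrees $(j,j+1)$, an arrow of $\Z Q$ to a chain map between the relevant minimal complexes lifting the corresponding irreducible morphism of $\cd_Q$, and the arrows $\Sigma^jS_i\ra\sigma(\Sigma^jS_i)\ra\tau^{-1}\Sigma^jS_i$ to the canonical chain maps factoring the corresponding mesh arrow through $[\,P_i\arr{1}P_i\,]$; using the mesh presentation of $\ind\cd_Q$ one can arrange these choices so that the mesh relations of $\cR^{gr}$ are sent to $0$, giving a well-defined functor. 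The crux is full faithfulness, $\cR^{gr}(x,y)\iso\Hom_{Comp^{b}}(\Phi^{gr}x,\Phi^{gr}y)$: on the target side $\Hom$ decomposes as the homotopy classes (which for regular $x,y$ are $\cd_Q(x,y)$, again by Happel) plus the null-homotopic maps, the latter being exactly those factoring through sums of the contractibles $[\,P_i\arr{1}P_i\,]$; on the source side $\cR^{gr}(x,y)$ decomposes as the paths lying in $\Z Q$ (which modulo the mesh ideal give $\cd_Q(x,y)$) plus the paths routed through some $\sigma$-vertex; and one matches the two decompositions. I expect this to be the main obstacle: it amounts to showing that a null-homotopy between minimal complexes factors through the contractibles in exactly the way prescribed by the combinatorics of $\Z Q_C$ and the admissible relations of \cite[Prop.~3.16]{Scherotzke} --- equivalently, that $\cR^{gr}$ is the full subcategory of $Comp^{b}(\proj kQ)$ on the chosen complexes. (If this graded statement is already recorded in \cite{KellerScherotzke13a}, it may simply be quoted.)

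\emph{Folding and the exact structure.} For (ii), send a bounded complex $X$ to the $n$-cyclic complex $\bar X$ whose term in position $\bar\jmath$ is $\bigoplus X_i$, the finite sum over all $i\in\Z$ congruent to $j$ modulo $n$; this kills $\Sigma^n$, and the elementary identity $\bigoplus_k\Hom_{Comp^{b}}(X,\Sigma^{nk}Y)\iso\Hom_{Comp_{\Z/n}}(\bar X,\bar Y)$ shows that it induces a fully faithful functor $Comp^{b}(\proj kQ)/\Sigma^{n}\to Comp_{\Z/n}(\proj kQ)$. It is essentially surjective because both sides are Krull--Schmidt and every indecomposable $n$-cyclic complex of projectives is, up to a direct summand, a fold of a bounded one: the folds of the $[\,P_i\arr{1}P_i\,]$ exhaust the indecomposable contractibles, and folding a minimal bounded complex lifting an object of $\cp$ yields the corresponding minimal periodic complex as a summand. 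Combining (i) and (ii) gives the equivalence $\proj\cR\simeq Comp_{\Z/n}(\proj kQ)$. Finally one checks that $\Phi$ is compatible with the natural Frobenius exact structures and matches the projective--injective objects: since $kQ$ is hereditary an $n$-cyclic complex of projectives is acyclic iff contractible iff a sum of the $[\,P_i\arr{1}P_i\,]$, and these correspond under $\Phi^{-1}$ to sums of the $\sigma(\Sigma^jS_i)^{\wedge}$, i.e.\ to the projective $\cR$-modules --- in accordance with Theorem \ref{equivalence}, which further identifies them with the projective--injectives of $\gpr\cS$.
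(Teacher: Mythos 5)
Your route is genuinely different from the paper's. You construct the equivalence by hand in two stages: a graded equivalence $\proj\cR^{gr}\simeq Comp^{b}(\proj kQ)$ sending $x\in\ind\cd_Q$ to its minimal complex of projectives and $\sigma(\Sigma^jS_i)$ to the contractible $P_i\arr{\id}P_i$, followed by a folding step identifying the $\Sigma^{n}$-orbit category of $Comp^{b}(\proj kQ)$ with $Comp_{\Z/n}(\proj kQ)$. The paper argues abstractly instead: by \cite{ChenDeng} the category $Comp_{\Z/n}(\proj kQ)$ has Auslander--Reiten sequences and satisfies the hypotheses of Lemma 3 of \cite{Ringel84}, hence is \emph{standard}, i.e.\ equivalent to the mesh category of its AR quiver; Theorem 5.6 of \cite{Scherotzke} then identifies any standard Frobenius model of $\cd_Q/\Sigma^{n}$ with $\proj\cR$, and the only remaining computation is to read off the configuration by locating the projective--injectives in the AR quiver (they sit in the middle of the AR sequences starting at the shifts of $\rad P_i\hookrightarrow P_i$, whose stable classes are the $\Sigma^{j}S_i$ --- the same identification you make). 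What the paper's approach buys is that the point you yourself flag as the crux --- full faithfulness of $\Phi^{gr}$, i.e.\ that null-homotopies between minimal complexes factor through the contractibles exactly as the mesh relations of $\Z Q_C$ prescribe --- is absorbed into the standardness statement and never has to be checked directly. What your approach buys is an explicit functor and the concrete dictionary between objects, which the paper only records after the proof.

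That said, your write-up does leave a genuine gap at exactly that crux: the full faithfulness of $\Phi^{gr}$ is described but not proved, and you hedge by hoping it is recorded in \cite{KellerScherotzke13a}. This is not a peripheral detail; comparing the two decompositions of $\Hom$-spaces (homotopy classes plus null-homotopic maps on one side, mesh-category morphisms plus paths through $\sigma$-vertices on the other) is where all the work of the theorem lives, and it is equivalent to the standardness that the paper imports from \cite{ChenDeng} and \cite{Ringel84}. A second, smaller gap is the essential surjectivity of the folding functor: the claim that every indecomposable $n$-cyclic complex of projectives is a summand of a fold of a bounded one is essentially the statement that $Comp_{\Z/n}(\proj kQ)$ is a Frobenius model of $\cd_Q/\Sigma^{n}$, which the paper also takes as known but which you should cite rather than assert. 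If you either prove the full-faithfulness step or replace stage (i) by the standardness argument, your proof closes up and agrees with the paper's conclusion, including the matching of acyclic complexes with the projective $\cR$-modules via Theorem \ref{equivalence}.
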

\begin{proof}
As shown in \cite{ChenDeng}, the exact category $Comp_{\Z/n}(\proj kQ)$ admits Auslander-Reiten sequences and using Proposition 2.5 in \cite{ChenDeng},
we see that the conditions of Lemma 3 in \cite{Ringel84} are satisfied. It follows that $Comp_{\Z/n}(\proj kQ)$ is a standard category, that is $Comp_{\Z/n}(\proj kQ)$
is equivalent to the mesh category of its Auslander-Reiten quiver. Furthermore, $Comp_{\Z/n}(\proj kQ)$ is a Frobenius model for  the orbit category 
$\cd_Q/\Sigma^n$ meaning $Comp_{\Z/n}(\proj kQ)$ is a Frobenius category with projective-injective objects the acyclic complexes and its stable category is equivalent to 
$\cd_Q/\Sigma^n$. As $Comp_{\Z/n}(\proj kQ)$  is standard, we know by Theorem 5.6 of \cite{Scherotzke}, that the Frobenius category $Comp_{\Z/n}(\proj kQ)$ is equivalent to $\proj \cR$ for the functor $F=\Sigma^n$. 
By Theorem 5.6 \cite{Scherotzke}, every $c\in C$ corresponds to the position 
of a complex such that a projective-injective object appears as a direct summand in the middle term of its Auslander-Reiten sequence. 
The projective-injective modules in $Comp_{\Z/n}(\proj kQ) $ are given by shifts of the $n$-cyclic complexes 
$$\cdots \to  0 \to P_i \stackrel{\id}  \to P_i \to 0  \to \cdots $$ for all $i$ in $ Q_0$. These appear in the middle term of the Auslander-Reiten sequences starting in shifts of complexes $$\cdots \to  0 \to \rad P_i \to P_i \to 0 \to \cdots.$$ In the stable category $\underline{Comp}_{\Z/n}(\proj kQ) $, the $j$--shift of the complex $$ \cdots \to  0 \to \rad P_i \to P_i \to 0 \to \cdots $$ corresponds under 
the isomorphism $\underline{Comp}_{\Z/n}(\proj kQ) \cong \cd_Q/ \Sigma^n $ to its homology which is $\Sigma^j S_i$. 
\end{proof}
Let us denote by $\psi: Comp_{\Z/n}(\proj kQ) \to \proj \cR \to  \gpr \cS$ the composition of isomorphisms of exact Frobenius categories. Under this isomorphism, we have

\begin{align*}
&\psi( \rad P_i \hookrightarrow P_i)  \mapsto \res   \Sigma S_i^{\wedge},\\
&\psi( P_i  \hookleftarrow \rad P_i)   \mapsto \res  S_i^{\wedge},\\
& \psi( P_i \stackrel{\id} \rightarrow P_i)  \mapsto \sigma( \Sigma S_i)^{\wedge},\\
& \psi( P_i \stackrel{\id} \leftarrow P_i)  \mapsto \sigma( S_i)^{\wedge}. 
\end{align*}

\begin{example}\label{exa2}
We consider the Nakajima category for $Q=A_2: 1 \to 2 $, $F=\Sigma^2$ and $C=\{ \Sigma ^i S_1, \Sigma^iS_2, \text{ for all } i \in Z \}$. 

\[  \xymatrix{ 
& P \ar[rd] & & \Sigma S_1 \ar[rd] \ar[r] & \sigma(\Sigma S_1)  \ar[r] & \Sigma S_2 \ar[r]  \ar[rd] & \sigma(\Sigma S_2)  \ar[r] & \Sigma^2 P \cong P \\
S_1 \ar[ur]  \ar[r] & \sigma(S_1)  \ar[r] & S_2 \ar[r]  \ar[ru] & \sigma(S_2)  \ar[r] & \Sigma P \ar[ru] && \Sigma^2 S_1  \cong S_1 \ar[ru] & } 
\]
Here the configuration is minimal in the sense that there is no proper  subset 
of $C$ yielding a configuration. Note that the configuration gives rise to $\Hom$-finite Nakajima categories $\cS$ and $\cR$. As an algebra $\cS$ is given by the path algebra of the following quiver, $Q_\cS$ 
 \[  \xymatrix{ 
\sigma(\Sigma S_1) \ar[r]^{a} \ar@/^/[d]^{\alpha'} &  \sigma(\Sigma S_2)   \ar@/^/[d]^{\beta} \\
\sigma(S_1) \ar[u]^{\alpha}  \ar[r]^b & \sigma(S_2) \ar[u]^{\beta'} } 
\]  subject to the relations 
$\langle \alpha \alpha', \  \alpha' \alpha,\  \beta \beta', \ \beta' \beta,\   \alpha a-b \beta' , \ \alpha' b-a \beta \rangle $ (recall that in this example $\cS$ is a subcategory of $\cR$ with four objects: in the diagram above the vertices of $Q_\cS$ are denoted like the corresponding objects of $\cS$).   
Clearly $\mod \cS$ is not self-injective and therefore $\gpr \cS$ is a full subcategory of $\mod \cS$. 
By Theorems \ref{nakajima category} and \ref{equivalence}, the Frobenius category $\gpr \cS$ is equivalent to the category $Comp_{\Z/ 2}( \proj k Q)$ 
considered by Bridgeland. 
\end{example}
We consider the Hall algebra associated to $\proj \cR$, where $k=\F_q$ the finite field $q$ elements and $q$ a prime power. 
The morphism space $${\proj \cR}(x^{\wedge}, y^{\wedge}) \cong {\cR}(x,y)$$ is given by all 
$\F_q$-linear combinations of paths from $x$ to $y$ 
in the quiver of $ \Z Q_C/F$ modulo the mesh relations which are monomial. 
Hence we have that, for all $M, N \in \proj \cR$, the dimension of $\Hom(M, N)$ does not depend on $q$. 
Thus $|\Hom(M, N)|$ is given 
by $q^n$, where $n$ is the dimension of $\Hom(M, N)$.  

\begin{assumption}\label{assumption}
We assume that $F$ satisfies the following property: Let $M$ and $N$ be indecomposable objects in $\cd_Q$, then
$ \Ext^1(M, F^iN) $ vanishes for all but at most one $i \in \Z$. 
\end{assumption} 
One easily verifies that the above condition on $F$ is satisfied for $F=\Sigma^n$ for all $ n>1$, but fails for $n=1$. 

\begin{lemma}
The above assumption holds for $F= \Sigma \tau^{-1}$ and any field $k$. 
\end{lemma}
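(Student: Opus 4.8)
The plan is to verify the condition of Assumption \ref{assumption} directly for the automorphism $F = \Sigma\tau^{-1}$ acting on $\cd_Q$, namely: for indecomposables $M, N \in \ind\cd_Q$, the group $\Ext^1_{\cd_Q}(M, F^iN)$ is nonzero for at most one $i \in \Z$. First I would rewrite everything in terms of the shift: since $\cd_Q$ is hereditary, $\Ext^1(M, X) = \Hom_{\cd_Q}(M, \Sigma X)$, so the condition becomes that $\Hom_{\cd_Q}(M, \Sigma F^iN) = \Hom_{\cd_Q}(M, \Sigma^{i+1}\tau^{-i}N)$ is nonzero for at most one $i$. Because every indecomposable of $\cd_Q$ is a shift $\Sigma^m X$ of an indecomposable $kQ$-module $X$, and shifts of $kQ$-modules live in distinct ``layers'' of $\cd_Q$ with $\Hom(\Sigma^a U, \Sigma^b V) = 0$ unless $b - a \in \{0,1\}$ (using that $kQ$ is hereditary and Dynkin), I can reduce to a bounded range of indices $i$.

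Concretely, write $M = \Sigma^p A$ and $N = \Sigma^q B$ with $A, B \in \ind(kQ\text{-mod})$. Then $\Hom_{\cd_Q}(M, \Sigma^{i+1}\tau^{-i}N) = \Hom_{\cd_Q}(A, \Sigma^{q - p + i + 1}\tau^{-i}B)$. Now $\tau^{-i}B$ need not be a module, but using the description of the AR-translate on $\cd_Q$ (where $\tau = \Sigma \nu^{-1}$ for the Serre functor $\nu$, or equivalently the combinatorics of $\Z Q$), one sees that $\tau^{-i}B$ is of the form $\Sigma^{s(i)} B_i$ for a module $B_i$ and an integer $s(i)$ that is monotone in $i$ — it increases by $0$ or $1$ as $i$ increases by $1$, and tends to $\pm\infty$. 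Hence the total shift $q - p + i + 1 + s(i)$ appearing in the Hom is a nondecreasing function of $i$ that is strictly increasing outside a bounded set, so it lands in the window $\{0, 1\}$ (where the Hom can be nonzero) only for finitely many consecutive $i$. The real point is to rule out two such values, so I would focus on the two boundary cases and show that if both $\Hom(A, \Sigma^0 B_i)$-type and $\Hom(A, \Sigma^1 B_{i'})$-type terms were nonzero for the relevant $i \ne i'$, one would get a contradiction with the structure of the root system / the mesh relations — e.g.\ via dimension-vector or slope considerations in the hereditary category, or by noting that $F = \Sigma\tau^{-1}$ moves objects ``one step'' in a way incompatible with two distinct nonzero Ext-groups.

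An alternative and perhaps cleaner route: identify $F = \Sigma\tau^{-1}$ with the Coxeter-type transformation and use that its action on the AR-quiver $\Z Q$ of $\cd_Q$ is the horizontal translation by one mesh-column combined with a shift, so that the $F$-orbit of any vertex $N$ meets any ``Ext-cone'' $\{X : \Ext^1(M, X) \ne 0\}$ of a fixed $M$ in at most one point. I would make this precise using the standard fact (Happel) that $\Hom$ and $\Ext^1$ between indecomposables of $\cd_Q$ are computed by paths in $\Z Q$ modulo mesh relations, so $\Ext^1(M, -)$ is supported on a ``triangle-shaped'' finite region translated off to infinity, and the bi-infinite $F$-orbit crosses it once. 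The main obstacle I anticipate is handling the $D$ and $E$ cases uniformly — in type $A$ the monotonicity and the ``one crossing'' claim are transparent, but for $D, E$ one must be careful that the AR-translate combined with the shift really does produce a strictly monotone displacement with no ``doubling back,'' which may require invoking that $F = \Sigma\tau^{-1}$ is fixed-point-free on $\ind\cd_Q$ and tracking the Coxeter number $h$; checking the boundary indices carefully is where the argument has to be tight.
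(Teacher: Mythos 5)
Your overall strategy --- tracking how the composite shift $\Sigma^{i+1}\tau^{-i}$ moves $N$ through the ``layers'' $\Sigma^m(\mod kQ)$ of $\cd_Q$ --- is sound, but the proof has a genuine gap exactly where you flag it. Writing $\tau^{-i}B=\Sigma^{s(i)}B_i$ with $s(i+1)-s(i)\in\{0,1\}$, your total shift $T(i)=q-p+i+1+s(i)$ increases by $1$ or $2$ at each step, so it can perfectly well take the value $0$ at some index $i$ and the value $1$ at $i+1$. You must therefore exclude that $\Hom_{\cd_Q}(A,B_i)$ and $\Ext^1_{\cd_Q}(A,\tau^{-1}B_i)$ are simultaneously nonzero, and for this you offer only a menu of possible strategies (``dimension-vector or slope considerations'', ``mesh relations''), none of which is carried out; your ``alternative route'' (the $F$-orbit meets each Ext-cone at most once) is a restatement of the claim rather than an argument. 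This boundary case can in fact be handled --- e.g.\ by Serre duality $\Ext^1(A,\tau^{-1}B_i)\cong D\Hom(B_i,\tau^2A)$ together with directedness of $\mod kQ$, treating separately the cases where $\tau^2 A$ leaves the module category --- but as written the key step is missing.

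The paper avoids the boundary case altogether by a normalization you do not make: since every indecomposable of $\cd_Q$ is $\tau^{-a}P_j$ for a projective $kQ$-module $P_j$, and $\tau$ is an autoequivalence commuting with $F$, one may assume $M$ itself is projective, and (after replacing $N$ by some $F^{i_0}N$) that $\cd_Q(M,N)\neq 0$, which forces $N$ to be a module concentrated in degree $0$. For projective $M$ one has $\Hom(M,\Sigma^k X)=\Ext^k(M,X)=0$ for every module $X$ and every $k\neq 0$, so your window $\{0,1\}$ collapses to $\{0\}$; since $F^iN=\tau^{-i}\Sigma^iN$ is a strictly positive shift of a module for $i\geq 1$ and a strictly negative one for $i\leq -1$, all the remaining Hom-spaces vanish at once. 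I would recommend restructuring your argument around this reduction rather than trying to patch the two-index case directly.
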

\begin{proof}
Let $M$ and $ N$ be indecomposable objects in $\cd_Q$. We assume without loss of generality that $M $ is a projective $kQ$--module and that $\cd_Q(M, N)$ does not vanish. Then $N\in \mod kQ$ and ${\cd_Q}(M, F^iN)= {\cd_Q}(M, \tau^{-i}\Sigma^iN ) $ vanishes for all $i \ge 1$ as $M$ is projective and $\tau^{-i} N$ is a positive shift of a $kQ$--module. For $i <0$, we see that $ {\cd_Q}(M, \tau^{-i}\Sigma^i N ) $ vanishes  as $\tau^{-i} N $ is a negative shift of a $kQ$--module. 
\end{proof}

Let $M, N$ and $L$ be in $ \proj \cR$. The Hall number  $F_{N,M}^L$ is given by the quotient of $|\Ext^1(M, N)_L|$ by $|\Hom(M,N)|$, where $\Ext^1(M, N)_L$ is the set of isomorphism classes of exact sequences 
$$ 0 \to N \to L \to M \to 0.$$

The Hall polynomial property for $Comp_{\Z/n}(\proj kQ)$ and $n>1$ has also been shown by Chen and Deng in \cite{ChenDeng}. Our proof is similar. 
\begin{theorem}\label{hall}
The Frobenius category $\proj \cR$ over the field $k=\F_q$ satisfies the Hall polynomial property, that is the Hall number $F_{N,M}^L$ is given by a Laurent polynomial in $q$. 
\end{theorem}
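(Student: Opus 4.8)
The plan is to establish the Hall polynomial property by reducing everything to the combinatorics of the mesh category, exploiting the fact already noted in the excerpt that for $M,N\in\proj\cR$ the dimensions of $\Hom(M,N)$ and $\Ext^1(M,N)$ are independent of $q$. First I would recall that $\proj\cR$ is equivalent (via $\res$) to $\gpr\cS$, and that $\gpr\cS$ is a Frobenius category which is closed under extensions in $\mod\cS$; hence $\Ext^1$ computed in $\gpr\cS$ agrees with $\Ext^1$ in $\mod\cS$, and the latter is the $\Ext$ of a module category over the finite-dimensional algebra $kQ_{\cS}/I$. The key structural input is Assumption \ref{assumption}: since $\proj\cR\simeq Comp_{\Z/n}(\proj kQ)$ and the objects of $\gpr\cS$ correspond to $2$-periodic (more generally $n$-periodic) complexes of projective $kQ$-modules, the condition that $\Ext^1(M,F^iN)$ vanishes for all but at most one $i$ controls how extensions of periodic complexes decompose into "graded pieces" indexed by $\Z/n$. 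This should let me reduce the computation of $|\Ext^1(M,N)_L|$ for periodic complexes to a computation over the hereditary category $\mod kQ$, where Hall polynomials are classically known to exist (Ringel).

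Concretely, I would proceed as follows. Fix isoclasses $M,N,L$ in $\gpr\cS$. Since every object of $\gpr\cS$ is, up to the projective-injective summands (the acyclic complexes, equivalently the projective $\cS$-modules), determined by a $kQ$-module together with $\Z/n$-grading data, write $M\cong M'\oplus\Pi_M$, $N\cong N'\oplus\Pi_N$ with $\Pi$'s projective-injective. Extensions by projective-injectives are rigid, so $|\Ext^1(M,N)_L|$ is governed by $|\Ext^1(M',N')_{L'}|$ for the corresponding "reduced" objects, which live in a category equivalent to $\mod kQ$ with a grading. Using Assumption \ref{assumption}, the bilinear form and the $\Ext^1$ space between graded-shifted indecomposables is concentrated in a single degree, so the count of extensions $0\to N'\to L'\to M'\to 0$ factors as a product over the residues $i\in\Z/n$ of counts of extensions in $\mod kQ$ between the degree-$i$ components. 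Each such factor is, by Ringel's theorem, a polynomial in $q$; and $|\Hom(M,N)| = q^{\dim\Hom(M,N)}$ with $\dim\Hom$ independent of $q$. Therefore $F_{N,M}^L = |\Ext^1(M,N)_L|/|\Hom(M,N)|$ is a Laurent polynomial in $q$.

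The step I expect to be the main obstacle is the precise bookkeeping that turns "the $\Ext^1$ between $M$ and the $F$-orbit of $N$ is concentrated in one degree" into an actual product decomposition of the set $\Ext^1(M,N)_L$ of isoclasses of extensions — in particular making sure that the middle term $L$ of an extension of periodic complexes is reconstructed correctly from its graded pieces, and that automorphisms (which enter because we count isoclasses, i.e.\ orbits under $\aut$) also factor compatibly, so that the quotient defining $F_{N,M}^L$ genuinely splits as a product of hereditary Hall numbers times a power of $q$. This requires care because $\gpr\cS$ is not hereditary (it has global dimension $\le 2$, with the projective-injectives contributing), so one must verify that the only obstruction classes and the only endomorphisms that matter are the ones already accounted for by the degree-$i$ pieces and by the rigid projective-injective summands. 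Once this decomposition is in place, the remaining assertions — that the resulting expression is a \emph{Laurent} polynomial (negative powers can appear from $|\Hom(M,N)|$ in the denominator, as in Bridgeland's and Gorsky's localized/semi-derived setting) rather than an honest polynomial — follow formally, and I would remark that for $F=\Sigma^n$ with $n\ge 2$ this recovers the Chen--Deng result \cite{ChenDeng}.
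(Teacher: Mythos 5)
There is a genuine gap at the central step of your argument. You claim that, after stripping off the projective--injective (acyclic) summands, the count of extensions $0\to N'\to L'\to M'\to 0$ ``factors as a product over the residues $i\in\Z/n$ of counts of extensions in $\mod kQ$ between the degree-$i$ components.'' This is false. The indecomposable non-acyclic objects of $Comp_{\Z/n}(\proj kQ)$ correspond to $\Sigma^a X$ with $X$ an indecomposable $kQ$--module and $a\in\Z/n$, and since $\Ext^1$ in a Frobenius category is $\Hom$ into the suspension in the stable category, one computes
$\Ext^1_{Comp_{\Z/n}}(\Sigma^aX,\Sigma^bY)\cong\bigoplus_{j\in\Z}\Ext^{\,b-a+1+jn}_{kQ}(X,Y)$,
which equals $\Ext^1_{kQ}(X,Y)$ when $b\equiv a$ but equals $\Hom_{kQ}(X,Y)$ when $b\equiv a-1\pmod n$. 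These cross-degree extensions are generically nonzero (they are precisely what produces the mixed relations between the $E_i$ and $F_j$ in Bridgeland's algebra), so the extension count does not split over residues; moreover the cross-degree contributions are not extension counts in the abelian category $\mod kQ$, so Ringel's classical Hall polynomials for $\mod kQ$ do not apply to them. What Assumption \ref{assumption} actually provides is that $\Ext^1_{\cp}(x,y)=\bigoplus_{i\in\Z}\Ext^1_{\cd_Q}(x,F^iy)$ is concentrated in a single $\Z$-degree; the paper uses this to lift the counting problem from the orbit category $\proj\cR$ to $\proj\cR^{gr}$ (equivalently to $\cd_Q$), where it is \emph{representation-directedness}, not hereditariness of $\mod kQ$, that yields polynomiality via the argument of \cite{Ringel90a}, and where the identification $\Ext^1(x^{\wedge},N)_L\cong\sqcup_{j}\Ext^1_{\cd_Q}(x,\bigoplus_i y_i)_{F^j(L)}$ keeps track of middle terms.

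A second missing ingredient is the reduction to indecomposables. The paper first treats the case where $M$ or $N$ is indecomposable by the lifting just described, and then handles decomposable $M$ by an induction in the style of \cite{ChenDeng}, whose key input is the Guo--Peng criterion (Lemma 2.1 of \cite{Guo}): for an exact sequence $0\to A\to M\to B\to 0$ in $\gpr\cS$ one has $\dim\Ext^1(M,M)\le\dim\Ext^1(A\oplus B,A\oplus B)$, with equality if and only if the sequence splits. Your proposal offers no substitute: middle terms of extensions between decomposable objects do not decompose compatibly with chosen direct sum decompositions of $M'$ and $N'$, so one cannot simply multiply indecomposable contributions. (Your preliminary reduction modulo projective--injective summands is correct, since $\Ext^1(\Pi,-)=\Ext^1(-,\Pi)=0$ in a Frobenius category and the paper's normalization of $F^L_{N,M}$ avoids automorphism counts; but that only removes the acyclic summands, not the genuine decomposability of the remaining pieces.)
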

\begin{proof} 
As we discussed $|\Hom(M, N)|$ is given by $q^n$ for some $n$. Thus it only remains to show that for all $M, N, L \in \proj  \cR$ the number of elements in $\Ext^1(M, N)_L$ is given by a polynomial in $q$.  
The Hall polynomial property is satisfied for $\proj \cR^{gr}$ as $\proj \cR^{gr}$ is representation directed, this can be proved using the same argument as in \cite{Ringel90a}. 
Recall that the category $\proj  \cR$ is an orbit category of $\proj \cR_C^{gr}$ by the functor $F$. We first check the Hall Polynomial property 
in the case that $M$ or $N$ is indecomposable. As the objects $\sigma(x)^{\wedge}$ are projective-injective in $\proj \cR$, we assume without loss of generality that there are objects 
$x, y_i \in \cR_0- \cS_0$ 
such that $M \cong x^{\wedge}$ 
and $ N \cong \bigoplus_{i=1}^n y_i^{\wedge}$. By Assumption \ref{assumption}, $\Ext^1_{\cd_Q}( x,F^j y_i) $ vanishes for all $j \not =0$.

Hence, 
$$ 
\Ext^1_{\proj \cR}(x^{\wedge}, N)\cong \Ext^1_{ \cp} (x,\bigoplus_{1 \le i \le n}  y_i) \cong 
$$

$$ \cong \bigoplus_{j \in \Z} \Ext^1_{\cd_Q}(x, \bigoplus_{1 \le i \le n} F^j(y_i)) \cong \Ext^1_{\cd_Q}(x, \bigoplus_{1 \le i \le n} y_i).$$ 
It follows that $\Ext^1(x^{\wedge}, N)_L \cong \sqcup_{j \in \Z} \Ext^1_{\cd_Q}( x, \bigoplus_{1 \le i \le n} y_i)_{F^j(L)} $ and as the number on the right hand side is given by a polynomial in $q$, the same holds for the left hand side. 
This proves the existence of Hall polynomials if $M$ is indecomposable. The case when $N$ is indecomposable is proved analogously.  

Now suppose that $0 \to A \to M \to B \to 0 $ is an exact sequence in $\gpr \cS \cong \proj \cR$, then we have that $\dim \Ext^1(M, M) \le \dim \Ext^1( A \oplus B, A \oplus B) $. Furthermore, by 
Lemma 2.1 of \cite{Guo} we have equality of dimensions if and only if the exact sequence is split. 
This result allows us to proceed as in \cite{ChenDeng} and prove the claim when $M$ is decomposable. 
\end{proof}

The category $\proj \cR$ viewed over $\F_q$ is exact and finitary. Therefore its Ringel-Hall algebra, which we denote by $H_q(\proj \cR) $ is well-defined by \cite{Hubery}. This algebra is generated by the isomorphism classes of objects $M \in \proj \cR$, which we denote $[M] \in H_q(\proj \cR)$.  As $\proj \cR$ satisfies the Hall polynomial property, we can view $q$ as a parameter and consider the generic Hall algebra, which we also denote $H_q(\proj \cR)\cong H_q(\gpr \cS)$. We denote $H^{tw}_q$ the Hall algebra whose multiplication is twisted by an Euler form defined as in \cite{Bridgeland13}. 

We denote  
$SH_q:= H_q(\proj \cR)[ \sigma(S_i)^{\wedge} , \sigma(\Sigma S_i)^{\wedge}]^{-1} $ 
the \emph{semi-derived Hall algebra} of $\proj \cR$, 
which is the localization of 
$H_q(\proj \cR)$ by the isomorphism classes of the projective-injective objects. We refer to Gorsky's work \cite{Gorsky} for the general theory of semi-derived Hall algebras. 
We denote $SH_q^{tw}:= H^{tw}_q[ \sigma(S_i)^{\wedge} , \sigma(\Sigma S_i)^{\wedge}]^{-1}$. 
If $M$ is an object in $\proj \cR$ we denote $[M]$ its class in $SH_q$ and in $SH^{tw}_q$.

\begin{proposition}
Let $\cR$ be as in Theorem \ref{nakajima category}. 
The quantum group $U_{t} (\cg)$ is isomorphic to the twisted semi-derived Hall algebra
 $$SH^{tw}_q/ 
 \langle  [\sigma(S_i)^{\wedge}] [ \sigma(\Sigma S_i)^{\wedge}]-1 \rangle.$$
The Quantum group generators $E_i, F_i, K_i$ correspond under this isomorphism to
\begin{align*}
& E_i \to (q-1) \prod_{j}[ \sigma(S_j)^{\wedge}]^{-  \dim \Hom(\rad P_i, S_j) }[  S_i^{\wedge}]\\
& F_i \to  - (q-1)^{-1} t \prod_{j}   [ \sigma(\Sigma S_j)^{\wedge}]^{-\dim \Hom(\rad P_i, S_j)} [ \Sigma S_i^{\wedge}] \\
& K_i \to [ \sigma(S_i)^{\wedge}] \\
\end{align*}
\end{proposition}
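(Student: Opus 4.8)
The plan is to establish this as essentially a translation of Bridgeland's theorem through the equivalence $\psi$ of Theorem \ref{nakajima category}, combined with Theorem \ref{main2}. Bridgeland \cite{Bridgeland13} proves that $U_t(\cg)$ is isomorphic to a quotient of the twisted semi-derived Hall algebra of $Comp_{\Z/2}(\proj \F_q Q)$; our task is to check that the functor $\psi$ matches the localizing sets, the twist, and the explicit generators. First I would recall Bridgeland's generators: in the semi-derived Hall algebra of $Comp_{\Z/2}(\proj \F_q Q)$ the element $E_i$ is (up to a normalizing scalar) the class of the stalk complex $C_{S_i}$ obtained by taking a minimal projective resolution $0 \to \rad P_i \to P_i \to S_i \to 0$ placed in the two-cyclic pattern with zero differential in one direction, and $F_i$ is its $\Sigma$-shift; $K_i$ is the class of the acyclic complex $P_i \xrightarrow{\id} P_i$. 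Under $\psi$ these go precisely to $S_i^{\wedge}$, $\Sigma S_i^{\wedge}$, and $\sigma(S_i)^{\wedge}$ by the displayed correspondences right after Theorem \ref{nakajima category}.

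Next I would address the matching of localizations. The semi-derived Hall algebra $SH_q$ is by definition $H_q(\proj\cR)$ localized at the projective-injective objects, and by Theorem \ref{equivalence} (or directly Theorem \ref{nakajima category}) the projective-injectives of $\proj\cR \cong \gpr\cS$ are exactly the $\sigma(c)^{\wedge}$, i.e. the $\sigma(S_i)^{\wedge}$ and $\sigma(\Sigma S_i)^{\wedge}$ in the case $F=\Sigma^2$. On Bridgeland's side these correspond to the acyclic complexes $P_i \xrightarrow{\id} P_i$ and their shifts, which are precisely the objects he inverts. So the identification $SH^{tw}_q$ here agrees with Bridgeland's twisted semi-derived Hall algebra of $Comp_{\Z/2}(\proj \F_q Q)$, and the further quotient by $\langle [\sigma(S_i)^{\wedge}][\sigma(\Sigma S_i)^{\wedge}] - 1\rangle$ corresponds to Bridgeland's relation $K_i K_{-i} = 1$ (identifying the two "halves" of the derived Hall algebra). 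The twist by the Euler form is defined, as we say, exactly as in \cite{Bridgeland13}, so it transports verbatim.

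The remaining point — and the one that requires genuine care rather than citation — is the computation of the normalizing scalars, in particular the factors $\prod_j [\sigma(S_j)^{\wedge}]^{-\dim\Hom(\rad P_i, S_j)}$ appearing in $E_i$ and $F_i$. Here I would argue that the class of the complex $C_{S_i} = (\rad P_i \hookrightarrow P_i)$ in $SH_q$ differs from the class of the ``reduced'' generator by exactly the class of the acyclic complex with components $\rad P_i$, and that in the semi-derived Hall algebra this acyclic complex is a monomial in the $[\sigma(S_j)^{\wedge}]$ with exponents given by the multiplicities of $P_j$ (equivalently of $S_j$ after passing to tops) in $\rad P_i$, i.e. $\dim\Hom(\rad P_i, S_j)$. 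Concretely, $\rad P_i = \bigoplus_j P_j^{\oplus \dim\Hom(\rad P_i, S_j)}$, so the acyclic complex $(\rad P_i \xrightarrow{\id} \rad P_i)$ equals $\bigoplus_j (P_j \xrightarrow{\id} P_j)^{\oplus \dim\Hom(\rad P_i,S_j)}$, whose class under $\psi$ is $\prod_j [\sigma(S_j)^{\wedge}]^{\dim\Hom(\rad P_i,S_j)}$; the relation in the semi-derived Hall algebra relating $[C_{S_i}]$, $[S_i^{\wedge}]$ and this monomial then yields the stated formula, with the $(q-1)$ and $-t(q-1)^{-1}$ prefactors coming directly from Bridgeland's normalization of $E_i$ and $F_i$. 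The main obstacle is thus purely bookkeeping: keeping the Euler-form twist, the direction conventions for the two-cyclic differential, and Bridgeland's sign/scalar conventions consistent with ours through the functor $\psi$, so that the exponents and prefactors come out exactly as displayed; once the dictionary of Theorem \ref{nakajima category} is in place there is no new structural input.
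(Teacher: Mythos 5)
Your proposal is correct and follows essentially the same route as the paper: the paper's own proof is just the observation that the equivalence of exact Frobenius categories from Theorem \ref{nakajima category} induces an isomorphism of twisted semi-derived Hall algebras, after which everything is quoted from Bridgeland. Your additional verifications --- that $\psi$ matches the localizing sets of projective-injectives with Bridgeland's acyclic complexes, and that the monomial $\prod_j [\sigma(S_j)^{\wedge}]^{-\dim\Hom(\rad P_i,S_j)}$ arises from the acyclic complex on $\rad P_i = \bigoplus_j P_j^{\oplus \dim\Hom(\rad P_i,S_j)}$ --- are exactly the bookkeeping the paper leaves implicit, and they are consistent with the displayed dictionary following Theorem \ref{nakajima category}.
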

\begin{proof}
By Theorem \ref{nakajima category} we have that  $Comp_{\Z/2}(\proj kQ)$ is equivalent to $\proj \cR$ as exact Frobenius categories. Hence their semi-derived twisted Hall algebras are isomorphic. Now the result follows from Bridgeland \cite{Bridgeland13}.  
\end{proof}

Let us denote by $\gpr_w (\cS)$ the sub-space of $\rep(w, \cS)$ consisting of the $\cS$--modules which are Gorenstein projective. 
Consider the Hall algebra $H(\gpr \cS)$ of constructible functions on the moduli spaces $ \gpr_w \cS$ over $\C$ generated by the indicator functions $\delta_{[M]}$, 
where $M$ runs through all isomorphism classes $[M]$ of $\gpr \cS$. 
Recall that $\delta_{[M]}$ is the constructible function that evaluates to $1$ on every point corresponding to a module isomorphic to $M  \in \gpr \cS$ and evaluates to $0$ everywhere else. 

\begin{theorem}
Specialized at $q=1$, the generic Hall algebra of $\proj \cR$, $H_{q=1}(\proj \cR)$ is isomorphic to the Hall algebra of constructible functions $H(\gpr \cS)$ on $\gpr \cS$. \end{theorem}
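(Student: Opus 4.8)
The plan is to exhibit a dictionary between the combinatorial data on which the two Hall algebras are built and then check that structure constants match after setting $q=1$. On the one side we have the generic Ringel–Hall algebra $H_q(\proj\cR)$, a free module over $\Z[q,q^{-1}]$ with basis the isoclasses $[M]$, $M\in\proj\cR\cong\gpr\cS$, and with multiplication $[M]\cdot[N]=\sum_L F^L_{N,M}[L]$; by Theorem~\ref{hall} each $F^L_{N,M}$ is a Laurent polynomial in $q$, so specialization at $q=1$ makes sense. On the other side we have $H(\gpr\cS)$, the algebra of constructible functions on the moduli stacks $\gpr_w\cS\subset\rep(w,\cS)$ with the usual convolution product $\delta_{[N]}\ast\delta_{[M]}=\sum_L g^L_{N,M}\,\delta_{[L]}$, where $g^L_{N,M}$ counts (with Euler characteristic) the variety of short exact sequences $0\to N\to L\to M\to 0$ up to the action of the automorphism groups. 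First I would record that both algebras have the \emph{same} underlying free module: the isoclasses of objects of $\gpr\cS$ biject with the $\C$-points of the moduli stacks $\gpr_w\cS$ modulo base change (this is exactly the identification $\gpr_w\cS$ of the excerpt), and $\delta_{[M]}$ is by definition supported on the stratum of $M$. So the content of the theorem is entirely in the structure constants.

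Next I would reduce the comparison of structure constants to a single geometric fact: for fixed $M,N,L$, the variety $E^L_{N,M}$ of short exact sequences $0\to N\to L\to M\to 0$ inside $\rep(w,\cS)$ (equivalently, the variety $\Ext^1(M,N)_L$ of the excerpt) has the property that its number of $\F_q$-points is a polynomial in $q$ (this is part of Theorem~\ref{hall}), and one wants that this counting polynomial, evaluated at $q=1$, equals its topological Euler characteristic over $\C$. The standard tool is the principle that if a variety defined over $\Z$ has point-count a polynomial $P(q)$ for all prime powers $q$, then $\chi_c$ over $\C$ equals $P(1)$; more robustly, one can use that the relevant varieties $E^L_{N,M}$ and the quotients by $\prod\GL$ are \emph{polynomial-count} in the sense of having point-counting polynomials, and that for polynomial-count varieties Euler characteristic equals the value of the counting polynomial at $1$. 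Since $\proj\cR$ is Hom-finite with monomial mesh relations (as recorded right before Assumption~\ref{assumption}), the quantities $|\Hom(M,N)|=q^{\dim\Hom(M,N)}$ are genuine monomials, so the Hall number $F^L_{N,M}=|E^L_{N,M}|/|\Hom(M,N)|$ and the constructible-function convolution coefficient, which is $\chi_c$ of the orbit variety divided by the Euler characteristic of the relevant automorphism-group torsor (a power of $\chi_c(\C^\times)=0$? — no: here the denominator stabiliser groups are affine spaces times tori, and one must keep track that the constructible-function formalism of Riedtmann–Schofield is set up so that at $q=1$ one really gets $\chi_c(E^L_{N,M})$ divided by $\dim\Hom$ contributions). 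I would phrase this step as: the map $[M]\mapsto\delta_{[M]}$ sends $F^L_{N,M}|_{q=1}$ to $g^L_{N,M}$ because both are obtained from the same $\Z$-scheme $E^L_{N,M}$ by the same normalisation, one via point-counting-then-specialise, the other via Euler characteristic, and these agree by the polynomial-count property.

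The main obstacle, and the step that needs genuine care rather than a one-line citation, is controlling the denominators and the group actions so that the two normalisations really match. In the finite-field Hall algebra the structure constant is $|E^L_{N,M}|/(|\Aut M|\,|\Aut N|)$ up to the standard rewriting $|E^L_{N,M}|\,|\Aut L| = |\{(\text{sub})\}|\cdot(\ldots)$, and Riedtmann's lemma relates this to $|\Hom(M,N)|^{-1}$ times a subvariety count; in the constructible-function world the analogous coefficient is $\chi_c$ of a variety of submodules of $L$ isomorphic to $N$ with quotient $M$. I would verify that the dimension of the group $\Hom(M,N)$ is the relevant ``fibre dimension'' in both pictures and that all automorphism groups occurring are connected with point-count a monomial in $q$ (true here because $\proj\cR$ is representation-directed in the graded cover and $F$-orbit categories inherit this, cf. the proof of Theorem~\ref{hall}), so that dividing by $|\Aut|$ at finite level and taking $\chi_c$ of the quotient stack at $q=1$ give literally the same number. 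Once the point-count polynomials of $E^L_{N,M}$, of the automorphism groups, and of the $\prod_i\GL(v(i))$-torsors are all established (the first two essentially already proved in Theorem~\ref{hall}, the third standard), the theorem follows by specialising $q\to1$ term by term and invoking equality of point-counting-polynomial-at-$1$ with $\chi_c$ for polynomial-count varieties. I expect no conceptual surprise beyond this bookkeeping; the subtlety is purely in making the normalisations line up, and I would state a short lemma to that effect before assembling the proof.
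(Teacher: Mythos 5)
Your proposal follows essentially the same route as the paper: both rest on the Hall polynomial property from Theorem~\ref{hall} together with the principle that the counting polynomial of $\Ext^1(M,N)_L$ evaluated at $q=1$ equals its Euler characteristic over $\C$ (the paper simply cites Proposition 6.1 of Reineke for this). The normalisation bookkeeping you worry about in the second half largely dissolves under the paper's conventions, since the only denominator in its Hall number is $|\Hom(M,N)|=q^n$, which specialises to $1$ at $q=1$.
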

\begin{proof}
As shown in Theorem \ref{hall}, for all  $L, N, M \in \proj \cR \cong \gpr \cS$ the Hall number $F^{L}_{M, N}$ is given by a Laurent polynomial $p(q) $ in $q$. Furthermore, we know that $H_q(\gpr \cS) \cong H_q(\proj \cR)$.   Hence $\gpr \cS$ also satisfies the Hall polynomial property.  In Proposition 6.1 of \cite{Reineke06}, Reineke shows that $p(1)$ equals $ \chi(\Ext_{\gpr \cS}^1(M, N)_L)$, the Euler characteristic of the moduli space $ \Ext_{\gpr \cS}^1(M, N)_L$, and hence the two Hall algebras are isomorphic for $q=1$. 

\end{proof}


\section{Hall algebras of quiver varieties}
\subsection{Strata of Gorenstein projective $\cS$--modules}

Let $\cR$ be a $\Hom$-finite Nakajima category associated to a quiver $Q$ of Dynkin type and let $\cS$ be 
the associated singular Nakajima category.

\begin{lemma}\label{phi}
1) The projective module $\sigma(x)^{\wedge} \in \mod \cR$ is bistable for all $x \in C$, hence 
the image of $  \sigma(x)^{\wedge} \in \mod \cS$ vanishes under the stratification functor $CK$. 

2) We have that $K_R \res x^{\wedge} \cong x^{\wedge}$
and $CK(  x^{\wedge}) = x_{ \cp}^{\wedge} $ for all $x \in  \cR_0 - \cS_0$. 
\end{lemma}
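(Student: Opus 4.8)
The plan is to unwind the definitions of $K_R$, $K_{LR}$ and $CK$ for the indecomposable projective modules $x^\wedge$ with $x \in \cR_0 - \cS_0$, using the description of $\cR$ as a mesh-type category and the explicit identification $\cR/\langle\cS\rangle \cong \cp$ noted in Section \ref{stratification}.

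For part 1), the key point is that $\sigma(x)^\wedge$ is a projective $\cS$--module viewed inside $\mod\cR$: by construction $\sigma(x)$ is an object of $\cS$, so $\res\sigma(x)^\wedge = \sigma(x)^\wedge_\cS$ is projective, and $K_{LR}\res$ of a projective $\cS$--module recovers the projective $\cR$--module, i.e. $K_{LR}\res\sigma(x)^\wedge \cong \sigma(x)^\wedge$. By Lemma \ref{lemma:stable} this makes $\sigma(x)^\wedge$ bistable. Once bistability is established, $K_{LR}\res\sigma(x)^\wedge \to K_R\res\sigma(x)^\wedge$ is an isomorphism (both equal $\sigma(x)^\wedge$, since $K_R\res$ also preserves this projective), so $CK(\sigma(x)^\wedge) = \cok(K_{LR}\res \to K_R\res) = 0$. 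Thus the image of $\sigma(x)^\wedge$ in $\mod\cS$ vanishes under $CK$. The only thing to check carefully here is that $\res$ really sends $\sigma(x)^\wedge_\cR$ to the projective $\cS$--module $\sigma(x)^\wedge_\cS$ and that both $K_{LR}$ and $K_R$ send that back to $\sigma(x)^\wedge_\cR$ — this is a formal consequence of $\res$ being a localization with the stated adjoints, together with the fact that $\res$ is fully faithful on $\proj\cR$ (this last point follows from Theorem \ref{equivalence}, since $\proj\cR \xrightarrow{\res} \gpr\cS$ is an equivalence and projective $\cS$--modules are precisely the images $\res\sigma(x)^\wedge$).

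For part 2), I would argue that for $x \in \cR_0 - \cS_0$ the projective $\cR$--module $x^\wedge$ already satisfies $K_R\res x^\wedge \cong x^\wedge$. One way: $K_R$ is the right adjoint of $\res$, and the counit $\res K_R \to \id$ together with the unit $\id \to K_R\res$; applying this to $x^\wedge$ one must show the unit $x^\wedge \to K_R\res x^\wedge$ is an isomorphism. Since $\res$ is exact and a localization, $K_R\res x^\wedge$ is the largest extension of $\res x^\wedge$ with no subquotient among the simples $S_y$, $y \in \cR_0 - \cS_0$ — but $x^\wedge$ is itself already bistable (being projective, $\Hom_\cR(x^\wedge, S_y) \ne 0$ would force $x$ to be a source-type vertex among $\cS_0$, contradiction, and $\Hom_\cR(S_y, x^\wedge) = 0$ since $x^\wedge$ is projective, hence has no simple submodule supported where $x^\wedge$ is not projective-simple) — so by Lemma \ref{lemma:stable} again $K_{LR}\res x^\wedge \cong x^\wedge$, and feeding bistability into the adjunction gives $K_R\res x^\wedge \cong x^\wedge$ as well. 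Then $CK(x^\wedge) = \cok(x^\wedge \to x^\wedge)$ computed in $\mod\cR$, but here the two copies are $K_{LR}\res x^\wedge$ and $K_R\res x^\wedge$; when they are literally equal the cokernel is $0$, so one must instead identify $CK(x^\wedge)$ with the image of $x^\wedge$ under the projection $\mod\cR \to \mod\cp = \mod(\cR/\langle\cS\rangle)$, which sends the representable $x^\wedge_\cR$ to the representable $x^\wedge_\cp$. The cleaner route is: the general fact (proved in \cite{Scherotzke}, and invoked just before the statement) that $CK$ lands in $\proj\cp$ and depends only on the stratum; since $x^\wedge$ lies in the stratum with $\gdim K_{LR}(x^\wedge) = \gdim x^\wedge$, and $CK$ is computed by the composite $\mod\cS \xrightarrow{K_R} \mod\cR \to \mod\cp$ whose effect on $\res x^\wedge$ is to take $x^\wedge_\cR \mapsto x^\wedge_\cR \mapsto x^\wedge_\cp$, one reads off $CK(x^\wedge) = x^\wedge_\cp$.

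I expect the main obstacle to be bookkeeping with the three functors $K_L, K_R, K_{LR}$: specifically, verifying cleanly that $x^\wedge_\cR$ (for $x \notin \cS_0$) is bistable and that the unit map $x^\wedge \to K_R\res x^\wedge$ is an isomorphism, rather than merely containing $x^\wedge$ as a submodule with a possibly nonzero quotient supported on $\cR_0 - \cS_0$. This should be settled by combining the description of the indecomposable projective $\cR$--modules via paths in $\Z Q_C/F$ modulo monomial mesh relations with the defining vanishing conditions for stable/costable modules, but it is the step where one genuinely uses the structure of $\cR$ rather than pure category theory.
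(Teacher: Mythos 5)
There are genuine gaps in both parts, and the second one is fatal as written. In part 2) you assert that $x^{\wedge}$ is bistable for $x \in \cR_0 - \cS_0$. This is false: by Yoneda, $\Hom_\cR(x^{\wedge}, S_x) = S_x(x) = k \neq 0$, and since $x \in \cR_0 - \cS_0$ this violates costability at $z = x$. Your own argument then self-destructs, as you notice: bistability would give $K_{LR}\res x^{\wedge} \cong x^{\wedge} \cong K_R \res x^{\wedge}$ and hence $CK(x^{\wedge}) = 0$, contradicting the claim $CK(x^{\wedge}) = x^{\wedge}_{\cp} \neq 0$. The escape route you take (reinterpreting $CK$ as a composite through $\mod \cp$) abandons the paper's definition $CK(M) = \cok(K_{LR}(M) \to K_R(M))$ instead of computing with it. The paper's resolution is that $x^{\wedge}$ is stable but \emph{not} costable: $K_{LR}(\res x^{\wedge})$ is identified with $\ker f$, where $f\colon x^{\wedge} \twoheadrightarrow x^{\wedge}_{\cp}$ is the canonical surjection (one checks $\ker f$ is the maximal costable submodule by computing $\Hom(-, S_z)$ on the sequence $0 \to \ker f \to x^\wedge \to x^\wedge_\cp \to 0$), and then $CK(\res x^{\wedge}) = \cok(\ker f \hookrightarrow x^{\wedge}) = x^{\wedge}_{\cp}$ falls out of the definition.

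The deeper missing ingredient, affecting both parts, is stability. Your part 1) derives $K_{LR}\res\,\sigma(x)^{\wedge} \cong \sigma(x)^{\wedge}$ as "a formal consequence" of $\res$ being a localization and an equivalence on $\proj \cR$, but this is circular: the kernel of the unit $M \to K_R \res M$ is precisely the largest submodule of $M$ supported on $\cR_0 - \cS_0$, so injectivity of the unit \emph{is} stability, and full faithfulness of $\res$ on projectives says nothing about maps from such submodules (which $\res$ kills). Stability of $y^{\wedge}$ is a genuine structural fact about $\cR$, which the paper obtains from the duality $\Hom(S_z, y^{\wedge}) \cong \Ext^2(y^{\wedge}, S_{\tau z})$ of Lemma 3.13 of the reference \cite{Scherotzke}, the right-hand side vanishing by projectivity; similarly, the isomorphism $K_R \res x^{\wedge} \cong x^{\wedge}$ in part 2) needs $\Ext^1(S_z, x^{\wedge}) \cong \Ext^1(x^{\wedge}, S_{\tau z}) = 0$ to kill the cokernel of the monomorphism $x^{\wedge} \hookrightarrow K_R \res x^{\wedge}$. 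Note also that even for a bistable module one only gets $K_{LR}\res M \cong M$, not $K_R \res M \cong M$; the two differ by $CK(M)$, so "feeding bistability into the adjunction" does not produce the statement about $K_R$. Your instinct at the end, that the structure of $\cR$ must be used and cannot be replaced by pure category theory, is exactly right; the proof needs those Auslander--Reiten-type duality isomorphisms as input.
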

\begin{proof}
Clearly, $\sigma(x)^{\wedge}$ is costable. Furthermore we have that  $\Hom(S_z, y^{ \wedge}) \cong \Ext^2( y^{\wedge}, S_{\tau z})$
for all $y \in \cR_0$ and all $z \in   \cR_0 - \cS_0$ by Lemma 3.13 of \cite{Scherotzke}. As the second term vanishes due to the projectivity of $y^{\wedge}$, 
we know that $y^{\wedge}$ is stable. We conclude that $\sigma(x)^{\wedge}$ is bistable. Finally, 
as 
$$K_{LR} ( \sigma(x)^{\wedge}) \cong K_R(  \sigma(x)^{\wedge})\cong  \sigma(x)^{\wedge}$$ we conclude that $CK( \sigma(x)^{\wedge})$ vanishes.

To prove the second part, we note that by the first part $x^{\wedge}$ is stable. Hence there is a monomorphism  $ x^{\wedge} \to K_R \res x^{\wedge}$ whose restriction to $\cS$ is an isomorphism. Furthermore we have that $\Ext^1(S_z, x^{\wedge}) \cong \Ext^1(x^{\wedge} , S_{\tau z}) $ for
all $z\in \cR_0-\cS_0$. The last term vanishes as $x^{\wedge}$ is projective. It follows that $x^{\wedge} \cong K_R \res x^{\wedge}$. 

Next, we observe that there is a canonical surjection 
$$ x^{\wedge} \stackrel{f} \to x_{ \cp}^{\wedge} \to 0.$$ 
Furthermore, the sequence 
$$0 \to \Hom(x_{\cp}^{\wedge}, S_z) \to \Hom(x^{\wedge}, S_z) \to  \Hom(\ker f, S_z) \to \Ext^1(x_{\cp}^{\wedge}, S_z)=0$$
is exact for all $z \in \cR_0-\cS_0$. The first two terms vanish for all $z \not=x$ and for $z=x$ they are one-dimensional. 
Hence $\ker f$ is a maximal costable submodule of $x^{\wedge}$. It follows that $K_{LR}(\res x^{\wedge})$ is given by 
$ \ker f$ and by the definition of $CK$ we have that $CK(\res x^{\wedge})\cong x^{\wedge}_{\cp}$. 
\end{proof}
The following Lemma provides a simple criterion for an $\cS$--module to belong to the exact subcategory $\gpr \cS$. 
\begin{corollary}
An $\cS$--module $M$ lies in $\gpr \cS$ if and only if $K_R M$ is projective. 
\end{corollary}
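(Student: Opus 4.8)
The plan is to reduce the corollary to the structural results about $\gpr \cS$ and the intermediate extension functor assembled above, in particular Theorem \ref{equivalence} and Lemma \ref{phi}. First I would recall the characterization of Gorenstein projective $\cS$-modules from \cite{Scherotzke}: an $\cS$-module $M$ lies in $\gpr \cS$ precisely when it is the restriction of a projective $\cR$-module, i.e.\ when there exists $P \in \proj \cR$ with $\res P \cong M$. This is exactly the content of Theorem \ref{equivalence}, which says $\res: \proj \cR \to \gpr \cS$ is an equivalence. So the task becomes: show that $\res P$, for $P \in \proj \cR$, is characterized among $\cS$-modules by the condition that $K_R(\res P)$ is projective, and conversely.

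For the forward direction, suppose $M \in \gpr \cS$, so $M \cong \res P$ for some projective $\cR$-module $P$, which we may take to be a direct sum of indecomposable projectives $x^{\wedge}$ with $x \in \cR_0$. Since $K_R$ is additive it suffices to treat each summand. If $x \in \cR_0 - \cS_0$, then by part 2) of Lemma \ref{phi} we have $K_R \res x^{\wedge} \cong x^{\wedge}$, which is projective. If $x = \sigma(c)$ for $c \in C$, then part 1) of Lemma \ref{phi} (more precisely the displayed isomorphism $K_{LR}(\sigma(x)^{\wedge}) \cong K_R(\sigma(x)^{\wedge}) \cong \sigma(x)^{\wedge}$ in its proof) gives $K_R \res \sigma(c)^{\wedge} \cong \sigma(c)^{\wedge}$, again projective. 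Hence $K_R M$ is projective for every $M \in \gpr \cS$.

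For the converse, suppose $M$ is an $\cS$-module with $K_R M$ projective. Since $\res$ is a localization functor with right adjoint $K_R$, the counit $\res K_R M \to M$ is an isomorphism (this is the standard fact that the right adjoint of a localization is fully faithful onto its image, and $\res \circ K_R \cong \id$). Thus $M \cong \res(K_R M)$ with $K_R M \in \proj \cR$, so $M$ lies in the essential image of $\res: \proj \cR \to \mod \cS$, which by Theorem \ref{equivalence} is exactly $\gpr \cS$. Therefore $M \in \gpr \cS$, completing the equivalence.

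The main obstacle is making sure the adjunction bookkeeping is airtight: one must know that $\res \circ K_R$ is naturally isomorphic to the identity on $\mod \cS$ (equivalently that $K_R$ is fully faithful), which follows from $\res$ being a localization functor as recalled in Section \ref{stratification}, and one must verify that the additivity of $K_R$ together with the two cases of Lemma \ref{phi} genuinely covers all indecomposable projective $\cR$-modules — i.e.\ that $\cR_0 = (\cR_0 - \cS_0) \sqcup \{\sigma(c) : c \in C\}$, which is immediate from the definition of $\cR$ and $\cS$. No delicate computation is needed beyond these two inputs.
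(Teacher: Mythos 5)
Your proof is correct and follows essentially the same route as the paper's: both directions rest on the equivalence $\res: \proj \cR \to \gpr \cS$ of Theorem \ref{equivalence} together with the computation $K_R \res x^{\wedge} \cong x^{\wedge}$ from Lemma \ref{phi}. The paper's own proof is terser — it only spells out the forward direction and leaves the converse (via $\res \circ K_R \cong \id$) implicit — so your version simply supplies details the authors omit.
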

\begin{proof}
We know by Theorem 5.5 of \cite{Scherotzke}, that every indecomposable module in $\gpr \cS$ is isomorphic to $\res x^{\wedge} $ for an element $x \in \cR_0$. As $K_R \res x^{\wedge} \cong x^{\wedge}$ by Lemma \ref{phi}, this concludes the proof. 
\end{proof}

\begin{lemma} \label{transversal}
The stratum containing $\res  \sigma(S_i)^{\wedge}$ is transversal to the zero stratum and the stratum containing $\res  S_i ^{\wedge} $ is transversal to the stratum containing the simple $\cS$--module $S_{\sigma( \Sigma S_i )} $. 
\end{lemma}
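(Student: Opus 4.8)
The plan is to compute the images of the three modules under the stratification functor $CK$ and compare. Recall that by definition two strata are transversal precisely when their modules have isomorphic images under $CK$, and a stratum is minimal (equal to the zero stratum) exactly when $CK$ vanishes on it. So everything reduces to two $CK$-computations, both of which can be extracted from Lemma \ref{phi} together with the defining short exact sequences relating $K_L$, $K_{LR}$ and $K_R$.

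For the first claim, I would observe that $\res\sigma(S_i)^{\wedge}$ is the restriction of the projective $\cR$-module $\sigma(S_i)^{\wedge}$, and $\sigma(S_i)$ is an object of $C$, hence of $\cR_0$. By part 1) of Lemma \ref{phi} the module $\sigma(x)^{\wedge}$ is bistable for every $x\in C$, so by Lemma \ref{lemma:stable} we have $K_{LR}\res\sigma(S_i)^{\wedge}\cong\sigma(S_i)^{\wedge}\cong K_R\res\sigma(S_i)^{\wedge}$, and therefore $CK(\res\sigma(S_i)^{\wedge})=\cok(K_{LR}\to K_R)=0$. Thus the stratum containing $\res\sigma(S_i)^{\wedge}$ is the zero stratum, and in particular transversal to it. (This half is essentially immediate from Lemma \ref{phi}; I would state it mainly for completeness and to fix notation for the second half.)

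For the second claim I would use part 2) of Lemma \ref{phi}: since $S_i$ is a simple $kQ$-module, the vertex labelled $S_i$ lies in $\cR_0-\cS_0$, so $CK(\res S_i^{\wedge})\cong (S_i)^{\wedge}_{\cp}$, the representable $\cp$-module at the vertex $S_i$. On the other side I have to identify $CK(S_{\sigma(\Sigma S_i)})$, where $S_{\sigma(\Sigma S_i)}$ is the one-dimensional simple $\cS$-module supported at the vertex $\sigma(\Sigma S_i)$. The key point is that $\sigma(\Sigma S_i)^{\wedge}_\cS=\res\sigma(\Sigma S_i)^{\wedge}_\cR$ has a short filtration whose only composition factor supported away from the $\sigma$-vertices is exactly the simple $S_{\sigma(\Sigma S_i)}$ (the other factors being simples $S_{\sigma(c)}$ on which $CK$ is trivial); unwinding the definition of $CK(\res\sigma(\Sigma S_i)^{\wedge})=\cok(K_{LR}\to K_R)$ via Lemma \ref{phi}, and noting that $CK$ factors through the quotient $\cS\to\cp$ killing exactly the $\sigma$-vertices, one sees that $CK(S_{\sigma(\Sigma S_i)})$ computes the same representable $\cp$-module that appeared in the filtration of $\sigma(\Sigma S_i)^{\wedge}$. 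Using that $\sigma(\Sigma S_i)$ is by construction the $\sigma$-vertex attached to $\Sigma S_i$, with an arrow $\sigma(\Sigma S_i)\to\tau^{-1}\Sigma S_i$, and tracing through the mesh relations on $\Z Q_C/F$, this representable is precisely $(S_i)^{\wedge}_{\cp}$. Hence $CK(\res S_i^{\wedge})\cong CK(S_{\sigma(\Sigma S_i)})$ and the two strata are transversal.

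The main obstacle is the second half: computing $CK$ on the simple module $S_{\sigma(\Sigma S_i)}$ rather than on a representable, since the clean formulas of Lemma \ref{phi} apply to representables $x^{\wedge}$ and to the bistable $\sigma(x)^{\wedge}$, not directly to simples. The honest way around this is to realize $S_{\sigma(\Sigma S_i)}$ inside a short exact sequence of $\cS$-modules involving $\res\sigma(\Sigma S_i)^{\wedge}$ and modules on which $CK$ is already understood (the $\res\sigma(c)^{\wedge}$, which are bistable, and $\res(\Sigma S_i)^{\wedge}$, controlled by Lemma \ref{phi} 2)), and then use right-exactness of $CK$ — equivalently, exactness of the defining triangle $K_{LR}\to K_R\to CK\to$ — to reduce the computation to the representable case already handled. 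I expect the bookkeeping of which composition factors survive the projection $\cS\to\cp$ to be the only genuinely delicate point; everything else is formal from the lemmas above.
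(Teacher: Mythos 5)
The first half of your argument is correct and is exactly the paper's: $\sigma(S_i)^{\wedge}$ is bistable by Lemma \ref{phi}(1), so $CK(\res\sigma(S_i)^{\wedge})=0$ and that stratum is minimal. Likewise your identification $CK(\res S_i^{\wedge})\cong (S_i)^{\wedge}_{\cp}$ via Lemma \ref{phi}(2) is the right first step for the second half.

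The gap is in your computation of $CK(S_{\sigma(\Sigma S_i)})$, and it is not merely ``delicate bookkeeping.'' Two things go wrong. First, you assert that the other composition factors of $\res\sigma(\Sigma S_i)^{\wedge}$ are simples $S_{\sigma(c)}$ ``on which $CK$ is trivial.'' This is false, and in fact contradicts the very statement you are proving: the content of the second half of the lemma is precisely that $CK(S_{\sigma(\Sigma S_i)})\cong (S_i)^{\wedge}_{\cp}\neq 0$, so $CK$ does \emph{not} vanish on these simples (every composition factor of an $\cS$--module is some $S_{\sigma(c)}$, so if $CK$ killed them all there would be nothing to compute). Second, the reduction to representables hinges on ``right-exactness of $CK$,'' which is unproven and should not be expected: $CK=\cok(K_{LR}\to K_R)$, where $K_R$ is a right adjoint (hence only left exact) and $K_{LR}$ is an image of a map from the right-exact $K_L$; such intermediate-extension constructions are generically neither left nor right exact, and no exactness property of $CK$ is established anywhere in the paper. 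A symptom that a purely formal argument cannot work is the shift: the short exact sequence $0\to\res \Sigma^{-1}S_i^{\wedge}\to\res\sigma(\Sigma^{-1}S_i)^{\wedge}\to S_{\sigma(\Sigma^{-1}S_i)}\to 0$ has middle term with $CK=0$ and kernel with $CK=(\Sigma^{-1}S_i)^{\wedge}_{\cp}$, yet the cokernel must have $CK=(S_i)^{\wedge}_{\cp}$ --- the extra $\Sigma$ cannot come out of a naive exact-sequence chase. The paper avoids all of this by quoting the explicit computation $CK(S_{\sigma(c)})\cong(\Sigma c)^{\wedge}$ from Theorem 4.5 of \cite{Scherotzke} and comparing it with Lemma \ref{phi}(2); some such external input (or a genuine calculation of $K_{LR}$ and $K_R$ on the simple module) is needed to close your argument.
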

\begin{proof}
The first part follows immediately from the fact that $\sigma(S_i)^{\wedge}$ is bistable by Lemma \ref{phi} 
and therefore $\res \sigma(S_i)^{\wedge}$ is transversal to the zero strata. 
Let $ x \in \cR_0-\cS_0$, then by Lemma 
\ref{phi}, the stratification functor $CK$ takes value $ x^{\wedge}_{ \cp}$ on the stratum containing  $\res x^{\wedge}$.
As $ CK(S_{\sigma S_i} ) =( \Sigma S_i)^{\wedge}$ by Theorem 4.5  \cite{Scherotzke} (note that our convention is slighlty different on the notation of $\sigma(c)$), we recover the claim. 
\end{proof}

\begin{lemma}\label{open}
The irreducible components in $\gpr_w(\cS)$ are open in $\rep(w, \cS)$ for any dimension vector $w$. Furthermore, for every irreducible component $C$, 
there is an $M \in \gpr_w(\cS)$ whose orbit is dense and open in $C$. 
\end{lemma}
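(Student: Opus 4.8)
The plan is to analyze the locus $\gpr_w(\cS) \subseteq \rep(w,\cS)$ using the characterization from the Corollary above: an $\cS$-module $M$ lies in $\gpr \cS$ if and only if $K_R M$ is projective. First I would show that Gorenstein projectivity is an open condition. The functor $K_R$ is the right adjoint of the localization $\res$, so for a module $M$ with fixed dimension vector $w$, the dimension vector of $K_R M$ is determined by $w$ together with the dimensions of certain $\Hom$-spaces; more precisely $K_R M$ restricts to $M$ on $\cS$ and its values on $\cR_0 - \cS_0$ are built from $\Hom_\cS(\res x^\wedge, M)$-type data, whose dimensions are upper semicontinuous in $M$. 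The module $K_R M$ is projective exactly when it has the maximal possible dimension vector $(v_{\max}, w)$, equivalently when $\gdim K_R M$ attains its generic (maximal) value; by semicontinuity this is an open condition on $\rep(w,\cS)$. Hence $\gpr_w(\cS)$ is open in $\rep(w,\cS)$, and therefore each of its irreducible components is open in $\rep(w,\cS)$ as well (an irreducible component of an open subset of a variety is an open subset of an irreducible component of the ambient variety, but since $\gpr_w(\cS)$ is open, its components are themselves open in the closures of the ambient components that meet it).

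For the second assertion, fix an irreducible component $C$ of $\gpr_w(\cS)$. Since $\gpr \cS$ has, up to isomorphism, only finitely many indecomposables (each of the form $\res x^\wedge$ for $x \in \cR_0$, by Theorem 5.5 of \cite{Scherotzke}, using $\Hom$-finiteness), there are only finitely many isomorphism classes of objects of $\gpr \cS$ with dimension vector $w$, hence only finitely many $\GL$-orbits in $\gpr_w(\cS)$, and they stratify $C$. As $C$ is irreducible and is a finite union of the locally closed orbits contained in it, exactly one of these orbits is dense in $C$; call its representative $M$. That orbit, being a constructible dense subset of the irreducible $C$, contains a dense open subset of $C$. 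It remains to see that the orbit itself is open in $C$ (equivalently in $\gpr_w(\cS)$): this follows because the closure of the dense orbit is all of $C$, every other orbit in $C$ lies in the boundary and has strictly smaller dimension, so the union of the non-dense orbits is a proper closed subset of $C$, whence the dense orbit is open.

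The main obstacle I anticipate is the openness of Gorenstein projectivity, i.e.\ making precise that $M \mapsto \gdim K_R M$ is upper semicontinuous and that projectivity corresponds to the generic value. One must check that $K_R$ can be computed "in families" — that there is an upper semicontinuous function on $\rep(w,\cS)$ recording $\gdim K_R M$. Concretely, one writes down $K_R$ via the adjunction $\Hom_{\cR}(P, K_R M) \cong \Hom_{\cS}(\res P, M)$ applied to the indecomposable projectives $P = y^\wedge$, $y \in \cR_0 - \cS_0$; the value $(K_R M)(y) \cong D\Hom_{\cS}(\res y^\wedge, M)$ is the kernel/cokernel of linear maps depending algebraically on the matrices defining $M$, so its dimension jumps up on closed subsets. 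The locus where all these dimensions are minimal (equivalently where $\gdim K_R M$ is maximal, forcing $K_R M \cong$ a projective by the Corollary and Lemma \ref{phi}) is then open, and one checks it is nonempty by exhibiting at least one Gorenstein projective module of dimension vector $w$ when $\gpr_w(\cS) \neq \emptyset$. Once this semicontinuity statement is nailed down, the rest is the standard finite-orbit argument above.
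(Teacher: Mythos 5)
Your argument for the second assertion (finitely many indecomposables in $\gpr\cS$, hence finitely many orbits in $\gpr_w(\cS)$, hence a dense orbit in each component, which is open because orbits are locally closed) is sound and is a legitimate alternative to the paper's argument, which instead picks $M$ with $\dim\End(M)$ minimal, shows $M$ is rigid using that $\gpr\cS$ is closed under extensions, and concludes that its orbit is open and dense.

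The first assertion is where your proposal has a genuine gap. Your openness argument rests entirely on the claimed equivalence ``$K_R M$ is projective if and only if $\gdim K_R M$ takes its extremal value,'' and this is neither proved nor plausible as stated. First, you contradict yourself about which extreme is relevant: since $(K_R M)(y)\cong\Hom_\cS(\res y^{\wedge},M)$ and $\dim\Hom(N,-)$ is \emph{upper} semicontinuous, the locus where these dimensions are \emph{minimal} is the open one, while the locus where $\gdim K_R M$ is maximal is closed; you assert ``maximal'' in the main text and ``minimal (equivalently maximal)'' in the last paragraph. Second, and more seriously, neither implication of the equivalence is justified: projectivity of an $\cR$--module is not detected by its dimension vector, so even on the open locus where $\gdim K_R M$ is generic there is no reason for $K_R M$ to be projective; conversely it is not clear that $M\in\gpr_w(\cS)$ forces $\gdim K_R M$ to be locally minimal. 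Since you yourself flag this as ``the main obstacle,'' the proposal does not actually establish openness. The paper avoids this entirely by a deformation-theoretic argument: a tangent vector at $x\in\gpr_w(\cS)$ is a module $x+\epsilon D$ over $\cS\otimes k[\epsilon]/\epsilon^2$, and the $\Ext$-vanishing conditions characterizing Gorenstein projectivity are automatically inherited by such first-order deformations of a Gorenstein projective $x$; hence the tangent space of $\gpr_w(\cS)$ at $x$ equals that of $\rep(w,\cS)$, and $\gpr_w(\cS)$ is open. If you want to repair your approach without switching to the tangent-space argument, the workable route is to express Gorenstein projectivity as the vanishing of $\Ext^i_\cS(M,\cS)$ for the finitely many relevant $i$ and use upper semicontinuity of $\dim\Ext^i(M,-)$ in $M$, rather than trying to read off projectivity of $K_R M$ from its dimension vector.
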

\begin{proof}
The tangent vector of $x \in \gpr_w(\cS)$ are given by all $x +\epsilon D \in \mod (\cS\otimes k[\epsilon]/\epsilon^2)$ where $D$ is a derivation. This tangent vector lies in $\gpr (\cS\otimes k[\epsilon]/\epsilon^2)$  if and only if $\Ext^i( x+ \epsilon D, \cS\otimes k[\epsilon]/\epsilon^2) $ vanishes for all $i$. This condition is always satisfied if $ 
x\in \gpr \cS$. Hence the tangent space of $x$ with respect to $\rep(w, \cS)$ coincides with the tangent space with respect to $\gpr_w \cS$. As a consequence, $\gpr_w \cS$ is open.

Next, we have that every indecomposable object of $\gpr \cS$ is rigid, hence its orbit is open and dense. Let $M \in \gpr_w(\cS)$ have the property that $end(M)$ is minimal. We prove by contradiction that $M$ is rigid.  Assume that $M$ decomposes as
direct sum $M_1 \oplus M_2$ such that $ \Ext^1(M_1, M_2)$ does not vanish. As a consequence, there is a non-split exact sequence $0\to M_2 \to N \to M_1 \to 0$ with $N \in \rep(w, \cS)$. As $\gpr \cS$ is closed under extensions, we also have $N \in \gpr \cS$. Furthermore, $end(N)< end(M)$ which is a contradiction. Hence $M$ is rigid. It follows that $\gpr_w(\cS)$ is either empty or contains a dense open orbit. 
\end{proof}

Note that the next result does not hold in general if either $A$ or $B$ do not lie in $\gpr \cS$. 

\begin{proposition}
Let $A \in \gpr \cS$ be indecomposable and non-projective. Suppose that $B$ is in $\gpr \cS$ and that $A$ and $B$ lie in the same stratum. 
Then $A $ and $B$ are isomorphic. 
\end{proposition}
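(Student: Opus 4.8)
The plan is to unpack the hypothesis ``$A$ and $B$ lie in the same stratum'' into two pieces of information — that $A$ and $B$ have isomorphic images under the stratification functor $CK$, and that $\gdim K_{LR}A=\gdim K_{LR}B$ — and to show that the first pins down $B$ up to a projective-injective summand while the second removes that summand. To set up, I would first recall from Theorem~5.5 of \cite{Scherotzke} (used in the corollary above) that every indecomposable of $\gpr\cS$ has the form $\res x^{\wedge}$ with $x\in\cR_0$, being projective exactly when $x\in\cS_0$. Since $A$ is indecomposable and non-projective, $A\cong\res x^{\wedge}$ for a unique $x\in\cR_0-\cS_0$, and then Lemma~\ref{phi} gives $CK(A)\cong x^{\wedge}_{\cp}$, an \emph{indecomposable} projective $\cp$-module. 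Write $B\cong\bigoplus_i\res y_i^{\wedge}$ as a direct sum of indecomposables (each summand again lies in $\gpr\cS$, which is closed under summands). The functors $K_L,K_R,K_{LR}$, and hence $CK$, are additive, being built from the adjoints of $\res$ and the canonical natural transformation between them, so Lemma~\ref{phi} yields $CK(B)\cong\bigoplus_{y_i\in\cR_0-\cS_0}(y_i)^{\wedge}_{\cp}$, the projective summands contributing nothing.

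Next I would invoke the hypothesis. Because the stratification functor is constant on strata and $A,B$ lie in the same stratum, $CK(A)\cong CK(B)$, that is $x^{\wedge}_{\cp}\cong\bigoplus_{y_i\in\cR_0-\cS_0}(y_i)^{\wedge}_{\cp}$. Since $\proj\cp$ is Krull--Schmidt and indecomposable projective $\cp$-modules attached to distinct vertices of $\Z Q/F$ are non-isomorphic, exactly one index $i$ has $y_i\in\cR_0-\cS_0$, and for it $y_i=x$; every other $y_i$ lies in $\cS_0$. Hence $B\cong A\oplus P$ with $P\in\gpr\cS$ projective. To finish, I would use Lemma~\ref{lemma:stable}, which records the stratum of a Gorenstein projective module $M$ as $\gdim K_{LR}M$: additivity of $K_{LR}$ gives $\gdim K_{LR}A=\gdim K_{LR}B=\gdim K_{LR}A+\gdim K_{LR}P$, so $K_{LR}P=0$; applying the exact functor $\res$ and using $\res K_{LR}\cong\id$ (as $\res$ is a localization with fully faithful adjoints) yields $P=\res K_{LR}P=0$, whence $B\cong A$.

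The delicate step, and the one where all three hypotheses are genuinely used, is the reduction $B\cong A\oplus(\text{projective})$: that $A\in\gpr\cS$ is indecomposable and non-projective makes $CK(A)$ an indecomposable representable $\cp$-module, and that $B\in\gpr\cS$ makes $CK(B)$ a sum of representables to which the projective summands of $B$ do not contribute, so the Krull--Schmidt comparison forces the non-projective parts of $A$ and $B$ to coincide. Outside $\gpr\cS$ neither of these structural facts is available — the indecomposables are no longer exhausted by the $\res x^{\wedge}$ and $CK$ of an indecomposable need not be an indecomposable representable — which is exactly why the statement can fail there. Everything else in the argument is bookkeeping with the additive functors $\res$, $K_R$, $K_{LR}$ and $CK$.
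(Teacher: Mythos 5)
Your proof is correct, and it takes a genuinely different route from the one in the paper. The paper's argument stays at the level of the canonical exact sequences $0 \to K_{LR}(-) \to K_R(-) \to CK(-) \to 0$: it notes that $K_R(A)$ and $K_R(B)$ are projective (since $A,B \in \gpr \cS$), that $CK(A) \cong CK(B)$ is indecomposable, assembles these into a map of short exact sequences, and concludes $K_{LR}(A) \cong K_{LR}(B)$ from equality of dimension vectors together with a surjection $K_{LR}(B) \twoheadrightarrow K_{LR}(A)$ whose origin is left implicit. You instead decompose $B$ into indecomposables using the classification $\res x^{\wedge}$, $x \in \cR_0$, apply Krull--Schmidt to the identity $CK(A) \cong CK(B)$ in $\proj \cp$ to force $B \cong A \oplus P$ with $P$ projective, and then kill $P$ by the additivity of $K_{LR}$ and the equality $\gdim K_{LR}A = \gdim K_{LR}B$ from Lemma \ref{lemma:stable}. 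What your version buys is completeness and transparency: every step reduces to Lemma \ref{phi}, additivity of the kernel/extension functors, and Krull--Schmidt, and the final passage from $K_{LR}P=0$ to $P=0$ via $\res K_{LR} \cong \id$ is spelled out, whereas the paper's diagram chase leaves both the surjectivity claim and the last step (from $K_{LR}(A)\cong K_{LR}(B)$ back to $A \cong B$) to the reader. What the paper's version buys is brevity and independence from the explicit classification of indecomposables in $\gpr\cS$, since it only manipulates the functors $K_{LR}$, $K_R$, $CK$. Your closing remark correctly identifies where the hypothesis $A,B \in \gpr\cS$ enters and why the statement fails outside that subcategory.
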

\begin{proof}
As $A$ and $B$ lie in the same stratum, we have that $CK(A) \cong CK(B)$ and $K_R(A)$ and $K_R(B)$ are both projective. Furthermore as $CK(A)$ is indecomposable, so is $CK(B)$.
Hence, we obtain the following commutative diagram
\[ \xymatrix{ 
0 \ar[r] & K_{LR}(A) \ar[r] & K_R(A) \ar[r] & CK(A)  \ar[r] &0 \\
0 \ar[r] & K_{LR}(B ) \ar[r]  \ar@{->>}[u] & K_R(B) \ar[r] \ar@{->>}[u] & CK(B) \ar@{=} [u]  \ar[r] &0. 
}\]
As $K_{LR}(A)$ and$K_{LR}(B)$ have the same dimension vector, they are necessarily isomorphic. 
\end{proof}

\subsection{The graded Grothendieck ring of quiver varieties}

In this section, we introduce the graded Grothendieck ring associated to generalized quiver varieties. 
We show that the dual graded Grothendieck ring has an algebra structure induced by the `Hall' convolution 
product on the level of the derived category of constructible sheaves on quiver varieties. 

For $\alpha, \beta$ two dimension vectors of $\cS$, we fix $W_0 \subset V_{\alpha+\beta}$ a vector space of graded dimension $\alpha$ and let $F_{\alpha, \beta}$ be the subvariety 
$$F_{\alpha, \beta}:= \{ y \in \rep(\alpha+\beta, \cS)\text{ such that } y(W_0) \subset  W_0 \} \subset \rep(\alpha+\beta, \cS)$$
We consider the convolution diagram, where $p$ is the projection on the submodule $y|_{W_0}$ and quotient $y/ y|_{W_0}$ and $q$ is the embedding. 
\[
 \xymatrix{ \rep(\alpha, \cS) \times \rep(\beta, \cS ) &   F_{\alpha, \beta} \ar[l]^{\ \ \  \ \ \ \ \ \ \ \ \ p}  \ar[r]_{q  \ \ \ \ \ } &\rep(\alpha+\beta, \cS )}
 \]
 
Let us denote by $D(\rep(\alpha, \cS))$ the derived category of constructible sheaves on $\rep(\alpha, \cS)$ and
by $$\Delta: D(\rep(\alpha+\beta, \cS )) \to D(\rep(\alpha, \cS)) \times D(\rep(\beta, \cS ) ), F \mapsto p_!q^* F$$ 
the comultiplication. 

Recall that we have a proper map 
$\pi: \cm(v, w) \rightarrow \cm_0(w) \cong \rep (w, \cS)$.  We denote $\pi(v,w) \in D(\rep(w, \cS))$ the pushforward along $\pi$ of the constant sheaf $\mathbb{C}_{\cm(v,w)}$. By the Decomposition Theorem  $\pi(v,w)$ is a direct sum of shifts of intersection cohomology complexes. 

Based on \cite{Scherotzke}, we can use an analogous proof to \cite{VargnoloVasserot03} to recover the following result in the case of generalized quiver varieties.

\begin{proposition}\label{pi}
The comultiplication yields 
$$\Delta (\pi(v,w) )=\bigoplus_{v_1+v_2 =v, w_1 +w_2=w}\Sigma^{d(v_2, w_2,v_1, w_1)  -   d(v_1, w_1,v_2, w_2)}  \pi(v_1, w_2) \otimes \pi(v_2, w_2).$$ As a consequence all
direct summands of $\pi(v,w)$ are sent  to the external product of direct sums of shifts of IC-sheaves appearing as direct summands of sheaves $\pi(v,w)$.
\end{proposition}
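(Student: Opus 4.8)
The plan is to follow the strategy of Varagnolo--Vasserot \cite{VargnoloVasserot03}, transposed into the setting of generalized quiver varieties provided by \cite{Scherotzke}, exploiting the fact that all the relevant geometry -- the smooth varieties $\cm(v,w)$, the affine varieties $\cm_0(w) \cong \rep(w,\cS)$, the proper maps $\pi$ between them, and the correspondence defined by $F_{\alpha,\beta}$ -- behaves exactly as in Nakajima's classical construction. Concretely, I would first set up a triple product diagram refining the convolution diagram above: introduce the variety parametrizing a stable representation in $\cm(v,w)$ together with an $F$-stable subspace $W_0$ of graded dimension $\alpha$ compatible with the framing, and check that this sits in a commutative diagram relating $\pi \colon \cm(v,w) \to \rep(w,\cS)$ to the maps $p$ and $q$ of the convolution diagram, together with the analogous maps $\pi(v_i,w_i)$ on the factors. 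This is the point where \cite{Scherotzke} is used: one needs the description of $\cm_0(w)$ as $\rep(w,\cS)$ and of the strata $\cm_0^{reg}(v,w)$ via the intermediate extension functor $K_{LR}$ (Lemma \ref{lemma:stable}), so that taking a submodule at the level of $\cS$-modules is compatible with taking an $F$-stable subspace at the level of $\cR$-data.

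Second, I would establish that the relevant square -- the one expressing $\Delta \circ \pi_! = (\pi_! \times \pi_!) \circ (\text{restriction to } F_{\alpha,\beta})$ -- is Cartesian (or becomes so after passing to the appropriate strata), so that proper base change applies and $p_! q^* \pi_!(\mathbb{C}_{\cm(v,w)})$ is computed by pushing forward the constant sheaf along the composite map from the triple-product variety. Decomposing that variety according to the graded dimension of the induced subspace/quotient at each vertex yields the direct sum over $v_1+v_2=v$, $w_1+w_2=w$, and on each piece the map factors as a proper map to $\cm(v_1,w_1) \times \cm(v_2,w_2)$ (this is where smoothness of the $\cm(v,w)$ and the explicit fibre dimensions enter). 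The shift $\Sigma^{d(v_2,w_2,v_1,w_1) - d(v_1,w_1,v_2,w_2)}$ then arises as the difference of the dimensions of the fibres of $p$ and of $q$ along the stratum in question; I would compute these dimensions exactly as in \cite{VargnoloVasserot03}, using the standard formulas for the dimensions of Nakajima quiver varieties and of the correspondences, now read off from the Euler form of $\cR$ and $\cS$.

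Third, the final sentence of the statement -- that every direct summand of $\pi(v,w)$ goes to an external product of shifts of IC-summands of the $\pi(v',w')$ -- follows formally once the displayed identity is known: by the Decomposition Theorem each $\pi(v,w)$ is a finite direct sum of shifted IC-sheaves on closures of strata, the category of perverse sheaves is abelian and semisimple on this class of objects, and $\Delta$ is an additive (indeed exact, up to shift) functor; so a single summand of $\pi(v,w)$, being a direct summand of the whole, maps to a direct summand of the right-hand side, which by the displayed formula is a direct sum of external products of IC-summands of the $\pi(v_i,w_i)$. Here one invokes that external product of semisimple perverse complexes is again semisimple (Künneth / the decomposition theorem for products) to conclude the summands are of the asserted form.

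I expect the main obstacle to be the first two steps, specifically verifying that the convolution correspondence $F_{\alpha,\beta} \subset \rep(\alpha+\beta,\cS)$ lifts compatibly to the smooth resolutions $\cm(v,w)$ and that the resulting diagram is Cartesian after stratification -- in the classical case this is a careful but known computation, but in the generalized Nakajima-category setting one must re-derive the dimension count for the correspondence and check that the functor $K_{LR}$ (equivalently, the stability/costability conditions of \ref{lemma:stable}) interacts correctly with passing to sub- and quotient modules, so that the stratification of $F_{\alpha,\beta}$ matches the stratification of the product $\bigsqcup \cm_0(w_1) \times \cm_0(w_2)$ pulled back along $p$. Once this geometric input is in place, the sheaf-theoretic manipulation (proper base change, Decomposition Theorem, semisimplicity) is routine and parallels \cite{VargnoloVasserot03} verbatim.
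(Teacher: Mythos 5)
Your proposal follows exactly the route the paper takes: the paper offers no written proof of Proposition \ref{pi}, stating only that one argues as in Varagnolo--Vasserot \cite{VargnoloVasserot03} using the moduli description of generalized quiver varieties from \cite{Scherotzke}, which is precisely the strategy you flesh out. The steps you flag as the real work (lifting the correspondence $F_{\alpha,\beta}$ to the smooth varieties $\cm(v,w)$, proper base change on the resulting diagram, and the fibre-dimension count producing the shift by $d(v_2,w_2,v_1,w_1)-d(v_1,w_1,v_2,w_2)$) are indeed where the content lies, and the paper delegates all of them to the cited references rather than carrying them out.
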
 
Here the coefficient $d(v_1, w_1,v_2, w_2)$ is defined as in \cite{VargnoloVasserot03} by taking the Cartan matrix of the generalized quiver variety of \cite{Scherotzke} Section 4.

The previous result shows that the IC complexes appearing as direct summands of $\pi(v,w)$ are closed under $\Delta$.  We define the 
\emph{graded Grothendieck ring}, denoted here by $K^{gr}(\mod \cS)$, to be the free $\Z[t, t^{-1}]$--module with basis the IC sheaves appearing as direct summands of $\pi(v,w)$ for all $(v,w)$. Interpreting $t$ as the shift functor, the previous result 
shows that $\Delta$ induces a comultiplication structure on $K^{gr}(\mod \cS)$. Hence its $\Z[t, t^{-1}]$ dual $K^{gr *}(\mod \cS):= \Hom_{\Z[t, t^{-1}]}( K^{gr}(\mod \cS), \Z[t,t^{-1}])$ 
becomes a $\Z[t, t^{-1}]$--algebra. 

Let  $\cl(v,w)$ be the intersection cohomology complex associated to 
the stratum $\cm_0^{reg}(v,w) \subset \cm_0(w)$ with respect to the trivial local system. As $\cl(v,w)$ appears as a direct summand of $\pi(v,w)$,   the IC complex 
$\cl(v,w)$ is a generator of the free module $K^{gr}(\mod \cS)$. We denote $L(v,w)$ the corresponding element of the dual set of generators of $K^{gr *}(\mod \cS)$. 

\begin{lemma}\label{compatibility}
Let $i: \gpr \cS \to \mod \cS$ be the canonical embedding, then $i^*$ commutes with $\Delta$. As a consequence, the direct sum of shifted IC-sheaves appearing as direct summands of $i^* \pi(v,w)$ are mapped to direct sums of shifted $IC$-sheaves by $\Delta$. 
\end{lemma}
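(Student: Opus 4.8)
The plan is to reduce the compatibility of $i^*$ with the comultiplication $\Delta$ to a statement about the convolution diagrams, using the openness of $\gpr_w(\cS)$ inside $\rep(w,\cS)$ established in Lemma \ref{open}. First I would observe that, since $i: \gpr\cS \to \mod\cS$ is the inclusion of a full exact subcategory closed under extensions, subobjects and quotients \emph{within $\gpr\cS$}, the key point is the following: if $y \in F_{\alpha,\beta}$ has the property that the total module $y$ lies in $\gpr_{\alpha+\beta}(\cS)$, then both the submodule $y|_{W_0}$ and the quotient $y/y|_{W_0}$ lie in $\gpr\cS$ as well. Conversely, an extension of a Gorenstein projective $\cS$--module by a Gorenstein projective one is again Gorenstein projective, since $\gpr\cS$ is closed under extensions. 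Hence the convolution diagram restricts to a diagram of the $\gpr$--loci, i.e. we get
\[
\xymatrix{ \gpr_\alpha(\cS) \times \gpr_\beta(\cS) &   F_{\alpha,\beta} \cap \rep(\gpr\text{-locus}) \ar[l]^{\qquad\qquad\quad p}  \ar[r]_{q\qquad} & \gpr_{\alpha+\beta}(\cS)}
\]
and, crucially, $\gpr_w(\cS)$ is \emph{open} in $\rep(w,\cS)$ for every $w$ by Lemma \ref{open}, so all the restriction maps involved are open immersions.

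Next I would use base change. Let $j_w: \gpr_w(\cS) \hookrightarrow \rep(w,\cS)$ denote the open inclusion; then $i^*$ on the level of Grothendieck rings is induced by the pullback functors $j_w^*$. The comultiplication $\Delta = p_! q^*$ is built from a proper pushforward along $p$ (restricted from the projection) and a pullback along $q$. For an open immersion $j$, the functor $j^*$ commutes with $q^*$ trivially (both are just $*$--pullbacks, and the squares relating the two convolution diagrams are Cartesian because $\gpr$ is defined by an open condition on the total module, which pulls back along $q$ to the open condition defining $F_{\alpha,\beta}$ in the $\gpr$--locus). It also commutes with the proper pushforward $p_!$ by proper base change applied to the Cartesian square
\[
\xymatrix{
F_{\alpha,\beta}^{\gpr} \ar[r] \ar[d]_{p^{\gpr}} & F_{\alpha,\beta} \ar[d]^{p} \\
\gpr_\alpha(\cS)\times\gpr_\beta(\cS) \ar[r] & \rep(\alpha,\cS)\times\rep(\beta,\cS),
}
\]
which is Cartesian precisely because, as noted above, $y|_{W_0}$ and $y/y|_{W_0}$ both lie in $\gpr\cS$ if and only if $y$ does (the ``if'' direction gives one inclusion, closedness under extensions gives the other). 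Therefore $j_w^* \circ \Delta \cong \Delta \circ (j_\alpha^*, j_\beta^*)$, which is the assertion that $i^*$ commutes with $\Delta$.

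From this the ``as a consequence'' clause follows formally: $i^* \pi(v,w) = j_w^* \pi(v,w)$, and by Proposition \ref{pi} together with the compatibility just established, $\Delta(j_w^*\pi(v,w)) = j_\alpha^*\otimes j_\beta^*$ applied to $\Delta(\pi(v,w))$, which is a direct sum of external products of shifted IC--sheaves; restriction along an open immersion of a direct sum of shifted IC--sheaves is again a direct sum of shifted IC--sheaves (the open restriction of an IC sheaf is the IC sheaf of the restricted stratum, or zero), so the claim holds.

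The main obstacle I anticipate is verifying that the square above is genuinely Cartesian — that is, pinning down the scheme-theoretic (not just set-theoretic) statement that the $\gpr$ condition on $y\in F_{\alpha,\beta}$ is equivalent to the pair of $\gpr$ conditions on $(y|_{W_0}, y/y|_{W_0})$. The set-theoretic equivalence is immediate from $\gpr\cS$ being closed under subobjects-with-Gorenstein-projective-quotient and under extensions, but to invoke proper base change cleanly one wants the open subschemes to match up on the nose; here the openness from Lemma \ref{open} (proved via the tangent space computation, so the $\gpr$--locus is a union of connected components of a suitable scheme, hence reduced and open) is exactly what makes this go through. A secondary point to be careful about is the shift/normalization bookkeeping for the IC complexes, but since $i^*$ is an open restriction it commutes with shifts and preserves the perverse normalization up to the expected dimension shift, so no genuinely new coefficient appears beyond what is already recorded in Proposition \ref{pi}.
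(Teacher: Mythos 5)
Your overall strategy (restrict the convolution correspondence to the Gorenstein-projective loci, use the openness from Lemma \ref{open}, and conclude by base change) is the same as the paper's, but the way you set up the restricted correspondence contains a genuine error. You define the restricted variety as $F_{\alpha,\beta}\cap q^{-1}(\gpr_{\alpha+\beta}(\cS))$, i.e.\ you impose the Gorenstein-projective condition on the \emph{total} module $y$, and you then need the equivalence ``$y\in\gpr\cS$ if and only if $y|_{W_0}$ and $y/y|_{W_0}$ are both in $\gpr\cS$'' to make your square Cartesian. The ``if'' direction is closure under extensions, but the ``only if'' direction fails: $\gpr\cS$ is not closed under quotients. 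Lemma \ref{generators} gives an explicit counterexample, the non-split exact sequence $0\to P\to\res S_i^{\wedge}\to S_{\sigma(\Sigma^{-1}S_i)}\to 0$, in which the middle term and the submodule are Gorenstein projective while the simple quotient $S_{\sigma(\Sigma^{-1}S_i)}$ is not (the entire localization argument in Theorem \ref{iso} exists precisely because these simples lie outside $\gpr\cS$). Your appeal to closure under ``subobjects-with-Gorenstein-projective-quotient'' does not help, since what fails is exactly that the quotient need not be Gorenstein projective. With your definition the map $p$ does not even land in $\gpr_\alpha(\cS)\times\gpr_\beta(\cS)$, so the square you want to apply base change to is neither commutative nor Cartesian.

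The fix is to orient the construction the other way, as the paper does: set $F_{\alpha,\beta}(\gpr\cS):=p^{-1}\bigl(\gpr_\alpha(\cS)\times\gpr_\beta(\cS)\bigr)$, imposing the condition on the sub and the quotient. The left-hand square is then Cartesian by construction, so smooth (open) base change gives $(i\times i)^*p_!=p_!\,i^*$; closure of $\gpr\cS$ under extensions is used only to check that $q$ carries $F_{\alpha,\beta}(\gpr\cS)$ into $\gpr_{\alpha+\beta}(\cS)$, which makes the right-hand square commute, and commutativity alone suffices to exchange $i^*$ with $q^*$. The remaining ingredients of your argument --- openness of $\gpr_w(\cS)$ from Lemma \ref{open}, the fact that restriction along an open immersion sends shifted IC complexes to shifted IC complexes, and the deduction of the final clause from Proposition \ref{pi} --- agree with the paper and are fine.
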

\begin{proof}
As $\gpr_w( \cS)$ is an open variety in $\rep(w,\cS)$ by Lemma \ref{open}, the embedding $i$ is a smooth map and $i^*$ maps IC  complexes to IC complexes. Hence, $i^*\pi(v,w)$ is a direct sum of shifts of IC complexes. Let 
$$F_{\alpha, \beta}(\gpr \cS) := \{ y \in  F_{\alpha, \beta} \text{ and } y|_{W_0}, y/ y|_{W_0}  \in \gpr \cS \}$$
 be the sub variety of $F_{\alpha, \beta}$. 
 Then we obtain the following convolution diagram, where the left hand side square is a fibre product, as $\gpr \cS$ is closed under extension  
\[
\xymatrix{ \gpr_{\alpha} \cS \times \gpr_{\beta} \cS \ar[d]_{i \times i}    &   F_{\alpha, \beta}( \gpr \cS)  \ar[l]^{p} \ar[d]_{i}  \ar[r]_{q}& \gpr_{\alpha+\beta} \cS \ar[d]_i \\
 \rep(\alpha, \cS) \times \rep(\beta, \cS ) &   F_{\alpha, \beta} \ar[l]^{\ \ \  \ \  \ p}  \ar[r]_{q \ \ \ } &\rep(\alpha+\beta, \cS )}
\]
We find that $$\Delta(i^* F)=p_! q^{*} (i^* F)= p_! i^* q^*F = (i \times i)^*p_! q^{*} F=(i \times i)^* \Delta (F)$$ for all $F \in D^b(\gpr_{\alpha+ \beta} \cS)$ by smooth base change. This shows that $\Delta$ commutes with $i^*$ and concludes the proof.  

\end{proof}

\section{Geometric realizations of the quantum group}

Let now $\cS$ be the singular Nakajima category that we considered in Example \ref{exa2}. That is, $F=\Sigma^2$ and the configuration consists of the simple $kQ$--modules $S_i$ and their shifts $\Sigma S_i$ seen as objects of $\cp \cong \cd_Q/  \Sigma^2$.  We showed in Theorem  \ref{nakajima category} that with this choice of configuration
$ \gpr( \cS)$ is equivalent to the category $Comp_{\Z/ 2 }(\proj \F_q Q )$ of two-cyclic complexes considered by Bridgeland in \cite{Bridgeland13}. 
Bridgeland proves that the quantum group $U_t(\cg)$ is isomorphic to the twisted semi-derived Hall algebra of $Comp_{\Z/ 2 }(\proj \F_q Q )$. 

In this Section we explain two alternative approaches to the quantum group that are closer in spirit to Lusztig's work \cite{Lusztig90}. One is due to Qin \cite{Qin14}, the other is our own. By Theorem \ref{iso} our construction is isomorphic to Qin's, but it has the important advantage of allowing a direct comparison with Bridgeland's semi-derived Hall algebra. We start by explaining our construction, and then will recall Qin's. 

\subsection{A new geometric construction of the quantum group} 

Let $M$ be an object 
 in $Comp_{\Z/ 2 }(\proj \F_q Q ) \cong \gpr \cS$. Denote 
 $\mathcal{L}(M)$ the IC complex on the stratum of the Nakajaima quiver variety that contains $M$. The IC complex $\cl(M)$ defines an element of $K^{gr }(\gpr \cS)$. We denote $L(M)$ the dual element in the dual graded Grothendieck ring, $L(M) \in K^{gr *}(\gpr \cS)$. 
%
Bridgeland's semi-derived Hall algebra is a localization of the  Hall algebra of $Comp_{\Z/ 2 }(\proj \F_q Q )$, and we shall  
localize $K^{gr *}(\gpr \cS)$ 
in a similar way. Namely, consider the ring 
$$K^{gr *}(\gpr \cS) [L(\sigma(S_i)^{\wedge}) ,L(\sigma(\Sigma S_i)^{\wedge} )]^{-1}.$$ 
The assignment $$M \in Comp_{\Z/ 2 }(\proj \F_q Q ) \mapsto L(M) \in K^{gr *}(\gpr \cS)$$ extends to  a linear map 
$$
L: SH_q \rightarrow K^{gr *}(\gpr \cS) [L(\sigma(S_i)^{\wedge}) ,L(\sigma(\Sigma S_i)^{\wedge} )]^{-1}.$$

It will follow from Theorem \ref{iso} that this map is actually compatible with the product structures. 
Let $E_i, F_i , K_i \in SH_q$ be the elements corresponding to the generators of the Quantum group, see \cite{Bridgeland13}. 
We focus on the sub-algebra of  
$K^{gr *}(\gpr \cS) [L(\sigma(S_i)^{\wedge}) ,L(\sigma(\Sigma S_i)^{\wedge} )]^{-1}$ generated by  $L(E_i), L(F_i) , L(K_i)$ and 
$L(K_i^{-1})$: 


\begin{itemize}\label{generator of QG}
\item 
$E_i = (q-1) \prod_{j}[ \sigma(S_j)^{\wedge}]^{- a_{ij} }[  S_i^{\wedge}]    \mapsto L(E_i) =$\\ $(q-1) \prod_{j}L(\sigma(S_j)^{\wedge})^{-  a_{ij}}L(S_i^{\wedge})$
\item $F_i =(q-1)^{-1}t \prod_{j}   [ \sigma(\Sigma S_j)^{\wedge}]^{-a_{ij}} [ \Sigma S_i^{\wedge}]  \mapsto L(F_i)  =$ \\ $= (q-1)^{-1}t \prod_{j}   L( \sigma(\Sigma S_j)^{\wedge})^{-a_{ij}} L( \Sigma S_i^{\wedge}),$ 
\item $K_i = [ \sigma(S_i)^{\wedge}]  \mapsto L(K_i) =  L( \sigma(S_i)^{\wedge})$ 
\item $  K_i^{-1} =  [ \sigma(S_i)^{\wedge}]   \mapsto L(K_i^{-1}) =  L(\sigma(\Sigma S_i)^{\wedge}).$
\end{itemize}

Here $a_{ij}:= \dim \Ext^1(S_i, S_j)=\dim \Hom(\rad P_i, S_j)$.   We denote this subalgebra 
$\cS \ch_{\gpr \cS}$. It is a consequence of Theorem \ref{iso} that  
$\cS \ch_{\gpr \cS}$, after twisting, recovers the  quantum group. 

\subsection{Qin's construction of the quantum group} 
\label{sec:qin}
We give a brief survey  of Qin's results. We refer to \cite{Qin14} for more details. Instead of 
$K^{gr *}(\gpr \cS)$, Qin works with $K^{gr *}(\mod \cS)$ and its localizations.  
If $M$ is in $\mod \cS$, let $\cl(M)$ be the IC complex associated to the stratum containing $M$, $\cl(M) \in K^{gr}(\mod \cS)$. 
Denote $L(M) \in K^{gr*}(\mod \cS)$ the dual of $\cl(M)$. Let $\cl(v_i,w_i)$ and $\cl(v'_i,w_i)$ be the IC complexes on $\cm_0^{reg}(v_i, w_i)$ and $\cm_0^{reg}(v_i', w_i)$,  where   
$$ w_i:=e_{\sigma(S_i)}+e_{\sigma(\Sigma S_i)}, \quad v_i(z):= \dim {\cp} (z, S_i), \quad v_i'(z):= \dim {\cp} (z ,\Sigma S_i)$$ 
for all $z \in \cp_0$. 
Denote $L(v_i,w_i)$ and $L(v'_i,w_i)$ their duals in  $K^{gr *}(\mod \cS)$. 
We refer to \cite{Qin14} for the definition $\tilde U_t(\cg)$, and its presentation in terms of the generators 
$E_i$, $F_i$, $H_i$ and 
$H_i'$. Recall that, setting $H_i^{-1}=H_i'$ and $H_i = K_i$, one recovers the quantum group $U_t(\cg)$.

\begin{remark}
Qin's starting point are all cyclic quiver varieties. These are obtained as the moduli spaces of a singular Nakajima category with a configuration consisting of all $\cp$--modules. 
However the generators $L(v,w)$ that Qin considers have support $w$ only in the simple $kQ$--modules $S_i$ and their one shifts $\Sigma S_i$. Hence we can restrict to the cyclic quiver varieties realized as moduli spaces of $\cS$. 
\end{remark}


\begin{theorem}\cite{Qin14}\label{qin}
There is an algebra embedding
$$\tilde U_t(\cg)[H_i, H_i']^{-1} \hookrightarrow  K^{gr *}(\mod \cS)^{tw}[L(v_i,w_i) ,L(v'_i,w_i)]^{-1}$$ induced by 


\begin{itemize}

\item $E_i \mapsto L(S_{\sigma S_i})$
\item $F_i \mapsto L(S_{\sigma \Sigma S_i})$ 
\item $H_i \mapsto L(v_i,w_i)$
\item $H_i' \mapsto L(v_i',w_i)$.
\end{itemize}

\end{theorem}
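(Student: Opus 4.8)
The plan is to deduce this from Qin's theorem \cite{Qin14}, of which the statement above is a reformulation through the dictionary between cyclic affine quiver varieties and moduli of $\cS$--modules. The first step is to make that dictionary precise: by \cite{Scherotzke} the cyclic affine quiver variety $\cm_0(w)$ is identified with $\rep(w,\cS)$, its decomposition $\cm_0(w)=\sqcup\,\cm_0^{reg}(v,w)$ with the stratification cut out by the functor $CK$, and the IC complexes $\cl(v,w)$ with the distinguished generators of $K^{gr}(\mod\cS)$; the convolution diagram recalled above equips the dual $K^{gr*}(\mod\cS)$ with its Hall-type product. I would then check that the objects named in the statement match Qin's chosen generators: that $S_{\sigma(S_i)}$ and $S_{\sigma(\Sigma S_i)}$ are the constant sheaves on the one-point strata with dimension vectors $e_{\sigma(S_i)}$ and $e_{\sigma(\Sigma S_i)}$, and that $\cm_0^{reg}(v_i,w_i)$ and $\cm_0^{reg}(v_i',w_i)$ are exactly Qin's $H_i$-- and $H_i'$--strata. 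The computation $CK(S_{\sigma(S_i)})=(\Sigma S_i)^{\wedge}$ from \cite{Scherotzke} (used in Lemma~\ref{transversal}) together with the definitions $v_i(z)=\dim\cp(z,S_i)$, $v_i'(z)=\dim\cp(z,\Sigma S_i)$ pins these down, and the Remark preceding the theorem reduces Qin's setup, where the configuration consists of all $\cp$--modules, to the present one.

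Granting this translation, the substance is Qin's argument, whose shape I would reproduce as follows. Lusztig's perverse-sheaf construction \cite{Lusztig90}, \cite{Lusztig91} realizes $U_t(\cn^+)$ and $U_t(\cn^-)$ as convolution algebras; the Hernandez--Leclerc embedding \cite{HernandezLeclerc11} places these inside the graded quiver variety, and after passing to the cyclic affine picture they become the subalgebras of $K^{gr*}(\mod\cS)$ generated by the $L(S_{\sigma(S_i)})$, respectively by the $L(S_{\sigma(\Sigma S_i)})$. The convolution product on the IC classes that occur is computed via the decomposition theorem exactly as in \cite{VargnoloVasserot03}, of which Proposition~\ref{pi} is a special case; this recovers the quantum Serre relations on each side. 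The Cartan-type classes $L(v_i,w_i)$ and $L(v_i',w_i)$ satisfy the torus relations and, after localizing at them, act on $E_j$ and $F_j$ with the correct weights, and the mixed relations $[E_i,F_j]=\delta_{ij}(H_i-H_i')/(t-t^{-1})$ are verified by a direct analysis of the rank-one and rank-two cyclic quiver varieties, following \cite{Qin14}. Assembling the three pieces yields the algebra map, and its injectivity comes from the linear independence of the images of PBW-type monomials, which are distinct elements of the dual basis of IC classes up to invertible scalars coming from the twist and the localization.

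The genuine mathematical difficulty lives in that last paragraph --- the decomposition-theorem computation of the convolution products that produces the defining relations --- and this is exactly the part we import from \cite{Qin14} rather than redo. From the point of view of the present paper, the remaining obstacle is purely a matter of normalization: one must reconcile the twist defining $K^{gr*}(\mod\cS)^{tw}$, built from the Euler/Cartan form of the generalized quiver variety in the manner of \cite{VargnoloVasserot03}, with the twist Qin uses; match the two localizing sets; and --- the point flagged in the excerpt just above --- track Auslander--Reiten translates and shifts carefully, since our labelling of the vertices $\sigma(c)$ differs slightly from that of \cite{Scherotzke} and from Qin's. One has to be sure in particular that $S_{\sigma(S_i)}$ really maps to $E_i$ and $S_{\sigma(\Sigma S_i)}$ to $F_i$, and not to a shifted or swapped variant. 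With these identifications fixed the theorem is precisely Qin's, and it is this normalization that makes the comparison with $\cS \ch_{\gpr\cS}$ in Theorem~\ref{iso}, hence Theorem~\ref{main1}, possible.
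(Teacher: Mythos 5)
Your proposal is correct and takes essentially the same approach as the paper: Theorem~\ref{qin} is quoted directly from \cite{Qin14} with no proof supplied, the only content on the paper's side being the translation of Qin's cyclic quiver varieties and generators into the language of $\mod \cS$ (via the preceding Remark and the definitions of $v_i$, $w_i$, $L(v,w)$), which is exactly the dictionary you spell out. Your reconstruction of Qin's internal argument in the second paragraph is plausible but is not something the paper verifies or needs; like you, it imports that part as a black box.
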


\begin{definition}
Denote $\cS \ch_{\mod \cS}$ the sub-algebra of  
$$
K^{gr *}(\mod  \cS)^{tw}[L(v_i,w_i) ,
L(v'_i,w_i)]^{-1}
$$ given by the image of  $\tilde U_t(\cg)[H_i, H_i']^{-1}$. 
\end{definition}

\subsection{The proof of the main theorem}  
Let $\cm^{reg}_0(v_i, w_i)$ and $\cm^{reg}_0(v_i', w_i)$ be the strata we considered in Section in \ref{sec:qin}. By \cite{Qin14} they are transversal to the zero stratum. Therefore Lemma \ref{transversal} yields the following result.

\begin{corollary}\label{transversal generator}
The strata containing $\sigma(S_i)^{\wedge}$ and $\sigma(\Sigma S_i)^{\wedge}$ 
are 
transversal to $\cm^{reg}_0(v_i, w_i)$ and $\cm^{reg}_0(v_i', w_i)$. 
Furthermore, the strata containing $\res S_i^{\wedge} $ and $\res \Sigma S_i^{\wedge}$  
 are transversal to the strata containing 
$S_{\sigma \Sigma S_i}$ and $S_{\sigma S_i}$.  
\end{corollary}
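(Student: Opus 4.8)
The plan is to derive this from Lemma \ref{transversal} together with the transitivity of the transversality relation. Recall that, by definition, two strata are transversal exactly when the stratification functor $CK$ takes isomorphic values on them; hence transversality is an equivalence relation on the strata of $\cm_0(w)$, and in particular a stratum is transversal to the zero stratum precisely when $CK$ vanishes on it.

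For the first assertion I would proceed as follows. By Lemma \ref{phi} the projective $\sigma(S_i)^{\wedge}$ is bistable, so $CK$ vanishes on the stratum containing $\res \sigma(S_i)^{\wedge}$; equivalently, by the first part of Lemma \ref{transversal}, this stratum is transversal to the zero stratum, and likewise for $\res \sigma(\Sigma S_i)^{\wedge}$. On the other hand $\cm^{reg}_0(v_i, w_i)$ and $\cm^{reg}_0(v_i', w_i)$ are transversal to the zero stratum by \cite{Qin14}. Transitivity of transversality then yields that the stratum containing $\sigma(S_i)^{\wedge}$ is transversal to $\cm^{reg}_0(v_i, w_i)$ and the stratum containing $\sigma(\Sigma S_i)^{\wedge}$ is transversal to $\cm^{reg}_0(v_i', w_i)$.

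For the second assertion, the transversality of the stratum containing $\res S_i^{\wedge}$ with the stratum containing $S_{\sigma(\Sigma S_i)}$ is exactly the second part of Lemma \ref{transversal}. To obtain the companion statement, I would apply Lemma \ref{transversal} with $S_i$ replaced by $\Sigma S_i$ (which is again one of the objects of the configuration $C$): this shows that the stratum containing $\res \Sigma S_i^{\wedge}$ is transversal to the stratum containing $S_{\sigma(\Sigma(\Sigma S_i))}$, and since $F = \Sigma^2$ we have $\Sigma^2 S_i \cong S_i$ in $\cp$, so $\sigma(\Sigma(\Sigma S_i)) = \sigma(S_i)$ and the claim follows. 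Equivalently, combining Lemma \ref{phi} with Theorem 4.5 of \cite{Scherotzke} one checks directly that $CK$ takes the common value $(\Sigma S_i)^{\wedge}_{\cp}$ on the stratum of $\res \Sigma S_i^{\wedge}$ and on the stratum of $S_{\sigma S_i}$.

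The only step requiring genuine care is the compatibility of the two formulations of transversality in play: one must know that the transversality of Qin's strata to the zero stratum, proved in \cite{Qin14} in terms of the geometric definition recalled in 3.3.2 of \cite{Nakajima01}, coincides with the vanishing of $CK$ there. Granted this identification — which is built into the interpretation of $CK$ as the stratification functor — the corollary is a formal consequence of the results already established.
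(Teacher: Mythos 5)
Your proposal is correct and follows essentially the same route as the paper, which derives the corollary in one line from Lemma \ref{transversal} together with the fact (from \cite{Qin14}) that $\cm^{reg}_0(v_i, w_i)$ and $\cm^{reg}_0(v_i', w_i)$ are transversal to the zero stratum, transversality being an equivalence relation since it is defined by isomorphic images under $CK$. Your expansion --- in particular the shift-by-$\Sigma$ argument for the companion statement using $\Sigma^2 S_i \cong S_i$ in $\cp$, and the remark that Qin's geometric transversality must be matched with the $CK$-vanishing criterion --- supplies details the paper leaves implicit but introduces no new method.
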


\begin{lemma}\label{generators}
There is a non-split exact sequence 

$$ 0\to P \to \res   S_i ^{\wedge}  \to S_{\sigma(\Sigma^{-1} S_i)} \to 0 ,$$
where $P$ denotes the projective $\cS$--module $\bigoplus_{c \in C-\Sigma^{-1} S_i}  \cp( S_i, \Sigma c) \otimes \sigma(c)^{\wedge}$.
\end{lemma}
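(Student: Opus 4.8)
The plan is to analyze the projective $\cS$-module $\res S_i^\wedge$ directly via the restriction functor and the structure theory recalled in Section~\ref{Nakajima categories}. First I would recall that $S_i^\wedge = \cR(?, S_i) \in \proj \cR$ and that $\res S_i^\wedge$ is the $\cS$-module $c \mapsto \cR(\sigma(c), S_i)$ for $c \in C$. The key computation is to identify $\cR(\sigma(c), S_i)$ with $\cp(S_i, \Sigma c)$, or rather with $\cd_Q$-morphisms between the lifts, using the mesh relations and the explicit arrows $c \to \sigma(c) \to \tau^{-1}c$ in $\Z Q_C$. This identification (essentially the content of the proof of Theorem~\ref{equivalence} and Proposition~3.16 of \cite{Scherotzke}) should show that $\res S_i^\wedge$ has a large ``projective part'' matching $P := \bigoplus_{c \in C - \Sigma^{-1}S_i} \cp(S_i, \Sigma c) \otimes \sigma(c)^\wedge$, together with exactly one extra one-dimensional piece supported at the vertex $\sigma(\Sigma^{-1} S_i)$.

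\textbf{Key steps.} First, I would establish the inclusion $P \hookrightarrow \res S_i^\wedge$: the summand $\sigma(c)^\wedge$ of $\proj\cS$ maps to $\res S_i^\wedge$ by choosing, for each basis element of $\cp(S_i, \Sigma c)$, the corresponding morphism $\sigma(c)^\wedge \to \res S_i^\wedge$ in $\mod \cS$, which exists by Yoneda since $\Hom_{\cS}(\sigma(c)^\wedge, \res S_i^\wedge) = (\res S_i^\wedge)(\sigma(c)) = \cR(\sigma(c), S_i)$. Second, I would compute the cokernel: by counting dimensions vertex by vertex (using $\dim \cR(\sigma(c), S_i)$ versus $\dim \cp(S_i, \Sigma c)$), the quotient is one-dimensional, concentrated at the vertex $\sigma(\Sigma^{-1}S_i)$ — the discrepancy comes precisely from the arrow $\sigma(\Sigma^{-1}S_i) \to S_i$ (i.e. $\sigma(c) \to \tau^{-1}c$ with $\tau^{-1}c = S_i$, so $c = \tau S_i$; one needs to reconcile this with $\Sigma^{-1}S_i$ via the $\tau$–$\Sigma$ relation in $\cd_Q$, using that $\tau = \Sigma^{-1}$ composed with the Serre/Coxeter functor, or directly that in $\cd_Q$ one has the relevant identification). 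Hence the cokernel is $S_{\sigma(\Sigma^{-1}S_i)}$. Third, I would check the sequence is non-split: if it split, $\res S_i^\wedge$ would be projective, contradicting Theorem~5.5 of \cite{Scherotzke} which says $\res S_i^\wedge$ is a non-projective indecomposable in $\gpr\cS$ (alternatively, $\Ext^1(S_{\sigma(\Sigma^{-1}S_i)}, P) \neq 0$ can be seen from the arrow structure of $Q_\cS$).

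\textbf{Main obstacle.} The delicate point is the bookkeeping that pins down \emph{exactly which} summand $\sigma(c)^\wedge$ is ``missing'' from $P$ and matches the support of the cokernel — i.e. verifying that it is $c = \Sigma^{-1}S_i$ and not some $\tau$-twist thereof. This requires being careful about the two competing conventions (the arrow $\sigma(c) \to \tau^{-1}c$ versus the labelling of $C$ by shifts $\Sigma^j S_i$) and correctly translating between $\tau^{-1}$ and $\Sigma$ on $\ind\cd_Q$; the excerpt itself flags a convention subtlety in the proof of Lemma~\ref{transversal} (``note that our convention is slightly different''). Once the vertex is correctly identified, everything else is a routine dimension count plus an appeal to the already-established indecomposability and non-projectivity of $\res S_i^\wedge$.
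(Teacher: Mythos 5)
There is a genuine gap. The paper's proof is not a direct computation of $\res S_i^{\wedge}$ at all: it takes the two exact sequences supplied by Lemmas 3.13 and 3.14 of \cite{Scherotzke}, namely the presentation $0 \to \res \Sigma^{-1}S_i^{\wedge} \to P \oplus \sigma(\Sigma^{-1}S_i)^{\wedge} \to \res S_i^{\wedge} \to 0$ and the radical sequence $0 \to \res \Sigma^{-1}S_i^{\wedge} \to \sigma(\Sigma^{-1}S_i)^{\wedge} \to S_{\sigma(\Sigma^{-1}S_i)} \to 0$, assembles them into a $3\times 3$ diagram, and reads off the desired sequence as the rightmost column via the Snake lemma. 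Your plan replaces this with a Yoneda construction plus a dimension count, and both halves of that plan break down. First, the identification $\cR(\sigma(c),S_i) \cong \cp(S_i,\Sigma c)$ is false: $\cp(S_i,\Sigma c)$ is the multiplicity of $\sigma(c)^{\wedge}$ in the projective cover of $\res S_i^{\wedge}$ (equivalently of $S_{\sigma(c)}$ in its top), not the full space $\Hom_{\cS}(\sigma(c)^{\wedge},\res S_i^{\wedge}) = (\res S_i^{\wedge})(\sigma(c))$. Already in Example \ref{exa2} with $i=2$ one has $\dim \res S_2^{\wedge} = 3$ while $\sum_{c}\dim\cp(S_2,\Sigma c)=2$, so the two collections of numbers cannot agree vertex by vertex. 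Relatedly, your ``vertex by vertex'' comparison of $\dim\cR(\sigma(c),S_i)$ with $\dim\cp(S_i,\Sigma c)$ is not a comparison of $\res S_i^{\wedge}$ with $P$: the value of $P$ at $\sigma(c')$ is $\bigoplus_{c}\cp(S_i,\Sigma c)\otimes \cS(\sigma(c'),\sigma(c))$, not $\cp(S_i,\Sigma c')$, so the count as described does not compute the cokernel.

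The deeper missing idea is injectivity of $P \to \res S_i^{\wedge}$, which you assert (``establish the inclusion'') but never prove. A map out of a sum of projectives built by choosing basis elements of $\Hom$-spaces is generically very far from injective, and here the correct statement is delicate: the \emph{full} sum $\bigoplus_{c}\cp(S_i,\Sigma c)\otimes\sigma(c)^{\wedge} = P\oplus\sigma(\Sigma^{-1}S_i)^{\wedge}$ maps \emph{onto} $\res S_i^{\wedge}$ with nonzero kernel $\res\Sigma^{-1}S_i^{\wedge}$, and the reason $P$ alone injects is that this kernel lies entirely inside the summand $\sigma(\Sigma^{-1}S_i)^{\wedge}$, being exactly its radical. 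That is precisely the content of the two cited lemmas and of the Snake lemma step in the paper; it is not recoverable from dimension bookkeeping. Your non-splitting argument also misfires slightly (splitting would give $\res S_i^{\wedge}\cong P\oplus S_{\sigma(\Sigma^{-1}S_i)}$, which contradicts indecomposability of $\res S_i^{\wedge}$ when $P\neq 0$, not projectivity), though that point is minor compared to the two gaps above.
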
 

\begin{proof}
The claim follows from the  commutative diagram below, where the last two rows are exact sequences by Lemma 3.13 and 3.14 of  \cite{Scherotzke}. 
\[ 
\xymatrix{    0 \ar[d] \ar[r] & P  \ar[r]_{\id} \ar[d] & P \ar[d]  \\
 \res \Sigma^{-1} S_i ^{\wedge} \ar[r] \ar[d]_{\id}  & P \oplus  \sigma^{\wedge}(\Sigma^{-1} S_i)  \ar[r] \ar[d] & \res  S_i ^{\wedge} \ar[d] \\
 \res \Sigma^{-1} S_i^{\wedge}  \ar[r] &  \sigma^{\wedge}(\Sigma^{-1} S_i)  \ar[r] & S_{\sigma( \Sigma^{-1} S_i)} . 
}\]
The rightmost column is a short exact sequence by the Snake lemma.  
\end{proof} 
By the transversal slice Theorem 3.3.2 of  \cite{Nakajima01}, we know that the direct summands appearing in $\pi(v,w)$  are 
the shifts of sheaves $\cl(v',w)$ with $v' \le v$. 
Combining this with Proposition \ref{pi} we know that $$L(v,w) L(v',w') = \sum_{v'' \ge v+v'} a_{v''} L(v'', w+w')$$ where 
$a_{v''} \in \N[t^{\pm}]$ and $a_{v_1+v_2}= t^{d(v_2, w_2,v_1, w_1)  -   d(v_1, w_1,v_2, w_2)}$.

Note that, up to isomorphism, there exists exactly one indecomposable $\cR$--module with dimension vector $(v_i, w_i)$ and $(v_i',w_i)$ respectively. We will denote these modules   $M_i$ and $M_i'$ respectively.

\begin{lemma}\label{cartan}
The modules $\sigma(S_i)^{\wedge}$ have a filtration by 
$ M_{i_1}, \cdots, M_{i_n}$,  corresponding to the order of the simple $\C Q$--modules $S_{i_1}, \ldots , S_{i_n}$ 
appearing in a composition series of the projective module $P_i$. 
It follows that $$ L(v_{i_1}, w_{i_1}) \cdots  L(v_{i_n}, w_{i_n}) =L( \sigma(S_i)^{\wedge}).$$ 
\end{lemma}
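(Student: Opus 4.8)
The strategy is to exhibit an explicit filtration of the projective-injective $\cS$--module $\sigma(S_i)^\wedge$ whose subquotients are the indecomposable $\cR$--modules $M_{i_1},\dots,M_{i_n}$ determined by a composition series $S_{i_1},\dots,S_{i_n}$ of $P_i$, and then to transport this to the Grothendieck-ring statement via Proposition \ref{pi} and the transversal-slice description of the $L(v,w)$-product recalled just above. First I would recall that $\sigma(S_i)^\wedge$ is bistable (Lemma \ref{phi}) so $K_{LR}\res\sigma(S_i)^\wedge\cong\sigma(S_i)^\wedge$ and its dimension vector is $(v,w)$ with $w=e_{\sigma(S_i)}$ and $v(z)=\dim\cp(z,S_i)=\dim\cd_Q(z,S_i)$ on the regular vertices; this is exactly the sum $\sum_k(v_{i_k},w_{i_k})$ once one observes that $w_{i_k}=e_{\sigma(S_{i_k})}$ forces the composition series of $P_i$ to be recorded in the $\sigma$-part, and that $v(z)=\dim\cd_Q(z,S_i)=\sum_k\dim\cd_Q(z,S_{i_k})$ because the simple $kQ$--modules $S_{i_k}$ are precisely the composition factors of $P_i$ and $\dim\cd_Q(z,-)$ is additive on short exact sequences of $kQ$--modules. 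So the dimension vectors already match; the real content is the \emph{filtration}.

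\textbf{Key steps.} (1) Fix a composition series $0=N_0\subset N_1\subset\dots\subset N_n=P_i$ with $N_k/N_{k-1}\cong S_{i_k}$. Applying the fully faithful embedding $kQ\hookrightarrow\cp\cong\cd_Q/\Sigma^2$ and then the Yoneda/representable functor $c\mapsto c^\wedge$, one gets a tower of $\cR$--modules; the point is to check that the successive quotients are the \emph{bistable} indecomposables $M_{i_k}$ of dimension vector $(v_{i_k},w_{i_k})$, not merely modules with that dimension vector. I would do this inductively, using that $\sigma(S_i)^\wedge$ and each $M_{i_k}$ arise as $K_{LR}$ of representable $\cS$--modules, together with the structural exact sequences of Lemma 3.13 and 3.14 of \cite{Scherotzke} (the same ones powering Lemma \ref{generators}); concretely, the inclusion $N_{k-1}\hookrightarrow N_k$ induces a map of representables, and comparing kernels/cokernels under $K_{LR}$ produces the short exact sequence $0\to(\text{step }k-1\text{ of the tower})\to(\text{step }k)\to M_{i_k}\to 0$. (2) Once the filtration of $\sigma(S_i)^\wedge$ with factors $M_{i_1},\dots,M_{i_n}$ is in hand, I invoke the product formula $L(v,w)L(v',w')=\sum_{v''\ge v+v'}a_{v''}L(v'',w+w')$ with leading coefficient $t^{d(\dots)-d(\dots)}$: iterating, the product $L(v_{i_1},w_{i_1})\cdots L(v_{i_n},w_{i_n})$ is a sum over $v''\ge\sum v_{i_k}$ of terms $a_{v''}L(v'',\sum w_{i_k})$. (3) Since $\sum(v_{i_k},w_{i_k})=(v,w)$ is the dimension vector of the \emph{bistable} module $\sigma(S_i)^\wedge$, which lies in the zero stratum (it is projective-injective in $\mod\cR$, hence $CK$ vanishes on it), the stratum $\cm_0^{reg}(v,w)$ is the minimal one in $\cm_0(w)$, so there is no room for strictly larger $v''$ with the same $w$; the transversal-slice description forces the sum to collapse to the single term $L(v,w)=L(\sigma(S_i)^\wedge)$ with coefficient $1$. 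Tracking the twist exponents, the accumulated $t$-powers telescope to $0$ by the cocycle-type identity satisfied by $d(-,-,-,-)$ (additivity of the coefficient under splitting dimension vectors), which is built into the definition borrowed from \cite{VargnoloVasserot03}; alternatively one checks $d$ vanishes on the relevant pairs because the strata are transversal to the zero stratum.

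\textbf{Main obstacle.} The delicate point is Step (1): showing the subquotients of the natural tower are exactly the indecomposables $M_{i_k}$ rather than arbitrary modules with the prescribed dimension vector. The product formula only controls leading terms up to the stratification order, so if a subquotient were, say, a non-trivial extension or a decomposable module in a larger stratum, the argument for the collapse in Step (3) could fail. I expect to resolve this by exploiting that the whole tower lives inside the bistable (equivalently $K_{LR}$-fixed) locus — $\sigma(S_i)^\wedge$ is bistable and submodules/quotients arising from $K_{LR}$ of representables are again of that form — combined with the fact, from Lemma \ref{transversal} and Corollary \ref{transversal generator}, that the $M_{i_k}$ are rigid and uniquely determined in their strata. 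A clean way to package this is to first establish the short exact sequence $0\to M_{i_1}\to\sigma(S_i)^\wedge\to(\text{quotient})\to 0$ and identify the quotient with $\sigma(S_i/S_{i_1})^{\wedge}$-type module by induction on the length of $P_i$, so the whole claim reduces to the length-one case where $P_i=S_i$ is simple and $\sigma(S_i)^\wedge=M_i$ tautologically.
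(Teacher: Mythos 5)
Your overall strategy is the same as the paper's: place $\sigma(S_i)^{\wedge}$ in the stratum indexed by $\sum_k (v_{i_k},w_{i_k})$ and then collapse the product $L(v_{i_1},w_{i_1})\cdots L(v_{i_n},w_{i_n})$ using the formula $L(v,w)L(v',w')=\sum_{v''\ge v+v'}a_{v''}L(v'',w+w')$ together with the fact that the stratum of $\sigma(S_i)^{\wedge}$ vanishes under $CK$, so no terms with strictly larger $v''$ survive. The gap is in the step where you claim ``the dimension vectors already match.'' First, $w\neq e_{\sigma(S_i)}$: under the identification of $\gpr\cS$ with $2$-cyclic complexes, $\sigma(S_i)^{\wedge}$ corresponds to $P_i\xrightarrow{\id}P_i$ and one computes $w(\sigma(S_j))=w(\sigma(\Sigma S_j))=\dim\Hom(P_j,P_i)=[P_i:S_j]$, which is exactly why $w=\sum_k w_{i_k}$ holds; this computation is the content of the paper's first step. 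Second, and more seriously, your justification of $v=\sum_k v_{i_k}$ rests on the claim that $\dim\cd_Q(z,-)$ (equivalently $\dim\cp(z,-)$) is additive on short exact sequences of $kQ$--modules. It is not: for $z$ an indecomposable $kQ$--module (resp.\ its shift) this dimension is $\dim\Hom(z,-)$ (resp.\ $\dim\Ext^1(z,-)$), and only their difference, the Euler form, is additive. Concretely, for $Q=A_2\colon 1\to 2$ and the sequence $0\to S_2\to P_1\to S_1\to 0$ one has $\dim\Hom(S_1,P_1)=0$ while $\dim\Hom(S_1,S_1)+\dim\Hom(S_1,S_2)=1$. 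The paper's route around this is the one you are missing: the iterated extension only yields $w=\sum_k w_{i_k}$ and $v\ge\sum_k v_{i_k}$, and equality of the $v$-parts is then forced because both $(v,w)$ and $(\sum_k v_{i_k},w)$ vanish under $CK$ and the Cartan matrix is injective on dimension vectors of indecomposables in $\gpr\cS$. Without that input the leading term of your product is $L(\sum_k v_{i_k},\sum_k w_{i_k})$, which you have not identified with $L(\sigma(S_i)^{\wedge})$.

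On your Step (1): you are right that the filtration is the crux, but the induction you propose does not close. The quotient of $\sigma(S_i)^{\wedge}$ by the first step of the tower is not a ``$\sigma(P_i/N_1)^{\wedge}$-type'' object, because $P_i/N_1$ is not projective and the configuration $C$ only attaches representable objects $\sigma(c)^{\wedge}$ to the shifted simples; so there is no smaller representable to which the inductive hypothesis applies. The paper avoids constructing the tower explicitly: it verifies the dimension-vector identity directly in the $2$-cyclic complex model via $\dim\Hom(P_j,P_i)=[P_i:S_j]$, notes that $\sigma(S_i)^{\wedge}$ is bistable with $K_{LR}\res\sigma(S_i)^{\wedge}\cong\sigma(S_i)^{\wedge}$ so that its stratum is $\gdim\sigma(S_i)^{\wedge}$, and then runs the $CK$/Cartan argument above. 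If you want to keep your more explicit filtration, you must either produce the subquotients as genuine $\cR$--submodules of $\sigma(S_i)^{\wedge}$ (in which case additivity of $\gdim$ in $\mod\cR$ gives the dimension-vector identity for free and no additivity of $\Hom$ is needed), or fall back on the paper's indirect argument.
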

\begin{proof} 
By Theorem \ref{nakajima category} 
the category $\gpr \cS$ is  equivalent to the category of $2$-cyclic complexes of projective $\C Q$--modules. The objects $\sigma(S_i)^{\wedge}$ and 
$\sigma(\Sigma S_i)^{\wedge}$ 
correspond under this equivalence 
to $P_i \stackrel{\id}  \rightarrow P_i$ and $P_i \stackrel{\id}  \leftarrow P_i$ respectively.   Recall that $\sigma(S_i)^{\wedge}$ is bistable and $K_{LR} \sigma(S_i)^{\wedge} \cong \sigma(S_i)^{\wedge}$ by Lemma \ref{phi}. Hence using Lemma \ref{lemma:stable}, we know that $\sigma(S_i)^{\wedge}$ lies in the stratum with dimension vector $\gdim \sigma(S_i)^{\wedge}$. 
Hence we have that $$\sigma(S_i)^{\wedge}(S_j)= \Hom(\sigma(S_j)^{\wedge}, \sigma(S_i)^{\wedge}) \cong
 \Hom(P_j, P_i)= \sigma(S_i)^{\wedge}(\Sigma S_j)$$ for all $i, j$. This proves the first part of the statement. Now by \cite{Qin14}
 $$L(v_i, w_i) \cdot L(v_{i_1}, w_{i_1}) \cdots  L(v_{i_n}, w_{i_n}) =L( \sum v_{i_k}, \sum w_{i_k})$$ and as $\sigma(x)^{\wedge}$ can be obtained by iterated extensions of the $M_i$, we have that
 $v\ge \sum v_{i_k}$ and $w=\sum w_{i_k}$, where $(v,w)$ is the dimension vector of $K_{LR} \sigma(x)^{\wedge}=\sigma(x)^{\wedge}$ in $\mod \cR$. We know by Lemma \ref{transversal} that $(v,w)$ vanishes under $CK$ and that the same holds for $(\sum_k v_{i_k}, w)$. Now as the Cartan matrix is injective when restricted to a dimension vector of an indecomposable object in $\gpr \cS$, we obtain by Theorem of \cite{Scherotzke} that $\sum v_{i_k}=v$. Hence by \cite{VargnoloVasserot03}, we have that $L(v_i, w_i) \cdot L(v_{i_1}, w_{i_1}) \cdots  L(v_{i_n}, w_{i_n})=L(\sigma(x)^{\wedge}) $. 
This finishes the proof.
\end{proof} 

\begin{theorem}\label{iso}
The pullback induces an isomorphism of algebras 
$$\cS \ch_{\gpr \cS}
\cong 
\cS \ch_{\mod \cS}.
$$ 
Further under this isomorphism the quantum group generators $E_i$, $F_i$ are mapped to each other. 
\end{theorem}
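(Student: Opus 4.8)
The plan is to build the isomorphism by comparing the two subalgebras through the pullback functor $i^{*}$ along the open embedding $i: \gpr\cS\hookrightarrow\mod\cS$, and to check it on the explicit generators. First I would record that, by Lemma \ref{compatibility}, $i^{*}$ is a morphism of coalgebras $K^{gr}(\mod\cS)\to K^{gr}(\gpr\cS)$, hence its dual $(i^{*})^{\vee}$ is an algebra homomorphism from $K^{gr*}(\gpr\cS)$-side to the $\mod\cS$-side in the appropriate direction; since $i$ is an open immersion, $i^{*}$ carries IC sheaves to IC sheaves (those on strata of $\mod\cS$ meeting $\gpr_{w}\cS$ restrict to the IC sheaf of the corresponding stratum of $\gpr_{w}\cS$, and the others restrict to $0$). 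This gives a well-defined algebra map on the localized rings once we check the localizing sets correspond: by Corollary \ref{transversal generator}, the strata of $\sigma(S_i)^{\wedge}$ and $\sigma(\Sigma S_i)^{\wedge}$ are transversal to $\cm^{reg}_0(v_i,w_i)$ and $\cm^{reg}_0(v_i',w_i)$, so the classes $L(\sigma(S_i)^{\wedge})$, $L(\sigma(\Sigma S_i)^{\wedge})$ and $L(v_i,w_i)$, $L(v_i',w_i)$ generate the same multiplicative sets (up to the transversality scalars $t^{d(\cdots)}$), and inverting one is inverting the other.

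Next I would match the generators. On the $\mod\cS$ side Qin's embedding (Theorem \ref{qin}) sends $E_i\mapsto L(S_{\sigma S_i})$, $F_i\mapsto L(S_{\sigma\Sigma S_i})$, $H_i\mapsto L(v_i,w_i)$, $H_i'\mapsto L(v_i',w_i)$. On the $\gpr\cS$ side the generators of $\cS\ch_{\gpr\cS}$ are the $L(E_i),L(F_i),L(K_i),L(K_i^{-1})$ written in terms of $L(S_i^{\wedge})$, $L(\Sigma S_i^{\wedge})$, $L(\sigma(S_j)^{\wedge})$, $L(\sigma(\Sigma S_j)^{\wedge})$. The key computation is that under $i^{*}$ (or rather its dual) these correspond. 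For the $K_i$-part this is exactly Lemma \ref{cartan}: the filtration of $\sigma(S_i)^{\wedge}$ by the $M_{i_k}$ gives $L(v_{i_1},w_{i_1})\cdots L(v_{i_n},w_{i_n})=L(\sigma(S_i)^{\wedge})$, i.e. the product of Qin's $H$-type generators over a composition series of $P_i$ equals $L(\sigma(S_i)^{\wedge})=L(K_i)$; the dual statement holds for $\sigma(\Sigma S_i)^{\wedge}=L(K_i^{-1})$. For the $E_i,F_i$ part I would use Lemma \ref{generators}: the non-split sequence $0\to P\to\res S_i^{\wedge}\to S_{\sigma(\Sigma^{-1}S_i)}\to 0$ together with the product formula $L(v,w)L(v',w')=\sum_{v''\ge v+v'}a_{v''}L(v'',w+w')$ (with leading term the transversality scalar) shows that multiplying $L(S_i^{\wedge})$ by the appropriate product of $L(\sigma(S_j)^{\wedge})^{-a_{ij}}$ — which is exactly the monomial appearing in the definition of $L(E_i)$ — kills the projective summand $P$ and leaves, up to a power of $t$ and the $(q-1)$-scalar, precisely $L(S_{\sigma\Sigma^{-1}S_i})$, matching Qin's $E_i$ after reindexing by the shift. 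The analogous computation with $\Sigma S_i^{\wedge}$ and $S_{\sigma S_i}$ handles $F_i$.

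Having matched generators, I would conclude that $(i^{*})^{\vee}$ restricts to a surjection $\cS\ch_{\gpr\cS}\to\cS\ch_{\mod\cS}$ onto the image of $\tilde U_t(\cg)[H_i,H_i']^{-1}$, and in fact that it preserves $E_i$ and $F_i$ on the nose. Injectivity I would get from the fact that both algebras are quotients/subalgebras of the same quantum group: $\cS\ch_{\gpr\cS}$ recovers $U_t(\cg)$ after twisting via the chain Bridgeland--Theorem \ref{nakajima category}, while $\cS\ch_{\mod\cS}$ is by definition the image of (the localized) $\tilde U_t(\cg)$ which maps onto $U_t(\cg)$; so a surjective algebra map between two copies of (a localization of) $U_t(\cg)$ that is the identity on the Chevalley generators $E_i,F_i$ and on the Cartan part $K_i^{\pm}$ is forced to be an isomorphism.

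The main obstacle I expect is the generator-matching computation on the $E_i,F_i$ part: one must verify that the IC class $L(\res S_i^{\wedge})$ on the $\gpr\cS$ stratum, after multiplying by the Cartan-monomial of $K_i$'s, genuinely equals the IC class $L(S_{\sigma(\Sigma^{-1}S_i)})$ on the corresponding minimal stratum, not just up to a sum of lower terms. This requires controlling the full product formula — i.e. showing the "error terms" $a_{v''}L(v'',\cdots)$ for $v''>v+v'$ vanish in the relevant degrees — which is where the transversality statements (Corollary \ref{transversal generator}, Lemma \ref{transversal}) and the injectivity of the Cartan matrix on dimension vectors of indecomposables in $\gpr\cS$ (used already in Lemma \ref{cartan}) must be combined carefully; there is also a bookkeeping subtlety in tracking the shift $\Sigma^{-1}$ appearing in Lemma \ref{generators} against Qin's indexing and the powers of $t$ coming from the $d(\cdot)$-coefficients, which one must reconcile with the twist in $K^{gr*}(\mod\cS)^{tw}$.
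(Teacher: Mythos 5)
Your overall route is the one the paper takes: dualize the coalgebra morphism $i^{*}$ of Lemma \ref{compatibility} to get an algebra map $\phi$, match the Cartan generators via the multiplicative identity of Lemma \ref{cartan} (using that the $P_i$ form a basis of the Grothendieck group of $\mod \C Q$ to pass between the two localizing sets), and match $E_i$, $F_i$ via the short exact sequence of Lemma \ref{generators} together with the product formula $L(v,w)L(v',w')=\sum_{v''\ge v+v'}a_{v''}L(v'',w+w')$. The ``error terms'' you flag as the main obstacle are handled exactly as you suspect: the paper observes that all strata $\cm_0^{reg}(v'',w)$ with $v''>v$ are empty for the relevant $(v,w)=\gdim K_{LR}(\res S_i^{\wedge})$, so the product collapses to a single term $a_iL(\res S_i^{\wedge})$ with $a_i$ a power of $t$.

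The one genuine flaw is your injectivity argument. You propose to deduce injectivity from the claim that $\cS\ch_{\gpr\cS}$ already recovers $U_t(\cg)$ via the chain Bridgeland--Theorem \ref{nakajima category}. But at this stage the map $L\colon SH_q\to K^{gr *}(\gpr\cS)[L(\sigma(S_i)^{\wedge}),L(\sigma(\Sigma S_i)^{\wedge})]^{-1}$ is only a linear map; its compatibility with the products, and hence the identification of $\cS\ch_{\gpr\cS}$ with the quantum group after twisting, is explicitly stated in the paper to be a \emph{consequence} of Theorem \ref{iso}, so using it as an input is circular. The fix is already implicit in your first paragraph: $i^{*}$ is a \emph{surjective} coalgebra homomorphism $K^{gr}(\mod\cS)\to K^{gr}(\gpr\cS)$ (it sends each IC basis element to an IC basis element or to zero, and every IC sheaf on the $\gpr$ side arises this way), so its $\Z[t,t^{-1}]$-dual $\phi$ is injective on all of $K^{gr *}(\gpr\cS)$ and remains so after localization; injectivity on the subalgebra is then automatic, with no appeal to the quantum group. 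A smaller point: transversality of the strata (Corollary \ref{transversal generator}) does not by itself show that the two localizing sets generate the same multiplicative set --- the actual mechanism is the multiplicative relation of Lemma \ref{cartan} combined with the invertibility of the Cartan matrix, which you do invoke correctly later in the generator-matching step.
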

\begin{proof}
By Lemma \ref{compatibility}, the embedding of $\gpr \cS$ into $\mod \cS$ induces a surjective co-algebra homomorphism $$K^{gr}(\mod \cS) \to K^{gr}(\gpr \cS), \cl(v,w) \mapsto i^* \cl(v,w)$$ and hence dualizing yields an injective algebra homomorphism 
$$\phi: K^{gr *}(\gpr \cS) \to K^{gr *}(\mod \cS), L(v,w) \mapsto L(v,w).$$ Furthermore, we have that $L(\sigma(x)^{\wedge})$ is by Lemma \ref{generators} a product of objects $L(v_i, w_i)$ and 
$L(\sigma(\Sigma(x))$ is a product of the associated objects $L( v'_i, w_i)$. Hence $L(\sigma(S_i)^{\wedge})$ and $ L(\sigma(\Sigma S_i)^{\wedge} )$ are invertible in 
$$K^{gr *}(\mod \cS)[L(v_i, w_i), L(v_i', w_i)]^{-1}.$$ Thus, after localization, we get an algebra monomorphism 
$$K^{gr *}(\gpr \cS)[L(\sigma(S_i)) , L(\sigma(\Sigma S_i))]^{-1}  \to K^{gr *}(\mod \cS)[L(v_i, w_i), L(v_i', w_i)]^{-1}.$$ Furthermore, as the indecomposable projective $\C Q$--modules $P_i$ form a basis of the Grothendieck ring of $\mod \C Q$, we know by Lemma \ref{cartan} that the 
subalgebra generated by the elements $L(\sigma(S_i))^{\pm} , L(\sigma(\Sigma S_i))^{\pm}$ is isomorphic to the subalgebra generated by the elements $L(v_i, w_i)^{\pm}, L(v_i', w_i)^{\pm}$. 
By Lemma \ref{generators}, there is a non-split exact sequence in $\mod \cS$ given by 
$$ 0\to P \to \res  S_i ^{\wedge}  \to S_{\sigma(\Sigma^{-1} S_i)} \to 0 .$$ 
Furthermore, we have that 
$$\gdim K_{LR} ( \res  S_i ^{\wedge} )=\gdim K_{LR} P + \gdim K_{LR} S_{\sigma(\Sigma^{-1} S_i)}=:(v,w)$$ combining \ref{transversal} and Proposition 4.6 of \cite{Scherotzke}. Now all strata $\cm_0^{reg}(v',w)$ such that $v' > v$ are empty. We conclude that
 $$L( S_{\sigma(\Sigma^{-1} S_i)})L(P) = a_i L(\res  S_i ^{\wedge} )$$ for some coefficient $a_i$ which is a power of $t$ and therefore 
 $$L( S_{\sigma(\Sigma^{-1} S_i)})  = a_i L(\res S_i ^{\wedge} )   L(P)^{-1}.$$  
Hence all generators of the algebra considered by Qin are indeed in the image of $\phi$.
Therefore we find that the IC-sheaves associated to the isoclasses of modules used by Bridgeland are indeed isomorphic under $\phi$ to the IC-sheaves generating the subalgebra of $K^{gr *}(\mod \cS)$ constructed by Qin. 
\end{proof}

\def\cprime{$'$} \def\cprime{$'$}
\providecommand{\bysame}{\leavevmode\hbox to3em{\hrulefill}\thinspace}
\providecommand{\MR}{\relax\ifhmode\unskip\space\fi MR }
\providecommand{\MRhref}[2]{%
  \href{http://www.ams.org/mathscinet-getitem?mr=#1}{#2}
}
\providecommand{\href}[2]{#2}


\begin{thebibliography}{10}

\bibitem{Bridgeland13} Tom Bridgeland, \emph{Quantum groups via Hall algebras of complexes}
Annals of Maths. \textbf{177} (2013),  739--759. 


\bibitem{BuanMarshReitenReineckeTodorov} Aslak  Buan,  Robert Marsh, Idun Reiten, Markus Reineke and Todorov, Gordana,
\emph{ Clusters and seeds in acyclic cluster algebras},
Proc. Amer. Math. Soc. 135 (2007) 3049--3060. 

\bibitem{ChenDeng} Chen and Deng, \emph{Cyclic complexes, Hall polynomials and simple Lie algebras,} available at 
http://math.bnu.edu.cn/~dengbm/preprint/cyc-cpx-rev5.pdf.


\bibitem{Gorsky} Mikhail Gorsky, \emph{Semi-derived Hall algebras and tilting invariance of
Bridgeland-Hall algebras}, (2013), arXiv:1303.5879.


\bibitem{Guo}  Jin Yun Guo and Liangang Peng, \emph{Universal PBW-Basis of Hall-Ringel Algebras and Hall Polynomials}, 
Journal of Algebra 98, 339--351 (1997).

\bibitem{Happel87}
Dieter Happel, \emph{On the derived category of a finite-dimensional algebra},
  Comment. Math. Helv. \textbf{62} (1987), no.~3, 339--389.

\bibitem{Happel88}
\bysame, \emph{Triangulated categories in the representation theory of
  finite-dimensional algebras}, Cambridge University Press, Cambridge, 1988.

\bibitem{HernandezLeclerc10}
David Hernandez and Bernard Leclerc, \emph{Cluster algebras and quantum affine algebras}, Duke Math. J.
  \textbf{154} (2010), no.~2, 265--341.
  
  
 \bibitem{HernandezLeclerc11}
\bysame, \emph{Quantum {G}rothendieck rings and derived {H}all algebras}, Crelle's Journal 701 (2015), 77--126.


\bibitem{Hubery} Andrew Hubery, \emph{From triangulated categories to Lie algebras: A theorem of Peng and Xiao},
Proceedings of the Workshop on Representation Theory of Algebras and related Topics
(Queretaro, 2004), editors J. De la Pena and R. Bautista, math.RT/0502403.


  
\bibitem{Kashiwara91} Masaki Kashiwara, \emph{On crystal bases of the $Q$--analogue of universal enveloping algebras}, 
  Duke Math. J. 63 (1991), no. 2, 465--516. 

\bibitem{Kaplansky58}
Irvin Kaplansky, \emph{Projective modules}, Annals of Math. \textbf{68} (1958),
  372--377.
 
 
 \bibitem{Keller05}
\bysame, \emph{On triangulated orbit categories}, Documenta Math. 10 (2005), 551-581.
  
  \bibitem{KellerScherotzke13a}
Bernhard Keller and Sarah Scherotzke, \emph{Graded quiver varieties and derived categories}, 
J. reine angew. Math.,DOI 10.1515 / crelle-2013-0124.

\bibitem{KellerScherotzke13b}
\bysame, \emph{Desingularizations of quiver Grassmannians via graded quiver varieties},
 Advances in Mathematics 256 (2014) 318-347. 


\bibitem{LeclercPlamondon12}
Bernard Leclerc and Pierre-Guy Plamondon, \emph{Nakajima varieties and
  repetitive algebras}, Publications of RIMS, Volume 49, Issue 3, 2013, pp. 531--561.
  
\bibitem{Lusztig90} Georg Lusztig, \emph{Canonical bases arising from quantized enveloping algebras}, J.Amer.Math.Soc.3(1990), 447--498.

 \bibitem{Lusztig91}
Georg Lusztig, \emph{Quivers, perverse sheaves and enveloping algebras}, J.Amer.Math.Soc.4(1991), 365--421.

\bibitem{Lusztig98a}
\bysame, \emph{On quiver varieties}, Adv. Math. \textbf{136} (1998), no.~1,
  141--182.




\bibitem{Najera} Alfredo N{\'a}jera Ch{\'a}vez, \emph{A 2-Calabi-Yau realization of universal cluster algebras of finite type}, in preparation.   
  

  
 \bibitem{Nakajima94}
Hiraku Nakajima,  \emph{Instantons on ALE spaces, quiver varieties, and Kac-Moody algebras}, Duke Math. 76 (1994) 365--416. 

\bibitem{Nakajima98}
\bysame, \emph{Quiver Varieties and Kac-Moody Algebras}, Duke Math., 91, (1998), 515--560.


\bibitem{Nakajima01}
\bysame, \emph{Quiver varieties and finite-dimensional representations of
  quantum affine algebras}, J. Amer. Math. Soc. \textbf{14} (2001), no.~1,
  145--238 (electronic).




  
 \bibitem{Qin14}
Fan Qin, \emph{Quantum groups via cyclic quiver varieties I},  http://arxiv.org/abs/1312.1101.
  



\bibitem{Reineke06} Markus Reineke, \emph{Counting rational points of quiver moduli}
Int. Math. Res. Not. (2006).  


\bibitem{Ringel84}
Claus~Michael Ringel, \emph{Tame algebras and integral quadratic forms},
  Lecture Notes in Mathematics, vol. 1099, Springer Verlag, 1984.
  
\bibitem{Ringel90}
Claus~Michael Ringel, \emph{Hall algebras and quantum groups},
  Invent. Math. 101 (1990), no. 1, 583--591.
  
\bibitem{Ringel90a}
Claus~Michael Ringel, \emph{Hall polynomials for the representation-finite hereditary algebras}
  Advances in Mathematics 84 (1990), no.2, 137--178.



\bibitem{Scherotzke} Sarah Scherotzke, \emph{Quiver varieties and self-injective algebras}, http://arxiv.org/abs/1405.4729. 

\bibitem{Schiffmann1} Olivier Schiffmann, \emph{Lectures on Hall algebras}, http://arxiv.org/abs/math/0611617.

\bibitem{Schiffmann2} Olivier Schiffmann, \emph{Lectures on canonical and crystal bases of Hall algebras}, http://arxiv.org/abs/0910.4460. 


\bibitem{VargnoloVasserot03} Michela Varagnolo and Eric Vasserot, \emph{Perverse sheaves and quantum Grothendieck rings}, Studies in memory of Issai Schur
(Chevaleret/Rehovot, 2000), Progr. Math., vol. 210, Birkhauser
Boston, Boston, MA, 2003, pp. 345365. 


\end{thebibliography}
\end{document}